\documentclass{article}
\usepackage{graphicx} 
\usepackage{amsthm}
\usepackage{amsfonts, amsmath, amssymb, amsthm}
\usepackage{cleveref}
\usepackage[utf8x]{inputenc}
\usepackage{url}

\title{$\Pi^0_4$ conservation of Ramsey's theorem for pairs}
\date{June 2023}

\newtheorem*{statement}{Statement}
\newtheorem{theorem}{Theorem}
\numberwithin{theorem}{section}

\newtheorem{lemma}[theorem]{Lemma}
\newtheorem{question}[theorem]{Question}
\newtheorem{proposition}[theorem]{Proposition}
\newtheorem{remark}[theorem]{Remark}
\newtheorem{definition}[theorem]{Definition}
\newtheorem{corollary}[theorem]{Corollary}

\newcommand{\Param}{P}
\newcommand{\param}{p}

\makeatletter
\newtheorem*{rep@theorem}{\rep@title}
\newcommand{\newreptheorem}[2]{%
\newenvironment{rep#1}[1]{%
 \def\rep@title{#2 \ref{##1}}%
 \begin{rep@theorem}}%
 {\end{rep@theorem}}}
\makeatother

\newreptheorem{theorem}{Theorem}

\usepackage{xcolor}

\newcommand{\RCA}{\mathsf{RCA}}
\newcommand{\WKL}{\mathsf{WKL}}
\newcommand{\EM}{\mathsf{EM}}

\newcommand{\ADS}{\mathsf{ADS}}

\newcommand{\COH}{\mathsf{COH}}


\newcommand{\BSig}{\mathsf{B}\Sigma^0}
\newcommand{\ISig}{\mathsf{I}\Sigma^0}

\newcommand{\IDel}{\mathsf{I}\Delta^0}

\newcommand{\WF}{\mathsf{WF}}
\newcommand{\RT}{\mathsf{RT}}

\newcommand{\GP}{\mathsf{GP}}

\newcommand{\FGP}{\mathsf{FGP}}
\newcommand{\pFGP}[2]{(#1, #2)\mbox{-}\FGP}

\renewcommand{\L}{\mathcal{L}}
\newcommand{\C}{\mathcal{C}}
\newcommand{\R}{\mathcal{R}}

\renewcommand{\S}{\mathcal{S}}

\newcommand{\M}{\mathcal{M}}
\newcommand{\Nc}{\mathcal{N}}

\newcommand{\NN}{\mathbb{N}}

\newcommand{\uh}[0]{{\upharpoonright}}

\newcommand{\card}{\operatorname{card}}
\newcommand{\finsub}{\subseteq_{\mathtt{fin}}}

\usepackage{authblk}
\DeclareSymbolFont{bbold}{U}{bbold}{m}{n}
\DeclareMathSymbol{\bbomega}{\mathord}{bbold}{"7F}

\author{Quentin Le Hou\'erou \and Ludovic Levy Patey \and Keita Yokoyama}

\def\qt#1{``#1''}%

\begin{document}

\maketitle

\begin{abstract}
In this article\footnote{A first version of this article~\cite{houerou2026conservation} was published at the Journal of the London Mathematical Society. The authors proved the closure of largeness$(T)$ under Ramsey's theorem for pairs ($\RT^2_2$) by decomposing it into the Erd\H{o}s-Moser theorem ($\EM$) and the Ascending Descending Sequence principle $(\ADS)$. However, the proof of closure under $\ADS$ (\cite[Proposition 5.4]{houerou2026conservation}) was flawed, affecting \cite[Corollary 5.5]{houerou2026conservation}. The authors then published a corrigendum in which they proved directly \cite[Corollary 5.5]{houerou2026conservation} with different explicit bounds. The current article is a fixed version.}, we prove that Ramsey's theorem for pairs and two colors is a $\forall \Pi^0_4$ conservative extension of~$\RCA_0 + \BSig_2$, where a $\forall \Pi^0_4$ formula consists of a universal quantifier over sets followed by a $\Pi^0_4$ formula. The proof is an improvement of a result by Patey and Yokoyama~\cite{patey2018proof} and a step towards the resolution of the longstanding question of the first-order part of Ramsey's theorem for pairs.
For this, we introduce a new general technique for proving $\Pi^0_4$-conservation theorems. \bigskip

\textbf{MSC classes} Primary: 03B30, 03F30 Secondary: 05D10
\end{abstract}

\section{Introduction}

Among the theorems studied in Reverse Mathematics, Ramsey's theorem for pairs plays a significant role, as it escapes the structural phenomenon of the so-called Big Five. The study of its $\omega$-models yielded many longstanding open problems, and each of them motivated the discovery of new techniques in Computability Theory~\cite{seetapun1995strength,cholak2001strength,liu2012rt22,monin2021srt22}. See Hirschfeldt~\cite{hirschfeldt2015slicing} for a gentle introduction to the Reverse Mathematics of Ramsey's theorem. 

Given a set~$X \subseteq \NN$, we write $[X]^n$ for the set of unordered $n$-tuples over~$X$. A set~$H \subseteq \NN$ is \emph{homogeneous} for a coloring $f : [\NN]^n \to k$ if $f$ is constant over~$[H]^n$.

\begin{statement}[Ramsey's theorem for $n$-tuples and $k$ colors]
$\RT^n_k$ : For every coloring $f : [\NN]^n \to k$, there is an infinite $f$-homogeneous set.
\end{statement}

From a proof-theoretic perspective, Ramsey's theorem for pairs also raised many challenging open questions, among which the characterization of its first-order part.
The \emph{first-order part} of a theorem $T$ of second-order arithmetic is the set of its first-order consequences, that is, the sentences in the language $\mathcal{L}_{\mathsf{PA}}$ which are provable by~$T$. $\RT^2_2$ is known to imply the collection principle for $\Sigma^0_2$ formulas ($\BSig_2$) over~$\RCA_0$ (see Hirst~\cite{hirst1987combinatorics}) and to be $\Pi^1_1$ conservative over~$\RCA_0 + \ISig_2$ (see Cholak, Jockusch and Slaman~\cite{cholak2001strength}). On the other hand, Chong, Slaman and Yang~\cite{chong2017inductive} proved that $\RT^2_2$ does not imply $\ISig_2$ over~$\RCA_0$. The following question is arguably the most important open question of the reverse mathematics of Ramsey's theorem:

\begin{question}\label[question]{ques:rt22-pi11-conservation}
Is $\RT^2_2$ $\Pi^1_1$ conservative over~$\RCA_0 + \BSig_2$?
\end{question}

The reader can refer to \cite{kolo2021search} for a history of the quest for the first-order part of Ramsey's theorem for pairs.
Thanks to an isomorphism theorem for weak K\"onig's lemma ($\WKL_0$), Fiori-Carones, Ko{\l}odziejczyk, Wong and Yokoyama~\cite{fiori2021isomorphism} proved that in order to prove $\Pi^1_1$ conservation of $\RT^2_2$ over~$\RCA_0 + \BSig_2$, it is sufficient to study only a fixed level in the arithmetic hierarchy.

\begin{theorem}[Fiori-Carones et al.~\cite{fiori2021isomorphism}]
$\RT^2_2$ is $\Pi^1_1$ conservative over~$\RCA_0 + \BSig_2$
iff it is $\forall \Pi^0_5$ conservative over~$\RCA_0 + \BSig_2$.
\end{theorem}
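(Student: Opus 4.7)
The forward direction is trivial since every $\forall \Pi^0_5$ sentence is $\Pi^1_1$. For the converse, I would use the isomorphism theorem of Fiori-Carones et al.\ as a black box: for every countable $M \models \BSig_2$, any two second-order parts making $M$ into a model of $\WKL_0 + \BSig_2$ are isomorphic over $M$. The consequence I exploit is that for every $\Pi^1_1$ sentence $\phi$ there is a $\forall \Pi^0_5$ sentence $\sigma_\phi$ such that, for every countable $M \models \BSig_2$,
\[
M \models \sigma_\phi \iff \text{the (unique, up to isomorphism) } \WKL_0 + \BSig_2 \text{ extension of } M \text{ models } \phi.
\]
The existence of such a $\sigma_\phi$ should come from a careful complexity count in the construction behind the isomorphism theorem.

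Granting this, assume $\RT^2_2$ is $\forall \Pi^0_5$ conservative and $\RCA_0 + \RT^2_2 \vdash \phi$ for some $\phi \in \Pi^1_1$; I want $\RCA_0 + \BSig_2 \vdash \phi$. First, I claim $\RCA_0 + \RT^2_2 \vdash \sigma_\phi$. Let $(N, \mathcal{V}) \models \RCA_0 + \RT^2_2$ be countable; since $\RT^2_2$ implies $\BSig_2$, $N \models \BSig_2$. Using that $\WKL_0$ can be added on top of $\RT^2_2$ while preserving the first-order part (standard via low-basis and $\Pi^1_1$-conservation arguments), extend $\mathcal{V}$ to $\mathcal{W} \supseteq \mathcal{V}$ with $(N, \mathcal{W}) \models \RCA_0 + \RT^2_2 + \WKL_0 + \BSig_2$. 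Then $(N, \mathcal{W}) \models \phi$ by hypothesis, and the defining property of $\sigma_\phi$ gives $N \models \sigma_\phi$. So every first-order part of $\RCA_0 + \RT^2_2$ models $\sigma_\phi$, hence $\RCA_0 + \RT^2_2 \vdash \sigma_\phi$.

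By the $\forall \Pi^0_5$ conservation hypothesis, $\RCA_0 + \BSig_2 \vdash \sigma_\phi$. Running the equivalence in the other direction: every countable $M \models \BSig_2$ admits a $\WKL_0 + \BSig_2$ extension that satisfies $\phi$, so $\RCA_0 + \BSig_2 + \WKL_0 \vdash \phi$. Since $\WKL_0$ is $\Pi^1_1$-conservative over $\RCA_0 + \BSig_2$ and $\phi \in \Pi^1_1$, this descends to $\RCA_0 + \BSig_2 \vdash \phi$, as required.

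The main obstacle is producing the $\forall \Pi^0_5$ sentence $\sigma_\phi$ with the required equivalence: one needs to turn the second-order existentials appearing in $\neg\phi$ into first-order quantifiers over indices of their definitions inside the canonical $\WKL_0 + \BSig_2$ extension of $M$, while keeping the arithmetic complexity of satisfaction controlled. A sloppy count would land at $\forall \Pi^0_6$ or higher; the technical content of the isomorphism theorem of Fiori-Carones et al.\ is a uniform, $\BSig_2$-effective construction of the isomorphism that tightens this count to exactly~$\Pi^0_5$.
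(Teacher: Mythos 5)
The paper does not give a proof of this theorem; it cites it from Fiori-Carones, Ko{\l}odziejczyk, Wong and Yokoyama~\cite{fiori2021isomorphism}, so there is no in-paper argument to compare against. Your forward direction is fine, and using the isomorphism theorem as the engine for the converse is indeed the right idea. However, there is a concrete gap in how you invoke it.

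You state the isomorphism theorem as applying to \emph{every} countable $M \models \BSig_2$: "any two second-order parts making $M$ into a model of $\WKL_0 + \BSig_2$ are isomorphic over $M$." This is false in general. If $M \models \ISig_2$, then $M$ can have many non-isomorphic $\WKL_0$-closures (e.g., ones containing sets of different Turing degree, which yields genuinely non-isomorphic structures even over nonstandard $M$). The actual isomorphism theorem of Fiori-Carones et al.\ requires $M \models \BSig_2 + \neg\ISig_2$; it is a "without primitive recursion" phenomenon. Consequently, the defining property of your $\sigma_\phi$ ("the (unique, up to isomorphism) $\WKL_0 + \BSig_2$ extension of $M$ models $\phi$") is ill-posed for models of $\ISig_2$, and the biconditional you need in both directions (from $\RCA_0 + \RT^2_2 \vdash \sigma_\phi$, and back from $\RCA_0 + \BSig_2 \vdash \sigma_\phi$) does not go through uniformly. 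The correct argument splits into two cases: when $M \models \neg\ISig_2$, one runs essentially your reduction; when $M \models \ISig_2$, one appeals instead to the Cholak--Jockusch--Slaman theorem that $\RT^2_2$ (indeed $\WKL_0 + \RT^2_2$) is already $\Pi^1_1$ conservative over $\RCA_0 + \ISig_2$, so those models need no reduction to a $\forall\Pi^0_5$ sentence at all. Without this case split, your argument does not establish $\Pi^1_1$ conservation.

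A secondary point: your step showing $\RCA_0 + \RT^2_2 \vdash \sigma_\phi$ asserts that any countable $(N, \mathcal{V}) \models \RCA_0 + \RT^2_2$ extends, with the same first-order part, to a model of $\WKL_0 + \RT^2_2 + \BSig_2$. This is true but not immediate; it requires an iterated forcing/basis-theorem argument (alternating $\WKL$-paths and $\RT^2_2$-solutions while controlling induction), which you should at least flag. A cleaner route is to work with $\WKL_0 + \RT^2_2$ from the outset and use the amalgamation theorem (\Cref{thm:amalgamation}) together with the $\Pi^1_1$ conservativity of $\WKL_0$ to transfer between $\RT^2_2$ and $\WKL_0 + \RT^2_2$ on either side of the conservation claim, which avoids the need to enlarge a given $\RT^2_2$-model in the middle of the argument.

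Finally, you correctly identify the construction of $\sigma_\phi$ with a $\forall\Pi^0_5$ complexity bound as the technical heart; that part is genuinely outsourced to~\cite{fiori2021isomorphism} and is not something one should expect to rederive from first principles.
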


Here, a formula is $\forall \Pi^0_n$ if it is of the form $\forall X \varphi(X)$ where~$\varphi$ is a $\Pi^0_n$ formula.
Patey and Yokoyama~\cite{patey2018proof} proved the following theorem:

\begin{theorem}[Patey and Yokoyama~\cite{patey2018proof}]
$\RT^2_2$ is $\forall \Pi^0_3$ conservative over~$\RCA_0$. 
\end{theorem}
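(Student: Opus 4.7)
The plan is to prove conservation semantically: take any countable model $M \models \RCA_0$ in which some $\forall \Pi^0_3$ consequence of $\RCA_0 + \RT^2_2$ fails, and build an extension $M' \supseteq M$ still satisfying $\RCA_0 + \RT^2_2$ in which the same failure persists. Writing the $\forall \Pi^0_3$ formula as $\forall X \forall a \exists b \forall c\, \theta(X,a,b,c)$ with $\theta \in \Delta^0_0$, failure in $M$ gives fixed $X, a$ witnessing the $\Pi^0_2$ statement $\forall b \exists c\, \neg\theta(X,a,b,c)$. The problem then reduces to a preservation theorem: over any countable $\RCA_0$-model, one can add solutions to every instance of $\RT^2_2$ without introducing any new $\Sigma^0_2$ truth over a distinguished parameter.

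I would decompose $\RT^2_2$ via Cholak, Jockusch, and Slaman as $\COH + \mathsf{SRT}^2_2$, and treat the two components by Mathias-style forcing arguments. For the cohesive step, a standard Mathias forcing with computable reservoirs produces a cohesive set whose relative jump is controlled well enough not to disturb fixed $\Pi^0_2$ truths. For the stable step, given a stable $f \colon [\NN]^2 \to 2$, one of the two $\Delta^0_2$ limit sets $L_i = \{x : \lim_s f(x,s) = i\}$ is infinite, and a homogeneous set must be carved out of $L_i$ by Mathias forcing whose reservoirs are large in a carefully chosen finitary sense.

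The heart of the argument is a finitary Ramsey-type lemma with calibrated largeness, in the spirit of Ketonen--Solovay $\alpha$-largeness or a variant tailored to the situation: one proves that for every sufficiently large finite set $F$ in this sense and every coloring of $[F]^2$, there exists a homogeneous subset that is still large. A bookkeeping iteration over the countably many instances of $\RT^2_2$ present in $M$ then assembles the full extension $M'$, with $\Sigma^0_2$-preservation at each stage propagating through the $\omega$-iterated forcing.

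The main obstacle is calibrating the largeness hierarchy so that two opposing demands are met simultaneously: it must be strong enough that the finite Ramsey-type inequalities iterate successfully through both the $\COH$ and stable steps, yet weak enough that its existence is provable in the base theory $\RCA_0$, which is essential if the semantic argument is to translate back into $\RCA_0$-provability of every $\Pi^0_3$ consequence. Showing that $\Sigma^0_2$-preservation survives the $\omega$-iteration in a model that need not satisfy $\ISig_2$ is the delicate technical point and would occupy the bulk of the proof.
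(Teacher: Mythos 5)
There is a genuine gap, and it lies at the very first step of your plan rather than in the technical details you flag at the end. You propose to fix a countable $M \models \RCA_0$ witnessing the failure of a $\forall\Pi^0_3$ sentence and then to enlarge only the second-order part by an $\omega$-iterated Mathias forcing until $\RT^2_2$ holds. But $\RT^2_2$ (and already $\mathsf{SRT}^2_2$, via your $\COH + \mathsf{SRT}^2_2$ decomposition) proves $\BSig_2$ over $\RCA_0$, and $\BSig_2$ is a first-order scheme not provable in $\RCA_0$. If $M \models \RCA_0 + \neg\BSig_2$, then \emph{no} extension of $M$ with the same first-order universe can satisfy $\RT^2_2$, so the iteration cannot even begin for an arbitrary ground model. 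One can repair this particular point by first passing to a model of $\BSig_2$ (the $\forall\Pi^0_3$ consequences of $\RCA_0$ and $\RCA_0+\BSig_2$ coincide), but then your plan collapses into proving that the CJS-style iteration can be carried out over $\RCA_0 + \BSig_2$ while preserving the theory in the limit --- and that is precisely the open $\Pi^1_1$-conservation problem (\Cref{ques:rt22-pi11-conservation}), not a ``delicate technical point'' to be absorbed into the verification. The known iteration arguments need $\ISig_2$ in the ground model. Note also that in a pure second-order extension your ``$\Sigma^0_2$-preservation'' is vacuous (arithmetic facts about fixed parameters cannot change), which signals that the preservation requirement is not where the difficulty actually lives.

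The actual proof goes in the opposite direction: instead of growing the second-order part over a fixed first-order universe, one \emph{shrinks} the first-order universe to a proper cut. Following Kirby and Paris, one starts from a countable nonstandard model of $\ISig_1$ containing a finite set that is $c$-dense for a nonstandard $c$ (an indicator tailored to $\RT^2_2$, to $\WKL$, and to the $\Pi^0_2$ counterexample, as in \Cref{def:density}), and builds a semi-regular initial segment $I$ so that $(I, \mathsf{Cod}(M/I))$ models $\WKL_0 + \RT^2_2$ while the counterexample survives; the density condition is what guarantees that witnesses for the preserved $\Pi^0_2$ statement fall below the cut. The combinatorial heart is then showing in $\RCA_0$ that $n$-dense sets exist for each standard $n$, which is done one application of $\RT^2_2$ at a time via Ketonen--Solovay $\omega^n$-largeness, a generalized Parsons theorem, the grouping principle, and the decomposition $\RT^2_2 \leftrightarrow \ADS \wedge \EM$ --- not $\COH + \mathsf{SRT}^2_2$, whose stable half reintroduces exactly the $\Delta^0_2$/collection issues you would need to avoid. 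Your instinct that a calibrated finitary largeness lemma is the heart of the matter is right, but it must be deployed inside the indicator framework, not inside an iterated forcing over a fixed first-order part.
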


Note that the $\forall \Pi^0_3$ consequences of~$\RCA_0$ and $\RCA_0 + \BSig_2$ coincide, by a parameterized version of the Parsons, Paris and Friedman conservation theorem (see~\cite{kaye1991models} or \cite{buss1998handbook}).
In this article, we make one further step towards the characterization of the first-order part of Ramsey's theorem for pairs, by proving the following theorem:

\begin{theorem}[Main theorem]\label[theorem]{thm:rt22-pi04-conservation}
$\WKL_0 + \RT^2_2$ is $\forall \Pi^0_4$ conservative over~$\RCA_0 + \BSig_2$.
\end{theorem}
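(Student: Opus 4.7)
The plan is to adapt the indicator-based strategy of Patey and Yokoyama~\cite{patey2018proof}, refined to handle one further quantifier alternation. Assume for contradiction that $\WKL_0 + \RT^2_2 \vdash \forall X\, \varphi(X)$ for some $\Pi^0_4$-formula $\varphi$, while $\RCA_0 + \BSig_2 \nvdash \forall X\, \varphi(X)$. Fix a countable nonstandard $(M,S) \models \RCA_0 + \BSig_2$ and a set $A \in S$ witnessing $\neg \varphi(A)$. Writing $\neg \varphi(A)$ as $\exists a\,\forall b\,\exists c\,\forall d\,\theta(A,a,b,c,d)$ with $\theta$ bounded, fix a witness $a_0 \in M$; in $(M,S)$ there is then a $\Delta^0_2$-definable Skolem function $g : b \mapsto c$ satisfying $\forall d\,\theta(A,a_0,b,g(b),d)$ for every $b$. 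The aim is to construct an initial segment $I \subseteq M$ closed under $g$, together with $\widetilde S \supseteq \{X \cap I : X \in S\}$, such that $(I,\widetilde S) \models \WKL_0 + \RT^2_2$. The closure of $I$ under $g$ ensures that the $\Pi^0_3$ sentence $\forall b\,\exists c\,\forall d\,\theta(A \cap I, a_0, b, c, d)$ persists in the cut, producing the required countermodel.

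The $\WKL_0$ part can be handled essentially by a low basis theorem argument interleaved with the $\RT^2_2$ construction, so the heart of the proof is producing $I$ and $\widetilde S$ solving $\RT^2_2$. Following the template of~\cite{patey2018proof}, I would design an indicator $Y : [M]^2 \to M$ whose nonstandard values at $(x,y)$ allow extraction of a cut $I$ with $x < I < y$ on which one can find an infinite homogeneous set for a prescribed coloring in $\widetilde S$ while simultaneously closing under $g$. The indicator is grounded in a finitary, bounded form of $\RT^2_2$ equipped with an iterated largeness notion, in the style of the grouping principles $\GP$, $\SGP$, $\FGP$ named in the preamble: the largeness must be strong enough that, once overspill yields a nonstandard value of $Y$, one can iterate $\omega$ many times, at each stage adjoining a homogeneous set for a new coloring, keeping the current cut closed under $g$, and preserving enough largeness for the next stage.

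The novelty compared to the $\forall \Pi^0_3$ result of~\cite{patey2018proof} is the extra universal quantifier $\forall c$ inside $\varphi$, which raises the Skolem function $g$ one level in the arithmetic hierarchy, from $\Delta^0_1$ (as in the $\Pi^0_3$ case) to $\Delta^0_2$. Cuts closed under such functions behave well only in models of $\BSig_2$, which is precisely where the assumption on the base theory enters. Correspondingly, the finitary grouping principle underlying the indicator must be refined so that, for every input coloring, it produces a homogeneous set whose largeness admits one additional level of iteration, matching the new quantifier alternation.

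The main obstacle is to prove this refined finitary grouping principle in a fragment of arithmetic whose nonstandard models satisfy $\RCA_0 + \BSig_2$: the quantitative bounds must be tight enough to be provable there, yet loose enough that overspill produces a nonstandard value of the indicator. Once this is in place, the iterative extension of $\widetilde S$ to a model of $\RT^2_2$, the closure of the cut under $g$, and the final application of the low basis theorem for $\WKL_0$ should follow as technical but relatively routine elaborations of the Patey--Yokoyama machinery.
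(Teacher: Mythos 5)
Your high-level architecture matches the paper's: reduce $\forall\Pi^0_4$ conservation to preserving a residual $\Pi^0_3$ sentence $T \equiv \forall x\exists y\forall z\,\theta$ in a cut of a nonstandard model, build that cut via an indicator grounded in a finitary grouping principle, and use $\WKL$-style arguments for the second-order part. But the proposal stops exactly where the new mathematics begins, and the step you defer as "routine elaboration" is in fact the step that fails under a naive adaptation of \cite{patey2018proof}. The problem is the interaction between largeness and closure under your $\Delta^0_2$ Skolem function $g$: at each stage of the cut construction you hold a finite $\omega^k$-large set $Z$ and must pass to a large subset lying entirely above $g(\min Z)$ (so that the cut ends up closed under $g$), while retaining enough largeness for the remaining stages. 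Ordinary $\omega^k$-largeness gives no such guarantee --- nothing prevents all but one of the blocks of $Z$ from sitting below $g(\min Z)$. The paper's solution is to change the largeness notion itself: $\omega^n$-largeness$(T)$ (\Cref{defi:largeness-rca0-bsig2-t}) requires the blocks in the decomposition to be pairwise \emph{$T$-apart}, which is precisely the property that lets one discard an initial block and land above the needed witnesses. This forces a re-proof of the entire combinatorial toolkit (the pigeonhole lemma now costs a factor of $2$ in the exponent rather than $+1$, see \Cref{lem:largeness-rt1k}), and a new generalized Parsons theorem for $\WKL_0 + \BSig_2 + T$ (\Cref{thm:generalized-parsons-p4-bsig2-largeness}) whose cut construction interleaves semi-regularity, $\RT^1$, and $T$-apartness stages. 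None of this is present in, or derivable from, your sketch.

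The second missing ingredient is the conservation strength of the grouping principle. In \cite{patey2018proof} only $\Pi^0_3$ conservation of $\GP^2_2$ over $\RCA_0$ was needed and available; here one needs it to hold over $\RCA_0 + \BSig_2$ at the $\Pi^1_1$ level (\Cref{th:gp22-wkl-pi11-conservation}), which requires a genuinely new forcing argument controlling the jump of the generic grouping (\Cref{prop:gp22-low}) so that the extended model still satisfies $\BSig_2$. Your phrase "tight enough to be provable there, yet loose enough that overspill produces a nonstandard value" names the tension but does not resolve it. Finally, note that the paper does not attack $\RT^2_2$ head-on as you propose: it splits $\RT^2_2$ into $\ADS \wedge \EM$, imports the known $\Pi^1_1$ conservativity of $\ADS$ over $\RCA_0 + \BSig_2$, and applies the amalgamation theorem (\Cref{thm:amalgamation}), so that the hard largeness$(T)$ analysis is only carried out for $\EM$. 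A direct treatment of $\RT^2_2$ is possible (the paper's explicit bounds for $\ADS$ yield one), but your proposal would still need all of the machinery above to get off the ground.
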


The proof of the main theorem follows the structure of Patey and Yokoyama~\cite{patey2018proof},
using an indicator for $\forall \Pi^0_4$ conservation defined by Yokoyama~\cite[Section 4]{yokoyama2013notes}.
It is based on a variant of the notion of $\alpha$-largeness, which provides a new general technique for proving $\forall \Pi^0_4$-conservation theorems.

As in \cite{patey2018proof}, in order to avoid confusion between the theory and the meta-theory, we will use $\omega$ to denote the set of (standard) natural numbers, $\NN$ to denote the sets of natural numbers inside the system and $\bbomega$ for the ordinal $\omega$ in the system.

\subsection{Quantitative largeness and $\Pi^0_3$ sentences}

A family of finite sets of natural numbers $\L \subseteq [\NN]^{<\NN}$ is said 
to be a \textit{largeness notion} if any infinite set has a finite subset in $\L$
and $\L$ is closed under supersets.
Ketonen and Solovay~\cite{ketonen1981rapidly} defined a quantitative notion of largeness, called $\bbomega^n$-largeness (see \Cref{defi:largeness-rca0}), to measure the size of sets necessary to satisfy $\Sigma^0_1$ formulas which $\WKL_0$-provably hold over infinite sets. More precisely, the following theorem holds:

\begin{theorem}[Generalized Parsons theorem for~$\WKL_0$~\cite{patey2018proof}]\label[theorem]{thm:generalized-parsons-rca0}
Let~$\psi(x, y, F)$ be a $\Sigma_0$ formula with exactly the displayed free variables. Assume that 
$$
\WKL_0 \vdash \forall x \forall X(X \mbox{ is infinite } \rightarrow \exists F \finsub X \exists y \psi(x, y, F))
$$
Then there exists some~$n \in \omega$ such that $\ISig_1$ proves $\forall x \forall Z \finsub (x, \infty)$
$$
Z \mbox{ is } \bbomega^n\mbox{-large} \rightarrow \exists F \subseteq Z \exists y < \max Z \psi(x, y, F)
$$
\end{theorem}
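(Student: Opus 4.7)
The strategy is indicator-style: I argue by contraposition, combining compactness to produce a pathological nonstandard model with a cut construction inside it yielding a model of $\WKL_0$, and then derive a contradiction via $\Sigma_0$-absoluteness.

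\textbf{Setup by compactness.} Suppose the conclusion fails, so for every $n \in \omega$ there are some $x_n$ and an $\omega^n$-large $Z_n \finsub (x_n, \infty)$ admitting no $F \subseteq Z_n$ and $y < \max Z_n$ satisfying $\psi(x_n, y, F)$. In the language of arithmetic augmented with a numerical constant $a$ and a set constant $\dot Z$, consider the theory extending $\ISig_1$ by $\dot Z \finsub (a, \infty)$, the scheme ``$\dot Z$ is $\omega^n$-large'' for each standard $n$, and the single axiom $\forall F \subseteq \dot Z \; \forall y < \max \dot Z \; \neg \psi(a, y, F)$. The assumed counterexamples make this theory finitely satisfiable, so by compactness it admits a countable model $M$; by overspill, $M$ regards the interpretation $Z$ of $\dot Z$ as $\omega^\nu$-large for some nonstandard $\nu \in M$.

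\textbf{Indicator and cut construction.} Using the Ketonen--Solovay machinery, the $\omega^\nu$-largeness of $Z$ for nonstandard $\nu$ allows me to extract a proper initial segment $I \subsetneq M$ with $a \in I$, together with a family $\S$ of $M$-coded subsets of $I$, such that $(I, \S) \models \WKL_0$ and $Z \cap I \in \S$ is unbounded in $I$. The combinatorial heart is that one can iteratively thin the trace of $Z$ to handle each instance of $\BSig_1$-collection and each infinite binary tree demanding a path, while maintaining cofinality of the thinned set; the largeness budget $\omega^\nu$ is what permits the countably many such refinements needed inside~$M$.

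\textbf{Pullback to contradiction.} Inside $(I, \S) \models \WKL_0$, the set $Z \cap I$ is infinite, so the hypothesis yields $F \finsub Z \cap I$ and $y \in I$ with $(I, \S) \models \psi(a, y, F)$. Since $\psi$ is $\Sigma_0$ and $I$ is an initial segment of $M$, $\Sigma_0$-absoluteness gives $M \models \psi(a, y, F)$; moreover $F \subseteq Z$ and $y < \max(Z \cap I) \le \max Z$, so $(F, y)$ witnesses the existential statement in $M$, contradicting the defining property of~$M$.

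The principal obstacle is the cut construction: establishing that $\omega^n$-largeness for sufficiently large (nonstandard) $n$ is a genuine indicator for $\WKL_0$, and not merely for $\ISig_1$. The compactness setup and the $\Sigma_0$-absoluteness pullback are routine once such an indicator lemma is in place.
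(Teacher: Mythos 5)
Your proposal is correct and follows essentially the same route as the paper (and the Patey--Yokoyama proof it cites): contraposition plus compactness and overspill to get a nonstandard model containing an $\omega^\nu$-large counterexample set for nonstandard $\nu$, a semi-regular-cut construction inside that set yielding $(I,\mathsf{Cod}(M/I))\models\WKL_0$ with $Z\cap I$ unbounded in $I$, and a $\Sigma_0$-absoluteness pullback; this is exactly the structure the paper uses for the harder $\WKL_0+\BSig_2+T$ version in \Cref{prop:initial-segment-wkl-bsig2-t-largeness} and \Cref{thm:generalized-parsons-p4-bsig2-largeness}. The one ingredient you black-box — that nonstandard $\omega^\nu$-largeness suffices to build the cut — is indeed the crux, and the standard mechanism is to ensure \emph{semi-regularity} of $I$ by repeatedly passing to one of the $\min Z_s$ many $\omega^{\nu-s-1}$-large blocks avoiding a given small $M$-finite set, rather than handling trees directly.
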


The generalized Parsons theorem for $\WKL_0$ plays a key role in conservation results, as it provides quantitative finitary counterparts to infinitary theorems. See Patey and Yokoyama~\cite{patey2018proof} for a proof of \Cref{thm:generalized-parsons-rca0}.

For the purpose of $\forall \Pi^0_4$ conservation over~$\WKL_0 + \BSig_2$, we shall define a new quantitative notion of largeness, called $\bbomega^n$-largeness$(T)$ (see \Cref{defi:largeness-rca0-bsig2-t}), where $T$ is any fixed $\Pi^0_3$ sentence, and prove the following generalized Parsons theorem:

\begin{theorem}[Generalized Parsons theorem for $\WKL_0 + \BSig_2 + T$]\label[theorem]{thm:generalized-parsons-p4-bsig2-largeness}
Let~$\psi(x, y, F)$ be a $\Sigma_0$ formula with exactly the displayed free variables. Assume that 
$$
\WKL_0 + \BSig_2 + T \vdash \forall x \forall X(X \mbox{ is infinite } \rightarrow \exists F \finsub X \exists y \psi(x, y, F))
$$
Then there exists some~$n \in \omega$ such that $\ISig_1$ proves $\forall x \forall Z \finsub (x, \infty)$
$$
Z \mbox{ is } \bbomega^n\mbox{-large}(T) \mbox{ and exp-sparse } \rightarrow \exists F \subseteq Z \exists y < \max Z \psi(x, y, F)
$$
\end{theorem}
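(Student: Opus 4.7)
The plan is to mirror the proof of \Cref{thm:generalized-parsons-rca0}, but enlarge the finitary largeness notion so that it simultaneously encodes finitary witnesses for $\BSig_2$ and for the fixed $\Pi^0_3$ sentence $T$. Writing $T$ as $\forall u\,\exists v\,\forall w\,\theta(u,v,w)$ with $\theta$ bounded, the notion $\omega^n$-largeness$(T)$ (presumably given in \Cref{defi:largeness-rca0-bsig2-t}) should demand both Ketonen--Solovay $\omega^n$-largeness \emph{and} the existence of Skolem-like witnesses $v$ for every relevant $u$, verified for $w$ up to a threshold controlled by the set itself. The role of the additional exp-sparseness hypothesis is to provide geometric room between successive elements so that these extra witnesses can be verified locally.

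I would argue the contrapositive. Assume no such $n$ works: for each $n \in \omega$, there exist $x_n$ and an exp-sparse $\omega^n$-large$(T)$ set $Z_n \finsub (x_n,\infty)$ witnessing the failure of the conclusion. By compactness / overspill one passes to a countable nonstandard model $\M \models \RCA_0 + \BSig_2$ containing a coded $Z \finsub (x,\infty)$ that is exp-sparse and $\omega^e$-large$(T)$ for some nonstandard~$e$, and that admits no $F\subseteq Z$ and $y<\max Z$ with $\psi(x,y,F)$. The goal is then to build, inside $\M$, an infinite $X \subseteq Z$ together with an expansion of the model to a structure satisfying $\WKL_0 + \BSig_2 + T$ in which the corresponding $\Sigma^0_1$ witness is still missing, contradicting the hypothesis.

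Producing this expansion is where the design of $\omega^n$-largeness$(T)$ earns its keep. The Ketonen--Solovay component of the largeness handles the $\WKL_0$ fragment essentially as in \cite{patey2018proof}. The Skolem witnesses baked into the $(T)$-component yield validity of $T$ on the resulting infinite set, with exp-sparseness ensuring that the inner $\forall w$ check extends correctly through the relevant cut because the gaps are wide enough to cover all $w$ below the next chosen element. Finally, exp-sparseness together with $\omega^n$-largeness$(T)$ for sufficiently large nonstandard~$n$ should induce $\BSig_2$ on the corresponding initial segment, via the standard observation that sufficiently regular large sets bound $\Sigma^0_2$-definable functions.

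The main obstacle I anticipate is calibrating these three demands into a single finitary notion without circularity. Individually, $\BSig_2$ and the $\Pi^0_3$ sentence $T$ can each be absorbed into a largeness notion; the difficulty is that Skolem witnessing for $T$ may itself require $\Sigma^0_2$-collection to fit inside the ambient set, while $\BSig_2$ on the induced cut in turn needs enough regularity on $Z$ to be meaningful. Tuning the growth rate of the exp-sparseness hypothesis against the $\omega^n$ hierarchy, and ensuring that the finitary $T$-witness threshold is compatible with the thresholds required by the $\WKL_0$ and $\BSig_2$ parts, is where the quantitative bookkeeping will be most delicate.
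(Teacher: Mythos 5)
Your overall strategy is the right one and matches the paper's: argue the contrapositive, use overspill to get a single countable nonstandard model of $\ISig_1$ containing an exp-sparse $\omega^{2^c}$-large$(T)$ counterexample $Z$ for nonstandard $c$, and then build a Kirby--Paris-style initial segment $I$ in which $Z$ remains unbounded and which, together with $\mathsf{Cod}(M/I)$, models $\WKL_0 + \BSig_2 + T$. Your guess at the content of $\omega^n$-largeness$(T)$ is also essentially correct: the paper's \Cref{defi:largeness-rca0-bsig2-t} requires the blocks of the inductive decomposition to be pairwise $T$-apart, i.e.\ $\forall x < \max X\,\exists y < \min Y\,\forall z < \max Y\,\theta(x,y,z)$, which is your ``Skolem witnesses verified up to a threshold controlled by the set.''

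The gap is that the entire technical content lives in the step you defer. The paper's \Cref{prop:initial-segment-wkl-bsig2-t-largeness} builds a nested sequence $Z = Z_0 \supseteq Z_1 \supseteq \dots$ with $Z_i$ still $\omega^{2^{c-i}}$-large$(T)$, cycling through four requirement types (avoiding small coded sets for semi-regularity, homogenizing coded colorings for $\RT^1$, dropping minima for unboundedness, and passing to the second of two $T$-apart blocks for $T$), and sets $I = \sup_i \min Z_i$. Two of your specific attributions would not survive this construction. First, $\BSig_2$ on the cut is not obtained from ``sufficiently regular large sets bound $\Sigma^0_2$-definable functions''; it is obtained by forcing $\RT^1$ in $\mathsf{Cod}(M/I)$, which requires a pigeonhole lemma for largeness$(T)$ (\Cref{lem:largeness-rt1k}): an $\omega^{2b}$-large$(T)$ \emph{exp-sparse} set admits an $\omega^b$-large$(T)$ homogeneous subset for any coloring into $\min X$ colors. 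This lemma is nontrivial precisely because the homogeneous blocks must remain $T$-apart, and it is the \emph{only} place exp-sparseness is consumed. Second, the verification of $T$ on the cut does not rely on exp-sparseness at all: it works because consecutive blocks in the decomposition are $T$-apart and $\max Z_{4i+4}$ lies above $I$, so the bounded check $\forall z < \max Z_{4i+4}\,\theta$ already covers all of $I$. Without the pigeonhole lemma and the explicit bookkeeping showing that each stage costs only one level of the $2^c$ budget, the ``calibration'' you flag as an obstacle is not a minor tuning issue but the proof itself.
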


In \Cref{sect:framework-pi04}, we shall develop a framework for $\forall \Pi^0_4$ conservation over~$\BSig_2$, and will in particular relate through \Cref{thm:largeness-to-conservation} the notion of $\forall \Pi^0_4$ conservation to provability over~$\RCA_0 + \BSig_2 + T$, for a $\Pi^0_3$ sentence~$T$.

\subsection{Prerequisites}\label[section]{sect:prerequisites}

Along the paper, we shall resort to some well-known conservation theorems without explicitly stating them. 

\begin{statement}[Weak K\"onig's lemma]
$\WKL$: Every infinite binary tree admits an infinite path.
\end{statement}

To follow the standard reverse mathematical practice, we shall write $\WKL_0$ for the theory~$\RCA_0 + \WKL$.
Weak K\"onig's lemma plays a central role in Reverse Mathematics, as it captures compactness arguments.

\begin{theorem}[H\'ajek~\cite{hajek1993interpretability}]
$\WKL_0+\BSig_2$ is a $\Pi^1_1$-conservative extension of $\RCA_0 + \BSig_2$.
\end{theorem}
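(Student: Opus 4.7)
The plan is to proceed by a model-theoretic extension argument. Let $\psi \equiv \forall X\, \varphi(X)$ be a $\Pi^1_1$ sentence not provable in $\RCA_0 + \BSig_2$, so one can fix a countable model $(M, \mathcal{S}) \models \RCA_0 + \BSig_2 + \neg \psi$ and some $X_0 \in \mathcal{S}$ with $(M, \mathcal{S}) \models \neg \varphi(X_0)$. Since the truth value of an arithmetic formula at a fixed parameter is preserved when the second-order part is enlarged, it suffices to construct $\mathcal{S}' \supseteq \mathcal{S}$ with $(M, \mathcal{S}') \models \WKL_0 + \BSig_2$; then $(M, \mathcal{S}')$ refutes $\psi$ at $X_0$, so $\WKL_0 + \BSig_2 \nvdash \psi$.

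I construct $\mathcal{S}' = \bigcup_n \mathcal{S}_n$ in $\omega$ many stages, with $\mathcal{S}_0 = \mathcal{S}$. At stage $n$ a bookkeeping enumeration selects an infinite binary tree $T_n \in \mathcal{S}_n$ (from the point of view of $(M, \mathcal{S}_n)$), and I adjoin a path $P_n$ chosen by a relativized Jockusch--Soare Low Basis Theorem executed inside $M$: $P_n$ is low over $Y_n$, where $Y_n$ codes all data added so far, in the sense that $M$ contains a set $J_n \leq_T Y_n'$ correctly representing the jump $(Y_n \oplus P_n)'$. Closing under Turing reducibility at each stage maintains $\RCA_0$, and dovetailing over all trees ensures $(M, \mathcal{S}') \models \WKL$.

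The heart of the argument is the preservation of $\BSig_2$. Any $\Sigma^0_2$ formula evaluated in $(M, \mathcal{S}')$ only uses finitely many set parameters, all contained in some $\mathcal{S}_n$; iterating the lowness clauses of the construction, for every $\Sigma^0_2$ formula $\chi(x,y,\vec{Z})$ with $\vec{Z} \in \mathcal{S}_n$ one obtains an $M$-equivalent $\Sigma^0_2$ formula $\chi'(x,y)$ whose parameters lie in $\mathcal{S}_0 = \mathcal{S}$. An instance of $\BSig_2$ for $\chi$ in $(M, \mathcal{S}')$ is therefore transferred to an instance of $\BSig_2$ for $\chi'$ in $(M, \mathcal{S})$, whose witnessing bound, being an element of $M$, also bounds the original instance.

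The main obstacle is verifying that the Jockusch--Soare priority construction really goes through in a model of $\RCA_0 + \BSig_2$ alone, without $\ISig_2$, and that the externally obtained code $J_n$ is sharp enough to yield the internal formula-level reduction $\chi \equiv \chi'$ used above. Concretely, one must check that the classical low-basis construction can be formalized with only $\Sigma^0_1$-induction together with $\Sigma^0_2$-collection, and that $J_n$ correctly decides, for every $M$-coded pair (index, input), whether the corresponding $\Sigma^0_2$-formula over $Y_n \oplus P_n$ holds. Once this technical step is settled, the $\BSig_2$-transfer of the previous paragraph applies uniformly at every stage, so the limit structure $(M, \mathcal{S}')$ models $\WKL_0 + \BSig_2$ and still refutes $\psi$, as required.
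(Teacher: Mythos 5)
The paper does not prove this theorem; it is quoted as a known prerequisite, citing H\'ajek. So there is no internal proof to compare against, and your proposal must be judged against the standard argument --- which is exactly the one you outline: take a countable model of $\RCA_0 + \BSig_2 + \neg\psi$, iterate a $\BSig_2$-preserving low basis theorem to adjoin paths through all infinite binary trees, take the union, and use absoluteness of arithmetic formulas under $\omega$-extensions to preserve $\neg\varphi(X_0)$. That scaffolding is correct.

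The issue is that the one non-routine ingredient --- that for a countable model of $\ISig_1 + \BSig_2$ and an infinite binary tree $T$ coded in it, there is a path $P$ such that the model extended by $P$ still satisfies $\ISig_1 + \BSig_2$, witnessed by a jump code $J \leq_T Y'$ living inside $M$ --- is precisely the mathematical content of H\'ajek's theorem, and you explicitly defer it (``one must check\dots Once this technical step is settled\dots''). As written, your argument reduces the theorem to its hardest lemma and stops. The lemma is true, and it is exactly what the paper itself invokes elsewhere (in the proof of \Cref{prop:gp22-low}, via \cite[Corollary I.3.10(3)]{hajek1998metamathematics}); establishing it requires rerunning the Jockusch--Soare construction with a $\Sigma^0_1$-definable sequence of subtrees and verifying that only $\ISig_1$ is needed to carry the finite-injury bookkeeping, plus a $\BSig_2$-transfer using the internal jump code. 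Without that, the proposal is an accurate roadmap rather than a proof. A smaller remark: your mechanism for preserving $\BSig_2$ in the limit is more elaborate than necessary. Since a $\Sigma^0_2$ formula quantifies only over numbers, its truth at fixed parameters is absolute between $(M,\mathcal{S}_n)$ and $(M,\mathcal{S}')$; hence it suffices that each $(M,\mathcal{S}_n) \models \BSig_2$, which is maintained inductively by the low basis step. There is no need to translate parameters all the way back to $\mathcal{S}_0$, and doing so would in any case require the original model to be topped, which you have not arranged.
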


Ramsey's theorem for pairs admits a decomposition into two combinatorially simpler statements, namely, the Ascending Descending Sequence principle and the Erd\H{o}s-Moser theorem.

\begin{statement}[Ascending Descending Sequence]
$\ADS$: Every infinite linear order admits an infinite ascending or descending sequence.
\end{statement}

Given a coloring~$f : [\NN]^2 \to 2$, a set~$H \subseteq \NN$ is \emph{$f$-transitive} if for every~$\{x,y,z\} \in [H]^3$ with $x < y < z$ and every~$i < 2$, if $f(x,y) = f(y, z) = i$, then $f(x, z) = i$.

\begin{statement}[Erd\H{o}s-Moser theorem]
$\EM$: For every infinite coloring~$f : [\NN]^2 \to 2$, there is an infinite $f$-transitive subset.
\end{statement}

Ramsey's theorem for pairs and two colors is equivalent to $\ADS \wedge \EM$ (see Bovykin and Weiermann~\cite{bovykin2017strength}). 
Chong, Slaman and Yang~\cite[Corollary 4.4]{chong2021pi11} proved the following theorem, among other $\Pi^1_1$ conservations of combinatorial statements:

\begin{theorem}[Chong, Slaman and Yang~\cite{chong2021pi11}]\label[theorem]{thm:ads-pi11-conservation}
$\ADS$ is a $\Pi^1_1$ conservative extension of~$\RCA_0 + \BSig_2$. 
\end{theorem}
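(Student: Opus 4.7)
The plan is to prove this conservation model-theoretically. Suppose $\RCA_0 + \BSig_2 + \ADS \vdash \varphi$ for some $\Pi^1_1$ sentence $\varphi$; the goal is $\RCA_0 + \BSig_2 \vdash \varphi$. Working contrapositively, I would start from a countable $(M, \S) \models \RCA_0 + \BSig_2 + \neg\varphi$, witness $\neg\varphi$ by a $\Sigma^1_1$ formula $\exists X\,\theta(X, \vec A)$ with $\vec A \in \S$, and build a second-order extension $\S^* \supseteq \S$ such that $(M, \S^*) \models \RCA_0 + \BSig_2 + \ADS$ while $\exists X\,\theta(X, \vec A)$ still holds in $(M, \S^*)$. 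A standard bookkeeping enumerates all infinite linear orders appearing in the intermediate universes, and $\S^*$ is realized as the union of an $\omega$-chain $\S = \S_0 \subseteq \S_1 \subseteq \cdots$ in which, at each stage, an infinite ascending or descending sequence through the next enumerated linear order is adjoined, together with whatever $\Delta^0_1$-definable sets are needed to maintain closure under the axioms of $\RCA_0$.

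The crux is a one-step preservation lemma: for any countable $(M,\S) \models \RCA_0 + \BSig_2$, infinite linear order $L \in \S$, and $\Sigma^1_1$ sentence true in $(M, \S)$, there exists an infinite $L$-ascending or $L$-descending sequence $G$ such that $(M, \S[G]) \models \RCA_0 + \BSig_2$ and the $\Sigma^1_1$ sentence is still true. The natural approach uses a Mathias-style forcing whose conditions are pairs $(\sigma, R)$, where $\sigma$ is a finite $L$-ascending sequence, $R \in \S$ is infinite and consists of elements $L$-above $\max \sigma$, and $R$ is itself $L$-ascending (with a symmetric variant for descending). By the combinatorial core of $\ADS$, every reservoir splits into an ascending and a descending part, and one of the two must be infinite, so both flavours of conditions are simultaneously available and densely extensible to an infinite generic sequence.

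The main obstacle is preserving $\BSig_2$ when $\ISig_2$ fails in $M$. The classical low$_2$ preservation strategy for Mathias-type forcings relies on $\emptyset''$-approximations whose convergence analysis tacitly invokes $\Sigma^0_2$-induction, and hence breaks down in models where collection is strictly weaker than induction at that level. The remedy, due to Chong, Slaman and Yang, is to restrict to a sub-forcing whose reservoirs are uniformly $\Sigma^0_1$-definable with parameters in $\S$, and to show, by an internal argument that consumes only $\BSig_2$ and $\ISig_1$, that for each $\Sigma^0_2$ formula the set of conditions forcing a uniform bound on its potential witnesses is dense. Once this definability-plus-collection step is in place, a routine iteration yields $\S^*$ and completes the proof; the technical heart of the argument is precisely this bounding step, where the Chong–Slaman–Yang technique substitutes for the missing $\Sigma^0_2$-induction.
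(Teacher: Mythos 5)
The paper does not prove this theorem; it imports it directly from Chong, Slaman and Yang~\cite[Corollary 4.4]{chong2021pi11} and uses it as a black box (together with \Cref{thm:amalgamation}) to reduce the main theorem to the $\forall\Pi^0_4$ conservation of $\EM$. So there is no in-paper proof against which to compare your sketch; what follows is an assessment of the sketch on its own terms.

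Your high-level plan --- take a countable model of $\RCA_0+\BSig_2+\neg\varphi$, iterate a forcing that adjoins $\ADS$-solutions while preserving both $\BSig_2$ and the $\Sigma^1_1$ witness, and close off with a bookkeeping argument --- is the correct shape of such a conservation proof, and you correctly identify the genuine difficulty: the Cholak--Jockusch--Slaman low${}_2$-preservation machinery silently uses $\ISig_2$ in its convergence analysis, so it cannot be applied verbatim over $\BSig_2$ alone. That is exactly the obstacle Chong, Slaman and Yang had to overcome, and naming it is the right diagnosis. However, two points are imprecise enough to matter. First, the claim that ``every reservoir splits into an ascending and a descending part, and one of the two must be infinite'' is not literally true for an arbitrary infinite linear order on a reservoir --- there is no Dilworth/Erd\H{o}s--Szekeres decomposition of a linear order into an ascending chain plus a descending chain. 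The usual way to make the split available is either to first pass to a \emph{stable} order (via $\COH$, i.e.\ decompose $\ADS$ as $\COH + \SADS$, both of which are themselves $\Pi^1_1$-conservative over $\RCA_0+\BSig_2$) or to run a case analysis on whether an infinite set of elements has only finitely many $L$-predecessors (resp.\ successors) in the reservoir. Without one of these moves the forcing notion is not well-posed. Second, the $\BSig_2$-preservation step is the entire mathematical content of the theorem, and your sketch only gestures at it: ``restrict to a sub-forcing whose reservoirs are uniformly $\Sigma^0_1$-definable and show a density of bounding conditions.'' As stated this is a placeholder, not an argument; nothing in the sketch indicates how the density claim is established using only $\BSig_2+\ISig_1$, nor why $\Sigma^0_1$-definability of reservoirs is the right restriction. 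The remark following \Cref{th:gp22-wkl-pi11-conservation} in the present paper points to the actual mechanism used in~\cite{chong2021pi11}: a one-step ``$\ll_2$-basis''-type lemma that produces, over a topped model, a solution $G$ with $(Y\oplus G)'\le_T Y'$ together with $\BSig_2$ in $\M[G]$, which then iterates. Filling your outline in would require proving such a one-step lemma for $\ADS$ (or $\SADS$), and that proof is precisely where the work lies.
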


The following theorem was proven by Patey and Yokoyama~\cite{patey2018proof} and is a generalization of an amalgamation theorem for $\Pi^1_1$ conservation by Yokoyama~\cite{yokoyama2009conservativity}.

\begin{theorem}[Amalgamation~\cite{patey2018proof}]\label{thm:amalgamation}
Fix $n \ge 1$.
Let $T$ be a theory extending $\ISig_1$ which consists of sentences of the form $\forall X \exists Y \theta(X,Y)$ where $\theta$ is $\Pi^{0}_{n+2}$,
and let $\Gamma_{1}$ and $\Gamma_{2}$ be sentences of the same form as $T$.
If $T+\Gamma_{i}$ is a $\forall\Pi^{0}_{n+2}$-conservative extension of $T$ for $i=1,2$,
then, $T+\Gamma_{1}+\Gamma_{2}$ is a $\forall\Pi^{0}_{n+2}$-conservative extension of $T$.
\end{theorem}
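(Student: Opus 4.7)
My plan is a standard zig--zag model-theoretic construction. Suppose for contradiction that $T+\Gamma_{1}+\Gamma_{2}\vdash \varphi$ for some $\forall\Pi^{0}_{n+2}$ sentence $\varphi \equiv \forall X\,\theta(X)$ with $\theta\in\Pi^{0}_{n+2}$, while $T\nvdash \varphi$. Fix a countable $M_{0}\models T$ together with some $X_{0}\in M_{0}$ witnessing $M_{0}\models \neg\theta(X_{0})$.

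I would then construct a chain $M_{0}\subseteq M_{1}\subseteq M_{2}\subseteq \cdots$ of countable models of $T$, each embedded into the next as a $\Sigma^{0}_{n+2}$-elementary substructure (equivalently, preserving all $\Sigma^{0}_{n+2}$- and $\Pi^{0}_{n+2}$-facts with set parameters), arranged so that $M_{2k+1}\models \Gamma_{1}$ and $M_{2k+2}\models \Gamma_{2}$. By the Tarski--Vaught test for $\Sigma^{0}_{n+2}$-elementary chains, the union $M^{*}=\bigcup_{k}M_{k}$ is itself a $\Sigma^{0}_{n+2}$-elementary extension of every $M_{k}$. Since each axiom of $T$, $\Gamma_{1}$, and $\Gamma_{2}$ has the form $\forall X\,\exists Y\,\vartheta(X,Y)$ with $\vartheta\in\Pi^{0}_{n+2}$, and since any $X\in M^{*}$ already appears in some $M_{k}$ where an appropriate later stage of the correct parity supplies a $Y$ satisfying $\vartheta(X,Y)$, the $\Sigma^{0}_{n+2}$-elementarity of the chain transfers the $\Pi^{0}_{n+2}$-fact $\vartheta(X,Y)$ up to $M^{*}$; hence $M^{*}\models T+\Gamma_{1}+\Gamma_{2}$. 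Moreover, $\neg\theta(X_{0})$ is $\Sigma^{0}_{n+2}$ and persists along the chain, so $M^{*}\models \neg\theta(X_{0})$, contradicting $T+\Gamma_{1}+\Gamma_{2}\vdash \varphi$.

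The main work, and the expected main obstacle, lies in the single-step extension lemma: given a countable $M\models T$, produce a countable $\Sigma^{0}_{n+2}$-elementary extension $M'\supseteq M$ with $M'\models T+\Gamma_{i}$. By compactness this reduces to showing that $T+\Gamma_{i}$ is consistent with the $\Sigma^{0}_{n+2}$-elementary diagram of $M$, that is, with every $\Sigma^{0}_{n+2}$- or $\Pi^{0}_{n+2}$-sentence with set parameters from $M$ which holds in $M$. An inconsistency of a finite fragment, after universally closing the set parameters, would produce a consequence of $T+\Gamma_{i}$ failing in $M\models T$; the subtlety is that a mixed $\Sigma^{0}_{n+2}/\Pi^{0}_{n+2}$ conjunction is not \emph{a priori} of $\forall\Pi^{0}_{n+2}$ shape, so one must exploit the $\forall X\,\exists Y$ Skolem form of the axioms of $T$ and $\Gamma_{i}$ (and iterated applications of the conservation hypothesis) to absorb the $\Sigma^{0}_{n+2}$-conjuncts into existential parameters and recover the $\forall\Pi^{0}_{n+2}$ normal form required by the hypothesis, along the lines of \cite{yokoyama2009conservativity}.
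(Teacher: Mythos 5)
The paper cites this theorem from Patey--Yokoyama~\cite{patey2018proof} without giving a proof, so there is no proof in the paper to compare against; I will evaluate your proposal on its own terms. The chain-and-union framework is the right general shape, and the Tarski--Vaught chain lemma you invoke does what you need once the chain exists. You also correctly identify that the single-step extension lemma is where all the work lies. However, the difficulty you flag there is actually the easy part, and the real obstruction --- which your ``absorption'' remark does not address --- is one quantifier level higher. Any $\Sigma^{0}_{n+2}$ fact $\exists x\,\pi(x,\ldots)$ in the diagram of $M$ is implied by the $\Pi^{0}_{n+1}$ fact $\pi(a,\ldots)$ for some witness $a\in M$, so one can pass freely to the $\Pi^{0}_{n+2}$-diagram and still get full $\Sigma^{0}_{n+2}$-elementarity of any model of it containing $M$. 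The mixed $\Sigma/\Pi$ conjunction problem therefore dissolves.

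The genuine gap is the following: if $T+\Gamma_{i}+\Pi^{0}_{n+2}\text{-diag}(M)$ is inconsistent, then by compactness $T+\Gamma_{i}\vdash\neg\delta(\bar C)$ for some finite $\Pi^{0}_{n+2}$ conjunction $\delta$ with parameters $\bar C$ from $M$, hence $T+\Gamma_{i}\vdash\forall\bar X\,\neg\delta(\bar X)$. But $\neg\delta\in\Sigma^{0}_{n+2}$, so this is a $\forall\Sigma^{0}_{n+2}$ sentence, which (after coding the number parameters into a set) is $\forall\Pi^{0}_{n+3}$, not $\forall\Pi^{0}_{n+2}$. Since $\Sigma^{0}_{n+2}\not\subseteq\Pi^{0}_{n+2}$, the $\forall\Pi^{0}_{n+2}$-conservation hypothesis simply does not apply, and you cannot conclude $T\vdash\forall\bar X\,\neg\delta(\bar X)$ to contradict $M\models\delta(\bar C)$. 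Dropping down to the $\Pi^{0}_{n+1}$-diagram fixes the complexity count (then $\forall\bar X\,\neg\delta$ becomes $\forall\Sigma^{0}_{n+1}$, which is $\forall\Pi^{0}_{n+2}$), but then you lose exactly the $\Pi^{0}_{n+2}$ facts $\theta_{i}(X_{j},Y_{j})$ witnessing $\Gamma_{i}$, and those are what must survive to the union for $M^{*}\models\Gamma_{1}+\Gamma_{2}$ to hold. Noting that the axioms have the $\forall X\exists Y\,\Pi^{0}_{n+2}$ Skolem form and gesturing at ``iterated applications of the conservation hypothesis'' does not close this one-level gap; a different mechanism is needed to show the single-step extension exists, and that is precisely the nontrivial content of the Patey--Yokoyama proof that your proposal leaves unsupplied.
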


In particular, putting \Cref{thm:ads-pi11-conservation} and \Cref{thm:amalgamation} together, it is sufficient to prove that $\EM$ is a $\forall\Pi^0_4$ conservative extension of~$\RCA_0 + \BSig_2$ to prove our main theorem. \\

Every $\Sigma^0_1$ formula $\varphi(G)$ can be expressed in normal form as $\exists k \psi(G \uh_k)$, where $\psi(\sigma)$ is a monotonous $\Sigma^0_1$ formula, that is, if $\psi(\sigma)$ and $\sigma \preceq \tau$, then $\psi(\tau)$ holds. 
All over this paper, we shall use the following standard compactness argument:

\begin{lemma}[Folklore]\label[lemma]{lem:compactness-big2-wkl}
$\WKL_0 + \BSig_2$ proves that for every $\Sigma^0_1$ formula~$\varphi(G) \equiv \exists k \psi(G \uh k)$ in normal form such that $\forall A \varphi(A)$ holds, there exists some $N_1 \in \NN$ such that for every $\sigma \in 2^{N_1}$, $\psi(\sigma)$ holds. Moreover, for every $\Delta^0_2$ predicate~$A$, $\varphi(A)$ holds.
\end{lemma}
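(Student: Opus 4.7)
The plan is to combine a $\WKL$-based compactness argument for the main claim with bounded $\Delta^0_2$-comprehension via $\BSig_2$ for the moreover clause.

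For the first part, I would work with the tree $T = \{\sigma \in 2^{<\NN} : \neg\psi(\sigma)\}$. Since $\psi$ is a monotonous $\Sigma^0_1$ formula, $T$ is a $\Pi^0_1$ subtree of $2^{<\NN}$: closure under initial segments is precisely the contrapositive of monotonicity (if $\sigma \preceq \tau$ and $\psi(\sigma)$, then $\psi(\tau)$, so $\tau \in T$ forces $\sigma \in T$). If $T$ contained strings of arbitrarily large length, $\WKL$ would supply an infinite path $A$ with $\neg\psi(A \uh k)$ for every $k$, contradicting the hypothesis $\forall A \varphi(A)$. Hence the lengths of elements of $T$ are bounded by some $N_1$, so $\psi(\sigma)$ holds for every $\sigma \in 2^{N_1}$.

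For the moreover clause, given a $\Delta^0_2$ predicate $A$, it suffices to show that the initial segment $A \uh N_1$ exists as a finite set (coded as a binary string of length $N_1$), since then $\psi(A \uh N_1)$ follows from the first part and witnesses $\varphi(A)$ with $k = N_1$. Writing $A(x) \leftrightarrow \exists u \forall v \theta_0(x, u, v)$ and $\neg A(x) \leftrightarrow \exists u \forall v \theta_1(x, u, v)$ with $\theta_0, \theta_1 \in \Sigma_0$, the bounded sentence $\forall x < N_1 \exists u (\forall v \theta_0(x, u, v) \lor \forall v \theta_1(x, u, v))$ is $\Sigma^0_2$, and $\BSig_2$ delivers a uniform bound $Y$ on the witness $u$. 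Once $Y$ is in hand, for $x < N_1$ both $A(x) \leftrightarrow \exists u < Y \forall v \theta_0(x, u, v)$ and $\neg A(x) \leftrightarrow \exists u < Y \forall v \theta_1(x, u, v)$ are $\Pi^0_1$ in parameter $Y$, so $A \uh N_1$ is obtained by $\Delta^0_1$-comprehension.

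Both steps are standard; the only mildly delicate point is the formation of $A \uh N_1$, which is exactly the role played by $\BSig_2$ in the lemma's hypothesis and cannot be dispensed with in $\RCA_0$ alone.
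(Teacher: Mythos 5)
Your proof is correct and takes essentially the same route as the paper: the first part is word-for-word the paper's compactness argument via the tree of $\sigma$ with $\neg\psi(\sigma)$, and the second part unfolds, with a direct $\BSig_2$-bounding-then-$\Delta^0_1$-comprehension argument, the standard fact that $\BSig_2$ implies regularity of $\Delta^0_2$ sets, which the paper simply cites.
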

\begin{proof}
Suppose there is no such~$N_1$. Let~$T$ be the tree of all $\sigma \in 2^{<\NN}$ such that $\neg \psi(\sigma)$. By monotonicity of~$\psi$, $T$ is a tree, and by assumption, it is infinite. By $\WKL_0$, there is some~$A \in [T]$. Then for every~$k$, $\neg \psi(A \uh k)$, so $\neg \varphi(A)$, contradicting our assumption.
Let~$N_1$ witness the statement of the lemma and let $A$ be a $\Delta^0_2$ predicate. By $\BSig_2$, $A$ is regular, so $A \uh_{N_1}$ is coded. By choice of~$N_1$, $\psi(A \uh_{N_1})$ holds, so $\exists k \psi(A \uh_k)$ holds, hence $\varphi(A)$ holds.
\end{proof}

\subsection{Structure of this paper}

In \Cref{sect:largeness-t}, we define two quantitative notions of largeness. The first one, $\bbomega^n$-largeness, was defined by Ketonen and Solovay~\cite{ketonen1981rapidly} and admits a generalized Parsons theorem for provability over~$\WKL_0$. The second one, $\bbomega^n$-largeness$(T)$, is new, and plays the same role as $\bbomega^n$-largeness, but for provability over~$\WKL_0 + \BSig_2 + T$, where~$T$ is any fixed $\Pi^0_3$ sentence.

Then, in \Cref{sect:framework-pi04}, we develop a framework for $\forall \Pi^0_4$ conservation of $\RT$-like theorems. For this, given a $\RT$-like theorem~$\Gamma$, we first define in \Cref{sect:density-ramsey-like} a notion of $n$-density$(\Gamma)$, which informally asserts the existence of sufficiently large finite sets, such that $n$ consecutive applications of~$\Gamma$ yield a large set. Following techniques initially defined by Kirby and Paris~\cite{Kirby1977InitialSO}, we prove that $\RT^2_2$ is a $\forall \Pi^0_4$ conservative extension of $\RCA_0 + \BSig_2 + $ \qt{there exists $n$-dense sets for every~$n \in \omega$} (see \Cref{thm:ph-conservative}). $\forall \Pi^0_4$ conservation over~$\RCA_0 + \BSig_2$ is then reduced to whether $\RCA_0 + \BSig_2$ proves the existence of these $n$-dense$(\Gamma)$ sets, for every~$n \in \omega$. It is however difficult to directly prove the existence of $n$-dense$(\Gamma)$ sets, and we therefore resort in~\Cref{sect:largeness-ramsey-like} to the quantitative notion of largeness, $\bbomega^n$-largeness$(T)$, to prove this existence step by step, by handling one application of~$\Gamma$ at a time.

In \Cref{sect:grouping-principle}, we introduce the grouping principle, originally defined by Patey and Yokoyama~\cite{patey2018proof}, and prove that it is a $\Pi^1_1$ conservative extension of~$\RCA_0 + \BSig_2$. This principle was shown very useful to construct $\bbomega^{n+1}$-large sets from sequences of $\bbomega^n$-large sets after an application of $\RT^2_2$ in~\cite{patey2018proof}. Using the generalized Parsons theorem for $\WKL_0 + \BSig_2 + T$, we derive a finitary version of the grouping principle in~\Cref{sect:finite-grouping-principle}. This will be the actual version which will be used in the last section.
In \Cref{sect:applications}, we apply this framework to prove that $\EM$ and $\RT^2_2$ are $\forall \Pi^0_4$ conservative extensions of~$\RCA_0 + \BSig_2$.

In \Cref{sect:weak-formulas}, we give two proofs that $\RCA_0 + \BSig_2$ is a conservative extension of~$\RCA_0$ for a restricted class of $\forall \Pi^0_4$ formulas.

In \Cref{sect:conclusion}, we prove the tightness of the upper bound of largeness for the pigeonhole principle.
Last, in \Cref{sect:open-questions}, we open the discussion to proof sizes and state some related open questions.

\section{Largeness}\label[section]{sect:largeness-t}

In this section, we define some quantitative notions of largeness both for~$\WKL_0$ and for~$\WKL_0 + \BSig_2 + T$, where $T$ is any $\Pi^0_3$ sentence. The first notion of largeness is originally due to Ketonen and Solovay~\cite{ketonen1981rapidly}, and is already well-understood. The second notion is new, so we conduct a systematic study of it, by proving some combinatorial lemmas, and a generalized Parsons theorem, which will often be used in the remainder of this article.

\subsection{Largeness for~$\WKL_0$}

Ketonen and Solovay~\cite{ketonen1981rapidly} defined a quantitative notion of largeness, called $\alpha$-largeness for every~$\alpha < \epsilon_0$, and proved a partition theorem for this notion. We give here an equivalent inductive definition in the restricted case where $\alpha$ is of the form $\bbomega^n \cdot k$ with~$n, k \in \NN$. This definition will serve as a basis for the quantitative notion of largeness for~$\WKL_0 + \BSig_2 + T$.

\begin{definition}[Largeness for~$\WKL_0$]\label[definition]{defi:largeness-rca0}
A set~$X \finsub \NN$ is 
\begin{itemize}
    \item \emph{$\bbomega^0$-large} if $X \neq \emptyset$. 
    \item \emph{$\bbomega^{(n+1)}$-large} if $X \setminus \min X$ is  $(\bbomega^n \cdot \min X)$-large
    \item \emph{$\bbomega^n \cdot k$-large} if
there are $k$ $\bbomega^n$-large subsets of~$X$
$$
X_0 < X_1 < \dots < X_{k-1}
$$
where $A < B$ means that for all $a \in A$ and $b \in B$, $a < b$.
\end{itemize}
\end{definition}

Following the convention of Ko\l odziejczyk and Yokoyama~\cite{kolo2020some}, we always assume that $\min X \geq 3$ to avoid some degenerate behavior when $\min X$ is too small.
The equivalence between \Cref{defi:largeness-rca0} and the original one is a consequence of~\cite[Theorem II.3.21]{hajek1998metamathematics}. This notion of largeness enjoys many desirable properties: $\RCA_0$ proves that $\bbomega^n$-largeness is a largeness notion for every~$n \in \omega$. Moreover, it satisfies the generalized Parsons theorem for~$\RCA_0$ (\Cref{thm:generalized-parsons-rca0}).

\begin{remark}
The definition above is also a notion of largeness for~$\WKL_0 + \BSig_2$, and \Cref{thm:generalized-parsons-rca0} also holds if one replaces provability over~$\WKL_0$ by provability over~$\WKL_0 + \BSig_2$, as $\WKL_0 + \BSig_2$ is a $\forall \Pi^0_3$ conservative extension of~$\WKL_0$ (see~\cite{kaye1991models} or \cite{buss1998handbook}).
\end{remark}

The main result of Ketonen and Solovay is the following partition theorem:

\begin{theorem}[Ketonen and Solovay~\cite{ketonen1981rapidly}]\label[theorem]{thm:ketonen-solovay}
Fix~$k \in \omega$. $\RCA_0$ proves that for every~$\bbomega^{k+4}$-large set~$X \finsub \NN$ 
and for every coloring $f : [X]^2 \to k$,
there is an $\bbomega$-large $f$-homogeneous subset~$Y \subseteq X$.
\end{theorem}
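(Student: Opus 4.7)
The plan is to follow the classical Erd\H{o}s--Rado strategy for Ramsey's theorem for pairs, reducing it to the one-dimensional pigeonhole principle via the extraction of an \emph{end-homogeneous} subset. Recall that $Y \subseteq X$ is end-homogeneous for $f : [X]^2 \to k$ if for every $y \in Y$ the value $f(y, y')$ depends only on $y$ whenever $y' \in Y$ and $y' > y$. Any end-homogeneous $Y$ yields a $1$-dimensional coloring $g : Y \to k$, and every $g$-monochromatic subset of $Y$ is automatically $f$-homogeneous.

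The proof would split into two combinatorial lemmas, both formalizable in $\RCA_0$. The first is a \emph{pigeonhole lemma for largeness}: if $Z$ is $\omega^{n+1}$-large with $\min Z \geq k$, then any coloring $g : Z \to k$ admits an $\omega^n$-large monochromatic subset. This follows directly from the inductive definition, since $Z \setminus \min Z$ decomposes into $\min Z$ consecutive $\omega^n$-large blocks; by the ordinary finite pigeonhole, at least $\lceil \min Z / k \rceil \geq 1$ of these blocks share a common color, and any single such block is an $\omega^n$-large monochromatic subset.

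The second is an \emph{end-homogeneous extraction lemma}, which does the bulk of the combinatorial work: from an $\omega^{k+3}$-large set $X$ (with $\min X \geq k$) and any $f : [X]^2 \to k$, one can extract an end-homogeneous $Y \subseteq X$ that is still $\omega^2$-large. I would argue by a greedy tree construction maintaining pairs $(Y_i, X_i)$ with $Y_i$ end-homogeneous, $\max Y_i < \min X_i$, and $f(y, \cdot)$ constant on $X_i$ for each $y \in Y_i$. At step $i$, move $\min X_i$ into $Y_{i+1}$ and apply the pigeonhole lemma to $f(\min X_i, \cdot)$ on $X_i \setminus \min X_i$ to select a monochromatic $X_{i+1}$. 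Chaining the two lemmas then delivers the theorem: from $\omega^{k+4}$-large $X$, first extract an $\omega^2$-large end-homogeneous $Y$, and then apply the pigeonhole lemma once more to the induced $1$-dimensional coloring $g$ on $Y$ to obtain an $\omega$-large $g$-monochromatic (hence $f$-homogeneous) subset.

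The main obstacle is the ordinal accounting inside the extraction lemma: one must verify that the greedy process produces a $Y$ of \emph{unbounded} largeness (specifically $\omega^2$-large), not merely of bounded finite size. Naively, each greedy step consumes one exponent of largeness via pigeonhole and one element by removing $\min X_i$, which would only give a $Y$ of cardinality equal to the number of iterations. To obtain $\omega^2$-largeness of $Y$, one must repeatedly exploit the successor clause $\omega^{n+1}$-large $= \omega^n \cdot \min X$-large, which couples the exponent to $\min X$ and allows the pigeonhole losses over $\min X$ consecutive blocks to be consolidated into a single exponent drop. The precise bound $\omega^{k+4}$ is then obtained by optimizing this consolidation against the $k$-way pigeonhole loss per step, with the convention $\min X \geq 3$ ensuring that the pigeonhole lemma keeps applying throughout the iteration.
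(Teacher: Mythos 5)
First, a point of comparison: the paper does not prove this theorem at all --- it is imported from Ketonen and Solovay~\cite{ketonen1981rapidly} as a black box --- so your sketch can only be judged on its own terms. The Erd\H{o}s--Rado route (end-homogenize, then apply pigeonhole to the induced $1$-dimensional coloring) is a legitimate high-level strategy, but both of your lemmas are unestablished, and the second is where the entire content of the theorem lives. For the pigeonhole lemma, the justification given is simply wrong: decomposing $Z \setminus \min Z$ into $\min Z$ consecutive $\omega^n$-large blocks $Z_0 < \dots < Z_{\min Z - 1}$ does not produce a monochromatic block. The coloring $g$ acts on elements, not on blocks, and each $Z_i$ will in general meet all $k$ colour classes, so \qt{at least $\lceil \min Z/k \rceil$ of these blocks share a common color} is a non sequitur. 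The lemma itself is true (with an exponent drop of exactly $1$ for ordinary largeness, cf.~\cite[Lemma 2.2]{kolo2020some}), but its proof is a genuine induction on $n$: one finds a block $X_t$ whose set of colours is already realized among the earlier blocks, extracts $\omega^{n-1}$-large homogeneous pieces from the sub-blocks of $X_t$ by the induction hypothesis, pigeonholes on the colours of those \emph{pieces}, and prepends a single earlier element of the majority colour to reassemble an $\omega^n$-large homogeneous set. \Cref{lem:largeness-rt1k} of the paper is exactly this argument, adapted to largeness$(T)$.

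For the extraction lemma, you correctly diagnose that the naive greedy end-homogenization dies after about $k+3$ steps, producing a set of bounded cardinality rather than an $\omega^2$-large one --- but the closing paragraph does not repair this; it restates the difficulty. Each greedy step applies the pigeonhole lemma to a $k$-colouring $f(\min X_i, \cdot)$ of the \emph{entire} remainder, where $k$ is fixed and unrelated to $\min X_i$, so there is no mechanism by which $\min X$ many such steps can be \qt{consolidated into a single exponent drop}: the successor clause trades one exponent for $\min X$ blocks, but an end-homogenization step cannot be localized to a single block. The actual Ketonen--Solovay argument works with $\alpha$-largeness for general ordinals $\alpha < \omega^{k+4}$ (not merely powers of $\omega$) and shows that each step of the tree construction effects a controlled ordinal descent, the number of admissible steps being governed by the length of that descent; some device of this kind is indispensable. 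As it stands, your sketch establishes no bound at all, let alone $\omega^{k+4}$.
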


The theorem of Ketonen and Solovay was later generalized by Ko\l odziejczyk and Yokoyama~\cite[Theorem 1.6]{kolo2020some}, who proved that if~$X$ is $\bbomega^{300^{k-1}n}$-large, then for every coloring $f : [X]^2 \to k$, there is an $\bbomega^n$-large $f$-homogeneous subset.

\subsection{Largeness for $\WKL_0 + \BSig_2 + T$}

We now adapt the notion of $\bbomega^n$-largeness, to obtain a quantitative notion of largeness which will play the same role, but for provability over~$\WKL_0 + \BSig_2 + T$ where~$T$ is a $\Pi^0_3$ sentence.

Such a development might seem a bit ad-hoc and unrelated to $\forall \Pi^0_4$ conservation at first sight, but it will make full sense in~\Cref{sect:largeness-ramsey-like}, where it will be shown to be very useful for proving the existence of density notions. The reason of the use of a $\Pi^0_3$ sentence comes from the standard proof that a statement~$\Gamma$ is $\forall \Pi^0_4$ conservative over~$\RCA_0 + \BSig_2$. Assume that some sentence of the form $\forall \Param \forall \param \exists x \forall y \exists z \zeta(\Param \uh_z, \param, x, y, z)$ is not provable over~$\RCA_0 + \BSig_2$, where~$\zeta$ is a $\Sigma^0_0$-formula. Then there exists some model $\M = (M, S)$ with some set~$\Param \in S$ and some integer~$p \in M$ such that $\M \models \RCA_0 + \BSig_2 + \forall x \exists y \forall z \neg \zeta(\Param \uh_z, \param, x, y, z)$. Working in a language enriched with constants~$\Param$ and $\param$, and letting~$T \equiv \forall x \exists y \forall z \neg \zeta(\Param \uh_z, \param, x, y, z)$, we want to create a model of~$\RCA_0 + \BSig_2 + T$ which furthermore satisfies~$\Gamma$. We therefore naturally end up reasoning over~$\RCA_0 + \BSig_2 + T$ for a $\Pi^0_3$ sentence~$T$.

For the remainder of this article, and unless specified, the arithmetic hierarchy is extended to allow the use of the constants $\Param$ and $\param$ inside formulas. Let $\theta(x, y, z)$ be the $\Sigma^0_0$-formula $\neg \zeta(\Param \uh_z,\param,x,y,z)$ and let~$T \equiv \forall x \exists y \forall z \theta(x, y, z)$.

\begin{definition}
Two finite sets~$X < Y$ are \emph{$T$-apart} if 
$$\forall x < \max X \exists y < \min Y \forall z < \max Y \theta(x, y, z)$$
\end{definition}

Note that $T$-apartness is a transitive relation. Moreover, if $X < Y$ are $T$-apart and $X_0 \subseteq X$ and $Y_0 \subseteq Y$, then $X_0, Y_0$ are $T$-apart.

\begin{definition}[Largeness for $\RCA_0 + \BSig_2 + T$]\label[definition]{defi:largeness-rca0-bsig2-t}
A set~$X \finsub \NN$ is 
\begin{itemize}
    \item \emph{$\bbomega^0$-large$(T)$} if $X \neq \emptyset$
    \item \emph{$\bbomega^{(n+1)}$-large$(T)$} if $X \setminus \min X$ is  $(\bbomega^n \cdot \min X)$-large$(T)$
    \item \emph{$\bbomega^n \cdot k$-large$(T)$} if
there are $k$ pairwise $T$-apart $\bbomega^n$-large$(T)$ subsets of~$X$
$$
X_0 < X_1 < \dots < X_{k-1}
$$
\end{itemize}
\end{definition}

As for $\bbomega^n$-largeness, we will only consider sets $X$ such that $\min X \geq 3$. In addition, we will also require that $\min X \geq \param$ to ensure that the constant $\param$ will always be in the new model obtained by building a proper cut (see \Cref{prop:initial-segment-wkl-bsig2-t-largeness}).

Note that if we take $\theta(x, y, z)$ to be the $\top$ formula, then $\bbomega^n \cdot k$-largeness$(T)$ is exactly $\bbomega^n \cdot k$-largeness.
We first prove some basic closure properties for~$\bbomega^n$-largeness$(T)$. A stronger closure property will be proven in~\Cref{sect:largeness-closure}.

\begin{lemma}\label[lemma]{lem:largeness-closure-succ}
$\RCA_0 + \BSig_2 + T$ proves that for every~$b \in \NN$, if $\bbomega^b$-largeness$(T)$ is a largeness notion, then
\begin{enumerate}
    \item For every~$c \in \NN$, $\bbomega^b \cdot c$-largeness$(T)$ is a largeness notion ;
    \item $\bbomega^{b+1}$-largeness$(T)$ is a largeness notion.
\end{enumerate}
\end{lemma}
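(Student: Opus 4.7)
The plan is to verify the two defining axioms of a largeness notion---closure under supersets, and density in every infinite set---for both $\omega^b \cdot c$-largeness$(T)$ and $\omega^{b+1}$-largeness$(T)$. Closure under supersets is straightforward in both cases. For (1), if $X \subseteq X'$ and $X_0 < \dots < X_{c-1}$ witness that $X$ is $\omega^b \cdot c$-large$(T)$, the same subsets remain $\omega^b$-large$(T)$ and pairwise $T$-apart inside $X'$, since $T$-apartness depends only on the $X_i$'s. For (2), given $X \subseteq X'$ with $m = \min X$ and $m' = \min X'$, we have $m' \leq m$, and since $X \setminus m$ is $(\omega^b \cdot m)$-large$(T)$, keeping only the first $m' \leq m$ witnessing subsets shows it is also $(\omega^b \cdot m')$-large$(T)$; the closure property for (1) then lifts this to $X' \setminus m' \supseteq X \setminus m$, giving that $X'$ is $\omega^{b+1}$-large$(T)$.

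The density property for (1) I would prove by (external) induction on $c$. The base case $c = 0$ is vacuous. For the inductive step, given an infinite set $A$, the inductive hypothesis supplies an $\omega^b \cdot c$-large$(T)$ finite subset $Y \subseteq A$, and I need to append an $\omega^b$-large$(T)$ subset $Z \subseteq A$ with $\min Z > \max Y$ and $(Y, Z)$ $T$-apart. Using $T$, for every $x$ there exists some $y$ with $\forall z\, \theta(x, y, z)$; applying $\BSig_2$ to the $\Sigma^0_2$-predicate $\exists y\, \forall z\, \theta(x, y, z)$ yields a uniform bound $B$ such that $\forall x < \max Y\, \exists y < B\, \forall z\, \theta(x, y, z)$. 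Since $A \cap (B, \infty)$ is infinite, the hypothesis that $\omega^b$-largeness$(T)$ is a largeness notion produces an $\omega^b$-large$(T)$ subset $Z \subseteq A$ with $\min Z > B$. For any $x < \max Y$, the witness $y < B < \min Z$ satisfies $\forall z < \max Z\, \theta(x, y, z)$, so $(Y, Z)$ are $T$-apart; the same witness works for each of the $c$ existing subsets of $Y$, since each has max at most $\max Y$. Hence $Y \cup Z$ is $\omega^b \cdot (c+1)$-large$(T)$ inside $A$.

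Part (2) then follows from part (1). Given an infinite set $A$, let $x_0 = \min A$ (with the standing convention $x_0 \geq \max(3, a)$), apply (1) with parameter $c = x_0$ to the infinite set $A \setminus \{x_0\}$ to obtain an $\omega^b \cdot x_0$-large$(T)$ subset $Y$, and set $X = \{x_0\} \cup Y$. Then $\min X = x_0$ and $X \setminus \min X = Y$ is $(\omega^b \cdot \min X)$-large$(T)$, so $X$ is $\omega^{b+1}$-large$(T)$. The main subtlety throughout is the inductive step of (1): one must combine $T$ and $\BSig_2$ to convert the pointwise existence of $y$'s witnessing $T$ into a single bound $B$ that simultaneously separates all $\max Y$-many instances from the next $\omega^b$-large$(T)$ block. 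This is precisely where $\BSig_2$ is essential, in contrast with the analogous argument for plain $\omega^n$-largeness, and it is the only place where the hypothesis $T$ (via the apartness condition built into the definition) genuinely interacts with the induction.
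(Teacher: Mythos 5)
Your argument for the inductive step and for deriving (2) from (1) is exactly the paper's: use $T$ plus $\BSig_2$ (really $\BPi_1$-collection, which is equivalent) to get a uniform bound $B$ separating all $x < \max Y$ from the next $\omega^b$-large$(T)$ block, then prepend $\min A$ and take $c = \min A$ for part (2). The one genuine problem is that you announce the density part of (1) as an \emph{external} induction on $c$. The lemma quantifies $c \in \NN$ internally, and your own proof of (2) applies (1) with $c = \min A$, which is an arbitrary element of the model and need not be standard; an external induction only reaches standard $c$, so as written the step from (1) to (2) fails in any nonstandard model. This is not a cosmetic point: the whole reason the lemma is stated with $c \in \NN$ rather than $c \in \omega$ is that later constructions (e.g.\ \Cref{prop:largeness-bsig2-largeness} and the cut constructions) need $\omega^b \cdot c$-largeness$(T)$ for nonstandard $c$.

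The repair is exactly what the paper does: fix the infinite set $A$ first, and observe that ``$A$ contains an $\omega^b \cdot c$-large$(T)$ subset'' is a $\Sigma^0_1(A)$ predicate of $c$ (finite sets are coded, and $\omega^b \cdot c$-largeness$(T)$ and $T$-apartness of coded finite sets are bounded conditions), so internal $\Sigma^0_1$ induction on $c$ -- available in $\RCA_0$ -- carries your inductive step through for all $c \in \NN$. With that change your proof coincides with the paper's; your additional verification of closure under supersets (which the paper leaves implicit) is correct and harmless.
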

\begin{proof}
Suppose that $\bbomega^b$-largeness$(T)$ is a largeness notion.

Fix an infinite set~$X$. We prove by internal $\Sigma^0_1(X)$ induction over~$c \geq 1$, that $X$ contains an 
$\bbomega^b \cdot c$-large$(T)$ subset. The case~$c = 1$ follows from the fact that $\bbomega^b$-largeness$(T)$ is a largeness notion. Assume the case $k$ holds. Let~$F \subseteq X$ be an $\bbomega^b \cdot k$-large$(T)$ set.
Since~$T$ holds, then by $\BSig_2$, there is some~$d \in X$ such that 
$$
\forall x < \max F \exists y < d \forall z \theta(x, y, z)
$$
Since~$X \setminus [0, d]$ is infinite and $\bbomega^b$-largeness$(T)$ is a largeness notion, 
there is an $\bbomega^b$-large$(T)$ subset~$G \subseteq X \setminus [0, d]$.
Then $F \cup G$ is an $\bbomega^b \cdot (c+1)$-large$(T)$ subset of~$X$. 
Since every infinite set~$X$ contains a $\bbomega^b \cdot c$-large$(T)$ subset, then $\bbomega^b \cdot c$-largeness$(T)$ is a largeness notion.

Suppose now that for every~$c \geq 1$, $\bbomega^b \cdot c$-largeness$(T)$ is a largeness notion. In particular, for every infinite set~$X$, there is an $\bbomega^b \cdot (\min X)$-large$(T)$ set~$F \subseteq X \setminus \{\min X\}$, hence $\{\min X\} \cup F$ is an $\bbomega^{b+1}$-large$(T)$ subset of~$X$. Thus, $\bbomega^{b+1}$-largeness$(T)$ is a largeness notion.
\end{proof}

\begin{proposition}\label[proposition]{prop:largeness-bsig2-largeness}
For every~$n \in \omega$,
$\RCA_0 + \BSig_2 + T$ proves that for every~$c \geq 1$, $\bbomega^n \cdot c$-largeness$(T)$ is a largeness notion.
\end{proposition}

\begin{proof}
We prove by external induction over~$n$ that $\bbomega^n$-largeness$(T)$ is a largeness notion. Then statement of the proposition then follows from~\Cref{lem:largeness-closure-succ}.
For the base case, since every non-empty finite set $X$ is $\bbomega^0$-large$(T)$, then $\bbomega^0$-largeness$(T)$ is a largeness notion.
The induction case is the second item of \Cref{lem:largeness-closure-succ}.

\end{proof}

A set~$X$ is \emph{exp-sparse} if for every~$x < y \in X$, $4^x < y$.

\begin{corollary}\label[corollary]{prop:largeness-bsig2-largeness-sparsity}
For every~$n \in \omega$,
$\RCA_0 + \BSig_2 + T$ proves that for every~$c \geq 1$, exp-sparse $\bbomega^n \cdot c$-largeness$(T)$ is a largeness notion.
\end{corollary}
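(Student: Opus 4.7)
The plan is to reduce this immediately to \Cref{prop:largeness-bsig2-largeness} by observing that exp-sparseness is preserved under taking subsets: if $Y \subseteq Z$ and $Z$ is exp-sparse, then so is $Y$. Thus, it suffices to show that every infinite set $X$ contains an infinite exp-sparse subset $X'$, and then apply \Cref{prop:largeness-bsig2-largeness} to $X'$ to obtain an exp-sparse $\omega^n \cdot c$-large$(T)$ subset $F \subseteq X' \subseteq X$.

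To build $X'$ inside $\RCA_0$, I would use a simple primitive recursive thinning: set $x_0 = \min X$ and $x_{k+1} = \min\{x \in X : x > 4^{x_k}\}$. Since $X$ is infinite, each $x_{k+1}$ is defined, so $X' = \{x_k : k \in \NN\}$ is a $\Delta^0_1(X)$ infinite subset of $X$, hence exists in $\RCA_0$, and is exp-sparse by construction. Applying \Cref{prop:largeness-bsig2-largeness} (under $\RCA_0 + \BSig_2 + T$) to the infinite set $X'$ yields an $\omega^n \cdot c$-large$(T)$ set $F \subseteq X'$, which inherits exp-sparseness from $X'$.

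There is essentially no obstacle here; the only minor point to verify is that the recursion defining $X'$ is legitimate in $\RCA_0$, which it is since the iteration step $x \mapsto \min\{y \in X : y > 4^x\}$ is $\Delta^0_1(X)$ and total on any infinite $X$. The whole argument is a one-line reduction to the preceding proposition.
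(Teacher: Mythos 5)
Your proposal is correct and follows exactly the paper's argument: pass to an infinite exp-sparse subset (the paper states this without detail; your primitive recursive thinning is a fine justification within $\RCA_0$), apply \Cref{prop:largeness-bsig2-largeness} to it, and observe that exp-sparseness is inherited by subsets.
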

\begin{proof}
Fix~$n \in \omega$ and $c \geq 1$. Let~$X$ be an infinite set.
Then, there is an infinite exp-sparse subset~$Y \subseteq X$.
By \Cref{prop:largeness-bsig2-largeness}, there is an $\bbomega^n \cdot c$-large$(T)$ subset~$F \subseteq Y$. In particular, $F$ is an exp-sparse $\bbomega^n \cdot c$-large$(T)$ subset of~$X$.
\end{proof}

\subsection{Largeness$(T)$ combinatorics}

The following lemma is inspired from Ko\l odziejczyk and Yokoyama~\cite[Lemma 2.2]{kolo2020some},
but required some variations of the combinatorics to handle the constraints based on~$T$.

\begin{lemma}\label[lemma]{lem:largeness-rt1k}
$\ISig_1$ proves that for all $b \in \NN$, every $\bbomega^{2b}$-large$(T)$ exp-sparse set~$X$ and every coloring~$f : X \to \min X$, there is an $\bbomega^b$-large$(T)$ $f$-homogeneous subset~$Y \subseteq X$.
\end{lemma}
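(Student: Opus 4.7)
The plan is to argue by external induction on $b$, following the overall strategy of the proof of~\cite[Lemma 2.2]{kolo2020some}. The principal adaptation is that every decomposition of an $\omega^{n}\cdot k$-large$(T)$ set must produce pairwise $T$-apart subsets, and this constraint must be threaded through each application of pigeonhole; the doubled exponent $2b$ compared to naïve bounds reflects the combinatorial overhead imposed by $T$-apartness.

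The base case $b = 0$ is immediate, since any singleton $\{x\} \subseteq X$ is $\omega^0$-large$(T)$ and trivially $f$-homogeneous. For the inductive step from $b$ to $b+1$, fix an exp-sparse $\omega^{2b+2}$-large$(T)$ set $X$ with $m := \min X$ and $f : X \to m$. Unfolding the definition of $\omega^{2b+2}$-largeness$(T)$ twice yields $m$ pairwise $T$-apart $\omega^{2b+1}$-large$(T)$ subsets $X_1 < \cdots < X_m$ of $X \setminus \{m\}$, and within each $X_i$, $\min X_i$ pairwise $T$-apart $\omega^{2b}$-large$(T)$ subsets $X_i^1 < \cdots < X_i^{\min X_i}$ of $X_i \setminus \{\min X_i\}$. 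Each $X_i^j$ is exp-sparse and satisfies $\min X_i^j > m$, so the induction hypothesis applied to $X_i^j$ yields an $\omega^b$-large$(T)$ $f$-homogeneous subset $Z_i^j \subseteq X_i^j$ of some color $c_i^j < m$. Since $T$-apartness is preserved under taking subsets, the family $(Z_i^j)_{i,j}$ remains pairwise $T$-apart.

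The remaining step is to apply the $\Sigma^0_1$ pigeonhole principle available in $\ISig_1$ to the $\sum_{i} \min X_i$ many $Z_i^j$'s distributed over $m$ colors, extracting a color $c^*$ together with a pairwise $T$-apart sequence $W_1 < W_2 < \cdots < W_N$ of same-color $\omega^b$-large$(T)$ pieces that can be merged, possibly with a suitably chosen same-color anchor from $X$, into an $\omega^{b+1}$-large$(T)$ $f$-homogeneous subset $Y$. I expect the main obstacle to be the combinatorial bookkeeping here: $\omega^{b+1}$-largeness$(T)$ of $Y$ requires at least $\min Y$ pairwise $T$-apart $\omega^b$-large$(T)$ pieces inside $Y \setminus \{\min Y\}$, and balancing $\min Y$ against the number of same-color pieces obtained from pigeonhole crucially uses exp-sparseness. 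Specifically, the iterated exponential growth it imposes on the block minima $\min X_i$ ensures that $\sum_i \min X_i$ is large enough both to survive the division by $m$ colors in the pigeonhole and to furnish enough same-color pieces below any candidate value for $\min Y$, which is what one needs to conclude that $Y$ is $\omega^{b+1}$-large$(T)$ and thus complete the induction.
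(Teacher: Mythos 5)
Your overall plan matches the paper's: induct externally on $b$, decompose twice to get $\omega^{2b-2}$-large$(T)$ blocks, apply the induction hypothesis to each to get $\omega^{b-1}$-large$(T)$ homogeneous pieces, and assemble a homogeneous $\omega^b$-large$(T)$ set via pigeonhole plus an anchor of the right color. But the step you flag as ``combinatorial bookkeeping'' is exactly where the real idea lives, and as stated your plan does not go through. Applying pigeonhole \emph{globally} over all the pieces $Z_i^j$ drawn from all of the $X_i$'s gives a popular color $c^*$, but you then need an anchor $x$ with $f(x)=c^*$ sitting \emph{below} the chosen same-color pieces, with at least $x$ of them above it. Nothing guarantees this: $c^*$ could be a color that first appears only very late in $X$ (say only inside $X_{m-1}$), in which case the smallest element of color $c^*$ is already larger than the total number of same-color pieces available, and no $\omega^{b+1}$-large$(T)$ homogeneous set can be assembled from them.

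The paper avoids this by \emph{localizing} the pigeonhole. It picks $t < \min X$ maximal with $f[\bigcup_{i<t}X_i] \supseteq f[X_t]$ (such $t\ge 1$ exists, else the number of colors would strictly grow across blocks and exceed $\min X$). It then decomposes only $X_t$ into $\min X_t$ subblocks, applies the induction hypothesis there, and pigeonholes the resulting colors $c_j < \min X$ over those $\min X_t$ pieces. Exp-sparseness gives $4^{\max X_{t-1}} < \min X_t$, hence $\min X \cdot \max X_{t-1} < \min X_t$, so some color $c$ captures at least $\max X_{t-1}$ pieces $Z_j \subseteq X_t$. The choice of $t$ guarantees $c \in f[\bigcup_{i<t}X_i]$, so there is an anchor $x \le \max X_{t-1}$ with $f(x)=c$ lying below all the $Z_j$, and since $|J| \ge \max X_{t-1} \ge x$, the set $\{x\}\cup\bigcup_{j\in J}Z_j$ is $\omega^b$-large$(T)$ and $f$-homogeneous. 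Your proposal is missing the choice of $t$ and the precise accounting it enables; that is a genuine gap, not bookkeeping, and without it the argument fails as sketched above.
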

\begin{proof}
By induction over~$b$.

Case $b = 0$. For every $\bbomega^0$-large$(T)$ set $X$ and every~$f : X \to \min X$,
every 1-element subset of~$X$ is $f$-homogeneous and $\bbomega^0$-large$(T)$.

Case $b > 0$. Fix an $\bbomega^{2b}$-large$(T)$ exp-sparse set~$X$ and $f : X \to \min X$.
Since~$X$ is $\bbomega^{2b}$-large$(T)$, there are $\min X$ pairwise $T$-apart $\bbomega^{2b-1}$-large$(T)$ subsets $X_0 < \dots < X_{\min X - 1}$ of $X \setminus \min X$.
Suppose $X_0$ is not $f$-homogeneous, otherwise we are done.
Let~$t < \min X$ be maximal such that $f[\bigcup_{i < t} X_i] \supseteq f[X_t]$.
Note that $t \geq 1$ exists since otherwise, 
$$2 \leq |f[X_0]| < |f[X_0 \cup X_1]| < \dots < |f[X_0 \cup \dots \cup X_{\min X-1}]|$$
and therefore $|f[X]| \geq \min X+1$, contradicting the assumption that $f : X \to \min X$.

Since~$X_t$ is $\bbomega^{2b-1}$-large$(T)$, there are $\min X_t$ pairwise $T$-apart $\bbomega^{2b-2}$-large$(T)$ subsets $Y_0 < \dots < Y_{\min X_t - 1}$ of $X_t \setminus \min X_t$.

By induction hypothesis, for every~$j < \min X_t$,
there is a $\bbomega^{b-1}$-large$(T)$ subset $Z_j \subseteq Y_j$ which is $f$-homogeneous for some color~$c_j < \min X$.
Since $X$ is exp-sparse, $4^{\max X_{t-1}} < \min X_t$ so $(\max X_{t-1})^2 < \min X_t$, thus $\min X \times \max X_{t-1} < \min X_t$. Thus, by the finite pigeonhole principle, there is a subset $J \subseteq \{0, \dots, \min X_t - 1\}$ of size at least~$\max X_{t-1}$ and some color~$c < \min X$ such that for every~$j \in J$, $c_j = c$.
By choice of $t$, $f[\bigcup_{i < t} X_i] \supseteq f[X_t]$, so there is some~$x \in \bigcup_{i < t} X_i$ with $f(x) = c$. Thus, the set~$\{x\} \cup \bigcup_{j \in J} Z_j$ is an $\bbomega^b$-large$(T)$ $f$-homogeneous subset of~$X$. 
\end{proof}

\begin{lemma}\label[lemma]{lem:decompose-largeness}
Let~$n, m \in \omega$. $\ISig_1$ proves that
for every~$\bbomega^{n+m+1}$-large$(T)$ set~$X$,
there are $\bbomega^n$-large$(T)$ pairwise $T$-apart subsets $X_0 < \dots < X_{k-1}$ of~$X$
such that $\{ \min X_i : i < k \}$ is $\bbomega^m$-large (in the regular sense of largeness)
\end{lemma}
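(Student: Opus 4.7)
My approach is external induction on $m$, with each case carried out in $\ISig_1$ (and with the convention $\min X \geq 3$, $\min X \geq a$). The base case $m = 0$ is immediate: since $X$ is $\omega^{n+1}$-large$(T)$, the definition unfolds to give that $X \setminus \min X$ is $\omega^n \cdot \min X$-large$(T)$, so it contains an $\omega^n$-large$(T)$ subset $X_0$; then $\{\min X_0\}$ is non-empty, hence $\omega^0$-large.

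For the inductive step from $m$ to $m+1$, I fix $X$ which is $\omega^{n+m+2}$-large$(T)$ with $\min X = x$. By definition, $X \setminus \{x\}$ decomposes into $x$ pairwise $T$-apart $\omega^{n+m+1}$-large$(T)$ subsets $W_0 < \dots < W_{x-1}$. First I would apply the inductive hypothesis to each $W_j$, obtaining $\omega^n$-large$(T)$ pairwise $T$-apart subsets $X^j_0 < \dots < X^j_{k_j-1}$ of $W_j$ whose minima form an $\omega^m$-large set $M_j$. Concatenating all the $X^j_i$'s in order gives a pairwise $T$-apart family of $\omega^n$-large$(T)$ subsets of $X$: the pairwise $T$-apartness within each $W_j$ is inherited from the IH, and across different $W_j$'s from the original decomposition. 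The combined minima $\bigcup_j M_j$ is then $\omega^m \cdot x$-large by construction.

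The crux is to promote this to $\omega^{m+1}$-largeness. My idea is to augment the first subset $X^0_0$ to contain the global minimum, setting $X_0 := X^0_0 \cup \{x\}$; this preserves $\omega^n$-largeness$(T)$ by closure under supersets, and preserves pairwise $T$-apartness from the remaining subsets since $\max X_0 = \max X^0_0$. Then the new combined minima has $x$ as its minimum, and satisfies the definition of $\omega^{m+1}$-largeness provided that its successor after $x$ contains $x$ pairwise-increasing $\omega^m$-large pieces: the sets $M_1, \dots, M_{x-1}$ supply $x-1$ of these, and the final piece must come from the residual structure of $M_0$.

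The hard part will be the combinatorial verification that $M_0$ still contributes the missing $\omega^m$-large piece after its original minimum is displaced by the augmentation. I expect this to require slightly strengthening the inductive hypothesis, for instance ensuring that $\min X^j_0 = \min W_j$ for each $j$ by absorbing $\min W_j$ into $X^j_0$ in the recursive construction, so that the pieces align cleanly and the $\omega^{m+1}$-largeness of the combined minima follows from a direct unfolding of the definition.
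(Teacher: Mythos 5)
You have the right skeleton — the base case, the decomposition of $X \setminus \{x\}$ into $x$ pairwise $T$-apart $\omega^{n+m+1}$-large$(T)$ blocks $W_j$, the concatenation, and the $T$-apartness bookkeeping (including the observation that augmenting $X^0_0$ by $x$ does not disturb $T$-apartness since $\max X_0$ is unchanged) are all sound. But the step you flag as ``the hard part'' is a genuine gap, and the patch you sketch does not close it. After you absorb $x$ into the first block, the set of minima becomes $S = \{x\} \cup (M_0 \setminus \{\min X^0_0\}) \cup M_1 \cup \dots \cup M_{x-1}$, and $\omega^{m+1}$-largeness of $S$ requires $x$ successive $\omega^m$-large pieces inside $S \setminus \{x\}$. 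The families $M_1, \dots, M_{x-1}$ supply only $x-1$ of them, so you need $M_0 \setminus \{\min X^0_0\}$ to still be $\omega^m$-large — and an $\omega^m$-large set minus its minimum is in general not $\omega^m$-large (take $M_0$ minimal for $\omega^m$-largeness). Pinning down $\min X^j_0 = \min W_j$ only identifies which element is lost; it does not restore the lost piece. Nor can you harvest a second $\omega^m$-large piece from a single $M_j$, so the count is irreparably one short under this arrangement.

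The paper's proof resolves exactly this by a different strengthening of the induction: it proves that for any $T$-apart pair $Y_0 < Y_1$ of $\omega^{n+m}$-large$(T)$ sets, one can find blocks $X_1 < \dots < X_{k-1}$ inside $Y_1$ alone such that, with $X_0 = Y_0$ prepended, the minima form an $\omega^m$-large set. The point is that the number of $\omega^{m-1}$-large pieces needed is $\min Y_0$, and since $Y_0$ is $\omega$-large one has $\min Y_1 > \max Y_0 \geq 2\min Y_0$, so the canonical decomposition of $Y_1$ supplies at least $2\min Y_0$ sub-blocks; pairing them up as $(Z_{2j}, Z_{2j+1})$ and applying the inductive hypothesis to each pair produces the $\min Y_0$ required pieces, with the factor-of-two slack feeding the recursion. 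The lemma then follows by taking $Y_0 < Y_1$ to be the first two blocks of the decomposition of $X \setminus \min X$. I would encourage you to reformulate your inductive hypothesis along these lines (externalizing the first block and keying the count to $\min Y_0$ rather than to $\min X$) rather than trying to repair the augmentation trick.
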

\begin{proof}
By external induction over $m$, we prove the following statement that directly imply the lemma (assuming that $\min X \geq 3$): for all $T$-apart pairs $Y_0 < Y_1$ of $\bbomega^{n+m}$-large$(T)$ sets, there are $\bbomega^n$-large$(T)$ subsets $X_1 < \dots < X_{k-1}$ of $Y_1$ such that, letting $X_0 = Y_0$, $\{ \min X_i : i < k \} $ is $\bbomega^m$-large and $X_0, \dots, X_{k-1}$ are pairwise $T$-apart. 

Case $m = 0$. The result is clear, as every non-empty set with an element greater than $3$ is $\bbomega^0$-large$(T)$

Case $m > 0$. Let $Y_0 < Y_1$ be $\bbomega^{n+m}$-large$(T)$ and $T$-apart, let $Z_0 < \dots < Z_{\min Y_1 - 1}$ be $\bbomega^{n+m-1}$-large$(T)$ pairwise $T$-apart subsets of $Y_1$. Since $Y_0$ is $\bbomega^{n+m}$-large$(T)$, then it is $\bbomega$-large and therefore $\min Y_1 > \max Y_0 \geq 2\times (\min Y_0)$.

We can then apply the inductive hypothesis on the pairs $$(Z_0,Z_1), \dots, (Z_{2(\min Y_0) - 2}, Z_{2(\min Y_0) - 1})$$
to get for every $j < \min Y_0$, families of pairwise $T$-apart $\bbomega^{n}$-large$(T)$ subsets $Z_{2j} = X_{j,0} < \dots < X_{j,k_j}$ of $Z_{2j} \cup Z_{2j+1}$ such that $\{\min X_{j,i} : i < k_j\}$ is $\bbomega^{m-1}$-large.

Consider the family, $Y_0 < X_{0,0} < X_{0,1} < \dots < X_{0,k_0} < X_{1,0} < X_{1,1} < \dots < X_{\min Y_0 - 1, k_{\min Y_0 - 1}}$. Every block of this family is $\bbomega^n$-large$(T)$. Since $X_{0,0} \subseteq Y_1$, then $Y_0$ and $X_{0,0}$ are $T$-apart. Moreover, for all $j < \min Y_0 - 1$, since $X_{j,k_j} \subseteq Z_{2j+1}$ and $X_{j+1,0} = Z_{2j+2}$, $X_{j,k_j}$ and $X_{j+1,0}$ are $T$-apart.
So by denoting $W_i$ the $i$-th element of the family and by $k$ the cardinality of the family, $\{\min W_i | i < k\}$ is $\bbomega^m$-large and the $W_i$'s are pairwise $T$-apart.
This completes the proof.
\end{proof}


\subsection{Closure of largeness$(T)$}\label[section]{sect:largeness-closure}

Given an ordinal~$\alpha \leq \epsilon_0$, we let $\WF(\alpha)$ be the $\forall\Pi^0_3$ statement of the well-foundedness of~$\alpha$. It is well-known that, over~$\RCA_0$, $\WF(\alpha)$ is equivalent to the fact that every infinite set admits an $\alpha$-large subset (see \cite[Lemma 3.2]{kolo2020some}). This motivates the following definition:

\begin{statement}
$\WF_T(\bbomega^b)$: Every infinite set admits an $\bbomega^b$-large$(T)$ subset. In other words, $\bbomega^b$-largeness$(T)$ is a largeness notion.    
\end{statement}

For every $n \in \omega$, $\RCA_0$ proves that $\WF(\bbomega^n)$ holds, but $\RCA_0$ does not prove~$\WF(\bbomega^\bbomega)$. Thus, given a model~$\M = (M, S)$, the set $\WF(\bbomega^\M) = \{ b \in M : \M \models \WF(\bbomega^b) \}$ is a cut of~$\M$ which is proper iff $\M \not \models \WF(\bbomega^\bbomega)$. $\RCA_0$ proves that $\WF(\M)$ is an additive cut, that is, for every~$b \in \NN$, $\WF(\bbomega^b)$ implies $\WF(\bbomega^{2b})$ (see \cite[Lemma 3.2]{kolo2020some}). We prove the same property for~$\WF_T(\bbomega^\M) = \{ b \in M : \M \models \WF_T(\bbomega^b) \}$.

\begin{lemma}[$\RCA_0 + \BSig_2 + T$]\label[lemma]{lem:large-set-of-large-sets-is-large}
Let~$a, b \in \NN$, $k \geq 0$ and $X_0 < X_1 < \dots < X_{k-1}$ be pairwise $T$-apart $\bbomega^a$-large$(T)$ sets such that $\{\max X_s : s < k \}$ is $\bbomega^{b+1}$-large$(T)$. 
Then $\{\max X_0\} \cup \bigcup_{1 \leq s < k} X_s$ is $\bbomega^{a+b}$-large$(T)$.
\end{lemma}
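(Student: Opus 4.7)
The plan is to argue by external induction on $b$, peeling off one layer of the inductive definition of $\omega^n$-largeness$(T)$ at each step and grouping the $X_s$'s according to how the maxima are clustered inside $\{\max X_s : s < k\}$. The shape of the conclusion is chosen precisely so that discarding $X_0 \setminus \{\max X_0\}$ makes the minimum of the resulting set coincide with $\min \{\max X_s : s < k\}$, which is what powers the inductive bookkeeping.

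For the base case $b = 0$, the hypothesis that $Y := \{\max X_s : s < k\}$ is $\omega^1$-large$(T)$ unfolds to: $Y \setminus \min Y = \{\max X_s : 1 \leq s < k\}$ is $\omega^0 \cdot \max X_0$-large$(T)$, which forces $k - 1 \geq \max X_0$. Then $X_1, X_2, \dots, X_{\max X_0}$ constitute $\max X_0$ pairwise $T$-apart $\omega^a$-large$(T)$ subsets of $\bigcup_{1 \leq s < k} X_s$, making it $\omega^{a-1} \cdot \max X_0$-large$(T)$ when $a \geq 1$ (and when $a = 0$ the conclusion is simply non-emptiness). Prepending the minimum $\max X_0$ yields $\omega^a$-largeness$(T)$, i.e.\ $\omega^{a+0}$-largeness$(T)$, of $\{\max X_0\} \cup \bigcup_{1 \leq s < k} X_s$.

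For the induction step, assume the lemma for $b$ and suppose $\{\max X_s : s < k\}$ is $\omega^{b+2}$-large$(T)$. Unfolding one layer yields $\max X_0$ pairwise $T$-apart $\omega^{b+1}$-large$(T)$ subsets $Y_0 < Y_1 < \dots < Y_{\max X_0 - 1}$ of $\{\max X_s : 1 \leq s < k\}$. For each $j$ set $S_j = \{s \geq 1 : \max X_s \in Y_j\}$, so $(X_s)_{s \in S_j}$ is a pairwise $T$-apart family of $\omega^a$-large$(T)$ sets whose maxima form exactly $Y_j$. The inductive hypothesis applied to this subfamily produces an $\omega^{a+b}$-large$(T)$ set $Z_j := \{\max X_{\min S_j}\} \cup \bigcup_{s \in S_j,\ s > \min S_j} X_s$. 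The resulting $Z_0 < Z_1 < \dots < Z_{\max X_0 - 1}$ live inside $\bigcup_{1 \leq s < k} X_s$ and, provided they are pairwise $T$-apart, witness its $\omega^{a+b} \cdot \max X_0$-largeness$(T)$, whence $\{\max X_0\} \cup \bigcup_{1 \leq s < k} X_s$ is $\omega^{a+b+1}$-large$(T)$.

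The delicate point, and the reason for the asymmetric shape of the conclusion, is verifying that the $Z_j$'s inherit $T$-apartness from the $Y_j$'s. Since $Z_j$ contains only $\max X_{\min S_j}$ from the first relevant block and the full sets $X_s$ for larger $s \in S_j$, a direct computation shows $\min Z_j = \max X_{\min S_j} = \min Y_j$ and $\max Z_j = \max X_{\max S_j} = \max Y_j$. The $T$-apartness condition for $(Z_j, Z_{j+1})$ is a quantified statement about precisely $\max Z_j$, $\min Z_{j+1}$ and $\max Z_{j+1}$, so it reduces verbatim to the $T$-apartness condition for $(Y_j, Y_{j+1})$ supplied by the partition step. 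This coincidence of max/min data is the whole reason why one keeps only $\max X_0$ (rather than all of $X_0$) in the final set.
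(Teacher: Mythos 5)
Your proof is correct and follows essentially the same route as the paper's: induction on $b$, decomposing the set of maxima into pairwise $T$-apart blocks, applying the induction hypothesis to the subfamily of $X_s$'s whose maxima land in each block, and transferring $T$-apartness via the observation that the assembled sets share their min and max with the corresponding blocks of maxima. The only cosmetic difference is in the base case, where the paper simply notes that the union contains the $\omega^a$-large$(T)$ set $X_1$ and invokes closure under supersets, rather than unfolding $\omega^1$-largeness$(T)$ as you do.
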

\begin{proof}
By induction over~$b$. If~$b = 0$, then since any $\bbomega^1$-large$(T)$ set has cardinality at least~2, $k > 1$, and by assumption, $X_1$ is $\bbomega^a$-large$(T)$, so $\{\max X_0\} \cup \bigcup_{1 \leq s < k} X_s$ is $\bbomega^{a+b}$-large$(T)$.

Let~$b > 0$. Let~$Z_0 < Z_1 < \dots Z_{\ell-1}$ be pairwise $T$-apart $\bbomega^b$-large$(T)$ sets such that $\{\ell\} \cup Z_0 \cup \dots \cup Z_{\ell-1}$ is an $\bbomega^{b+1}$-large$(T)$ subset of~$\{\max X_s : s < k \}$. By induction hypothesis, for every~$t < \ell$, the set $W_t = \{\min Z_t\} \cup \bigcup \{ X_s : \max X_s \in Z_t \setminus \min Z_t\}$ is $\bbomega^{a+b-1}$-large$(T)$. Note that $\min Z_t = \min W_t$ and $\max W_t = \max Z_t$. Given~$i < j < \ell$, since~$Z_i$ and $Z_j$ are $T$-apart, and since $\max Z_i = \max W_i$, $\min Z_j = \min W_j$ and $\max W_j = \max Z_j$, then $W_i$ and $W_j$ are $T$-apart. Last, since~$\max X_0 \leq \ell$, then $\{\max X_0\} \cup W_0 \cup \dots \cup W_{\ell-1}$ is an $\bbomega^{a+b}$-large$(T)$ subset of $\{\max X_0\} \cup \bigcup_{1 \leq s < k} X_s$.
\end{proof}

\begin{proposition}
$\RCA_0 + \BSig_2 + T$ proves that for every~$a \in \NN$, if $\bbomega^a$-largeness$(T)$ is a largeness notion, then so is $\bbomega^{2a}$-largeness$(T)$.
\end{proposition}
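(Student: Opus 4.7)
The plan is to invoke Lemma \ref{lem:large-set-of-large-sets-is-large} with $b = a$, which reduces the task to constructing, for every infinite $Y$, pairwise $T$-apart $\omega^a$-large$(T)$ subsets $X_0 < \cdots < X_{k-1}$ of $Y$ whose maxes $\{\max X_s : s < k\}$ form an $\omega^{a+1}$-large$(T)$ set.

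First I would iteratively build pairwise \emph{strongly} $T$-apart $\omega^a$-large$(T)$ subsets $X_0 < X_1 < \cdots$ of $Y$, following the construction in the proof of Lemma \ref{lem:largeness-closure-succ}: at each stage, use $\BSig_2$ applied to $T$ to obtain a bound $d$ with $\forall x < \max X_{s-1}\,\exists y < d\,\forall z\,\theta(x,y,z)$, then extract $X_s \subseteq Y \cap (d, \infty)$ as an $\omega^a$-large$(T)$ set via the hypothesis. The crucial feature is that the witness $y$ satisfies $\forall z\,\theta$ unbounded, not merely $\forall z < \max X_s\,\theta$. From this, one verifies a propagation property: for any two disjoint index sub-intervals $I < J$, the sets $\{\max X_s : s \in I\}$ and $\{\max X_s : s \in J\}$ are $T$-apart, since the strong-$T$ witness between $X_{\max I}$ and $X_{\max I + 1}$ lies below $\min X_{\max I + 1} \leq \max X_{\min J}$ and works for every $z$.

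Next, the hypothesis implies classical $\WF(\omega^a)$ (as $\omega^a$-large$(T)$ implies classically $\omega^a$-large), and hence classical $\WF(\omega^{a+1})$ by the classical analogue of Lemma \ref{lem:largeness-closure-succ}, provable in $\RCA_0$. Iterating the construction sufficiently --- as a $\Sigma^0_1$-induction on the length --- makes $\{\max X_s : s < n\}$ contain a classically $\omega^{a+1}$-large subset $M'$. I would then prove, by internal $\Delta^0_0$-induction on $n$, the upgrade claim that any classically $\omega^n$-large set whose consecutive-index sub-blocks are pairwise $T$-apart is already $\omega^n$-large$(T)$; the inductive step simply takes the classical partition of $X \setminus \min X$ into $\min X$ consecutive $\omega^{n-1}$-large blocks, each inheriting both classical largeness and sub-block $T$-apartness. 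Applied to $M'$, which satisfies these hypotheses by the propagation above, this yields the needed $\omega^{a+1}$-large$(T)$ set of maxes, and Lemma \ref{lem:large-set-of-large-sets-is-large} with $b = a$ then produces an $\omega^{2a}$-large$(T)$ subset of $Y$. The main anticipated obstacle is making ``iterate long enough'' precise inside $\RCA_0 + \BSig_2 + T$: we must verify that classical $\WF(\omega^{a+1})$ combines cleanly with the strong-$T$-apartness propagation to yield $M'$ uniformly in $a$, without requiring induction of higher complexity than what is available.
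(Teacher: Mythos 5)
Your proposal is correct and follows the paper's proof in its essentials: both arguments greedily build pairwise $T$-apart $\omega^a$-large$(T)$ blocks $X_0 < X_1 < \dots$ inside the given infinite set using $\BSig_2$ and $T$, arrange for the set of maxima to contain an $\omega^{a+1}$-large$(T)$ subset $F$, and conclude via \Cref{lem:large-set-of-large-sets-is-large} with $b = a$. The one step you do differently is the middle one. The paper simply observes that, since $\omega^a$-largeness$(T)$ is a largeness notion, so is $\omega^{a+1}$-largeness$(T)$ by \Cref{lem:largeness-closure-succ}, and applies this directly to the infinite set $\{\max X_s : s \in \NN\}$ to extract $F$. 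You instead pass through classical $\WF(\omega^{a+1})$ and then upgrade a classically $\omega^{a+1}$-large subset of the maxima to an $\omega^{a+1}$-large$(T)$ one, exploiting the fact that the greedy construction yields the strong (unbounded-in-$z$) form of apartness, so that \emph{any} two subsets $A < B$ of the set of maxima are automatically $T$-apart and every classical decomposition is already a $T$-decomposition. This upgrade lemma is sound and self-contained, but it is extra work that the paper's route avoids. Finally, your anticipated obstacle about ``iterating long enough'' is not a real one: as in the paper, one first shows by contradiction (using $\BSig_2$, $T$ and the largeness-notion hypothesis) that the greedy construction never stalls, so the full infinite sequence and hence the infinite set of maxima exist as sets, and $\WF(\omega^{a+1})$ (or, in the paper's version, the largeness-notion property of $\omega^{a+1}$-largeness$(T)$) is then applied to that infinite set; no advance bound on the number of iterations is needed.
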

\begin{proof}
Suppose $\bbomega^a$-largeness$(T)$ is a largeness notion. 
Fix an infinite set~$X$. We build greedily an infinite subsequence $X_0 < X_1 < \dots$ of pairwise $T$-apart $\bbomega^a$-large$(T)$ sets. Suppose for the contradiction that there is some~$k \in \NN$ such that $X_{k+1}$ is not found.

Since~$T$ holds, then by $\BSig_2$, there is some~$b \in X$ such that 
$$
\forall x < \max X_k \exists y < b \forall z \theta(x, y, z)
$$
Since~$X \setminus [0, b]$ is infinite and $\bbomega^n$-largeness$(T)$ is a largeness notion, 
there is an $\bbomega^n$-large$(T)$ subset~$X_{k+1} \subseteq X \setminus [0, b]$.
Then $X_0, \dots, X_{k+1}$ are pairwise $T$-apart, contradiction.

By \Cref{lem:largeness-closure-succ}, $\bbomega^{a+1}$-largeness$(T)$ is a largeness notion.
Since~$Y = \{\max Y_s : s \in \NN \}$ is an infinite set, then there is an $\bbomega^{a+1}$-large$(T)$ subset~$F$.
By \Cref{lem:large-set-of-large-sets-is-large}, $\bigcup \{ X_s : \max X_s \in F \}$ is an $\bbomega^{2a}$-large$(T)$ subset of~$X$.
\end{proof}

\subsection{A generalized Parsons theorem for $\WKL_0 + \BSig_2 + T$}

We now turn to the proof of a generalized Parsons theorem for this notion of largeness. The proof is an elaboration of the original construction by Kirby and Paris~\cite{Kirby1977InitialSO}. It is based on the construction of a semi-regular cut which satisfies some extra properties to make it a model of~$\RCA_0 + \BSig_2 + T$. We start with a few definitions:

\begin{definition}[Cut]
A \emph{cut} in a model $M$ of first order arithmetic is a nonempty subset $I \subseteq M$ which is closed by successor, and is an initial segment of~$M$, that is, if $a \in I$ and $b \leq a$, then $b \in I$

\end{definition}


\begin{definition}[Semi-regular cut \cite{Kirby1977InitialSO}]
A cut $I \subseteq M$ is said to be \emph{semi-regular} if for every $M$-finite set $E \subseteq M$ such that $|E| \in I$, $E \cap I$ is bounded in $I$.
\end{definition}

The following proposition shows that semi-regular cuts are exactly those coding a model of~$\WKL_0$:

\begin{proposition}[{See Scott \cite{scott1962algebras} and Kirby and Paris \cite[Proposition 1]{Kirby1977InitialSO}}]
Let $I \subseteq M$ be a cut and let $\mathsf{Cod}(M/I) = \{S \cap I : S \mbox{ is } M\mbox{-finite}\}$. Then $I$ is semi-regular if and only if $(I, \mathsf{Cod}(M/I)) \models \WKL_0$.   
\end{proposition}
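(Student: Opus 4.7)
The plan is to establish both directions of the biconditional, first the easier backward direction by contraposition, then the forward direction by successively checking the axioms of $\RCA_0$ and finally $\WKL$.

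For the backward direction, I will assume $I$ is not semi-regular, witnessed by an $M$-finite set $E$ with $|E| = n \in I$ and $E \cap I$ unbounded in $I$, and derive a failure of $\BSig_1$ in $(I, \mathsf{Cod}(M/I))$ (which suffices since $\BSig_1$ is provable in $\RCA_0 \subseteq \WKL_0$). Let $\sigma : [0, n-1] \to E$ be the $M$-increasing enumeration of $E$. Viewing $\sigma$ as an $M$-finite set of coded pairs, the intersection $f := \sigma \cap I$ lies in $\mathsf{Cod}(M/I)$ and codes a partial function. By the monotonicity of $\sigma$ and the downward closure of $I$, its domain $D = \{i < n : \sigma(i) \in I\}$ is an initial segment $[0, m^*)$ of $[0, n]$ with $m^* \in I$. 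One checks that $D$ is $\Delta^0_1$-definable from $f$ and hence lies in $\mathsf{Cod}(M/I)$ by $\RCA_0$'s comprehension. Then $f : [0, m^*) \to I$ is total with bounded domain, so $\BSig_1$ would force its range to be bounded in $I$; but the range is exactly $E \cap I$, unbounded by assumption.

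For the forward direction, I will first verify $(I, \mathsf{Cod}(M/I)) \models \RCA_0$: the basic arithmetic axioms are inherited from $M$ since $I$ is closed under $+, \cdot$ and downward closed, while $\ISig_1$ and $\Delta^0_1$-comprehension follow from semi-regularity via the standard overspill argument of Kirby and Paris --- any $\Sigma_0$ property with $M$-finite parameters holding throughout $I$ persists to some element of $M \setminus I$, which yields the induction and comprehension schemes in the usual way. Then for $\WKL$, given an infinite binary tree $T = S \cap I \in \mathsf{Cod}(M/I)$, I replace $S$ by its $M$-finite prefix closure so that $S$ becomes a tree in $M$. The statement ``$S$ has a node at level $\ell$'' is $\Sigma_0$ and holds for every $\ell \in I$ by infinitude of $T$, so by overspill it holds for some $\ell^* \in M \setminus I$, yielding a witnessing node $\sigma^* \in S$ of length $\ell^*$. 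The candidate path is $P := \sigma^* \cap I$, viewed as the intersection with $I$ of the $M$-finite set $\{(i, \sigma^*(i)) : i < \ell^*\}$; one then checks that $P \in \mathsf{Cod}(M/I)$ and that for each $\ell \in I$, the initial segment $P \uh_\ell = \sigma^* \uh_\ell$ is a node of $S$ whose code lies in $I$, hence $P \uh_\ell \in S \cap I = T$.

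The main obstacle I anticipate is the coding delicacy in the $\WKL$ step, specifically ensuring that the codes of all prefixes $\sigma^* \uh_\ell$ for $\ell \in I$ remain in $I$. This is exactly where the full strength of semi-regularity is invoked, going beyond the overspill needed to verify $\RCA_0$: it is applied to the $M$-finite sequence of prefix codes of $\sigma^*$, whose cardinality is $\ell^* \notin I$ but which must be controlled so that its intersection with $I$ is a genuine initial segment of the path.
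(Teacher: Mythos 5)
Your overall strategy (contraposition for the backward direction via a failure of induction/bounding, overspill on a suitable $M$-finite tree for the forward direction) is the right one, but there are two genuine gaps and one misdiagnosis.

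First, in the backward direction, the claim that the domain $D = \{i < n : \sigma(i) \in I\}$ is of the form $[0, m^*)$ ``by the monotonicity of $\sigma$ and the downward closure of $I$'' is not justified. Downward closure only makes $D$ an initial segment of $[0,n)$ in the sense of being closed downward; it does not give $D$ a supremum in $[0,n]$. In fact, if $E \cap I$ is unbounded in $I$ then $D$ provably has \emph{no} maximum (if $m$ were the largest index with $\sigma(m) \in I$, then $E\cap I = \{\sigma(0),\dots,\sigma(m)\}$ would be bounded by $\sigma(m) \in I$), so $D$ is a proper cut of $[0,n)$ and is definitely not of the form $[0, m^*)$ externally. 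What rescues the argument is the order of operations you have reversed: one should \emph{first} observe that $D$ is $\Delta^0_1$-definable with parameter $f$ and hence, assuming $\RCA_0$ holds in $(I, \mathsf{Cod}(M/I))$, $D \in \mathsf{Cod}(M/I)$; \emph{then} invoke $\ISig_1$ in that structure to conclude that the nonempty bounded set $D$ has a maximum $m$ there, and derive the contradiction directly from $\sigma(m) \in I$ bounding $E \cap I$. (Once one is at this point the appeal to $\BSig_1$ is redundant.) Alternatively and more directly, the $\Sigma^0_1$-definable proper cut $D$ witnesses outright failure of $\ISig_1$.

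Second, in the forward direction, replacing $S$ by its $M$-finite prefix closure $S'$ is the wrong move. The closure adds new nodes $\rho \preceq \tau$ for $\tau \in S$ with $\mathrm{code}(\tau) \notin I$; such $\rho$ may have code in $I$ yet fail to belong to $T = S \cap I$, so $S' \cap I$ can strictly contain $T$. Your final step then asserts $\sigma^* \uh_\ell \in S$ for $\ell \in I$, but you only know $\sigma^* \uh_\ell \in S'$. The extracted path is therefore a path through $S' \cap I$, not necessarily through $T$. The correct operation is to \emph{shrink} $S$ rather than expand it: set $\widetilde{S} = \{\tau \in S : \forall \rho \preceq \tau\ (\rho \in S)\}$. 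Then $\widetilde{S}$ is an $M$-finite tree, and one checks, using only that $T$ is a tree in the structure, that $\widetilde{S} \cap I = T$; now the overspill-and-project argument goes through with $\widetilde{S}$ in place of $S'$. Finally, your closing paragraph misidentifies where semi-regularity enters: you cannot apply semi-regularity to the set of prefix codes of $\sigma^*$, since its cardinality $\ell^*+1$ is not in $I$. The actual point is that semi-regularity forces $I$ to be closed under $+, \cdot$ and exponentiation (otherwise an interval or a geometric progression witnesses failure), and this closure is what guarantees that codes of prefixes of length $\ell \in I$ remain in $I$; this closure is already consumed in verifying $\RCA_0$, so no extra use of semi-regularity is needed in the $\WKL$ step beyond having $\RCA_0$.
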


\begin{proposition}\label[proposition]{prop:initial-segment-wkl-bsig2-t-largeness}
Given a countable non-standard model $\M = (M,S,\param^{\M},\Param^{\M})$ of $\ISig_1$ and an $M$-finite set~$Z \subseteq M$ which is $\bbomega^{2^c}$-large$(T)$ and exp-sparse for some~$c \in M \setminus \omega$, there is an initial segment~$I$ of $M$ such that $(I, \mathsf{Cod}(M/I),\param^{\M},\Param^{\M}\cap I) \models \WKL_0 + \BSig_2 + T$ and $I \cap Z$ is infinite in~$I$.
\end{proposition}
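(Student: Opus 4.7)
The plan is to construct the cut $I$ as the supremum of an $\omega$-indexed sequence $x_0 < x_1 < \dots$ of elements chosen progressively from $Z$, following the Kirby-Paris~\cite{Kirby1977InitialSO} approach to building semi-regular cuts, enriched as in the proof of the generalized Parsons theorem for $\WKL_0$ in Patey-Yokoyama~\cite{patey2018proof}, and further adapted here to secure the $\Pi^0_3$ sentence~$T$. The key idea is that $\omega^{2^c}$-largeness$(T)$ with $c$ nonstandard provides enough budget to both extract homogeneous subsets via the coloring lemma~\Cref{lem:largeness-rt1k} (which roughly halves the largeness exponent at each step) and to preserve the $T$-apartness structure that will force $T$ in the resulting coded model.

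I would enumerate in a countable schedule three families of requirements, each appearing cofinally: (i) for each $M$-finite set $E$, the semi-regularity requirement ``$x_n \notin E$ for all sufficiently large $n$'', which will give $(I, \mathsf{Cod}(M/I)) \models \WKL_0$; (ii) suitable $\Sigma_2$-bounding requirements to force $\BSig_2$ in the cut; and (iii) for each element $x$ that has entered the cut, a witness requirement ``$\exists y \in I \; \forall z \in I \; \theta(x, y, z)$''. In parallel, I would build a decreasing sequence of exp-sparse $M$-finite subsets $Z = Z_0 \supseteq Z_1 \supseteq \dots$, with $Z_n$ being $\omega^{\alpha_n}$-large$(T)$, where $\alpha_0 = 2^c$ and $\alpha_{n+1}$ drops by roughly a factor of two, so that $\alpha_n$ remains nonstandard for every standard $n$. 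Setting $x_n = \min Z_n$, requirements~(i) and~(ii) are processed via~\Cref{lem:largeness-rt1k} applied to appropriate $(\min Z_n)$-colorings of $Z_n$, while~\Cref{lem:decompose-largeness} is used to maintain $T$-apartness between successive finalized blocks.

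The genuinely new ingredient is requirement~(iii). At the stage handling $x$, the $T$-apartness between the block containing $x$ and the tail $Z_n$ yields a witness $y < \min Z_n$ with $\forall z < \max Z_n \; \theta(x, y, z)$. The difficulty is that this only covers $z$ up to $\max Z_n$, not all of $I$. To upgrade, I would observe that the candidate set of good $y$'s is an $M$-finite subset of $[0, \min Z_n)$ and that, as we refine along the subsequent tails $Z_n \supseteq Z_{n+1} \supseteq \dots$, the $M$-finite sets of $y$'s that remain good up to $\max Z_k$ form a decreasing family of nonempty subsets of the fixed $M$-bounded interval $[0, \min Z_n)$. Hence, inside $M$, some specific $y^* < \min Z_n$ belongs to all of them, and this $y^* \in I$ satisfies $\forall z \in I \; \theta(x, y^*, z)$, since each $z \in I$ is eventually bounded by some $\max Z_k$. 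Transitivity of $T$-apartness ensures that every tail remains $T$-apart from $x$, which is what keeps the candidate sets nonempty.

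The main obstacle is precisely this last step: turning the bounded quantifier ``$\forall z < \max Z_n$'' appearing in the definition of $T$-apartness into the unbounded quantifier ``$\forall z \in I$'' demanded by $T$. The bounded-candidate observation above, combined with the transitivity of $T$-apartness and \Cref{lem:decompose-largeness}, resolves this cleanly. The calibration $\alpha_0 = 2^c$ with nonstandard $c$ is then exactly what is needed so that the $\omega$-many extractions, each at worst halving the exponent, still leave a nonstandardly-large$(T)$ tail $Z_n$ at every standard stage, allowing the construction to run through all requirements and deliver the desired cut $I$ with $I \cap Z$ infinite in $I$.
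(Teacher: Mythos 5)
Your overall construction matches the paper's proof: build a descending sequence of exp-sparse sets $Z = Z_0 \supseteq Z_1 \supseteq \dots$ with $Z_n$ being $\omega^{2^{c-n}}$-large$(T)$ (so the exponent halves each step and stays nonstandard at every standard stage), cycle through requirements for semi-regularity (giving $\WKL_0$ via a pigeonhole over $T$-apart blocks), $\RT^1$ (giving $\BSig_2$ via \Cref{lem:largeness-rt1k}), $T$, and infinitude of $Z \cap I$, then set $I = \sup_n \min Z_n$. The paper likewise secures $T$ by peeling off a pair of $T$-apart blocks and keeping the second.

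However, your treatment of requirement (iii) contains a genuine error. You claim the sets $W_k = \{ y < \min Z_n : \forall z < \max Z_k\, \theta(x, y, z) \}$ form a decreasing family and propose to extract $y^*$ from their intersection. In fact the family is \emph{increasing}: for $k \geq n$ one has $Z_k \subseteq Z_n$, so $\max Z_k \leq \max Z_n$, the bounded universal quantifier weakens, and $W_k$ grows. Moreover, even if the family were decreasing, an $\omega$-indexed decreasing chain of nonempty $M$-finite subsets of an $M$-bounded interval need not have nonempty intersection (e.g.\ $W_k = [k, N]$ with $N$ nonstandard), so the proposed intersection step does not work as stated. The correct observation is simpler and is precisely what the paper uses: since every $Z_j$ with standard $j \geq n$ is a subset of $Z_n$ with nonstandardly many elements, each $\min Z_j < \max Z_n$, hence $\max Z_n > I$ for every standard $n$. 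Therefore the witness $y < \min Z_n$ produced by a single application of $T$-apartness between the current block and the tail already satisfies $\forall z < \max Z_n\, \theta(x, y, z)$, and this bound already subsumes all $z \in I$. No upgrade and no intersection argument are needed; with this one observation your construction runs through and coincides with the paper's proof.
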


\begin{proof}
Let $(E_i)_{i \in \omega}$ be an enumeration containing all the $M$-finite sets infinitely many times each and let $(f_i)_{i \in \omega}$ be an enumeration containing infinitely many times all the $M$-finite functions from $\{0, \dots, \max Z\}$ to a $k$ where $k < \max Z$.

We will build a decreasing sequence of sets $Z = Z_0 \supseteq Z_1 \supseteq \dots$ such that $Z_i$ will be $\bbomega^{2^{c - i}}$-large$(T)$ (and still exp-sparse). \\

At stages of the form $s = 4i$, let $Z_{4i}$ be given. If $\min Z_{4i} \leq |E_i|$, then keep $Z_{4i+1} = Z_{4i}$. If $\min Z_{4i} > |E_i|$, then, letting $Z_{4i}^0 < \dots < Z_{4i}^{\min Z_{4i} -1}$ be $\bbomega^{2^{c - 4i - 1}}$-large$(T)$ such that $\bigsqcup_{j < \min Z_{4i}} Z_{4i}^j \subseteq Z_{4i}$, by the finite pigeonhole principle, there exists a $j < \min Z_{4i}$ such that $E_i \cap Z_{4i}^j = \emptyset$, in this case, take $Z_{4i+1} = Z_{4i}^j$ for such a $j$.

At stages of the form $s = 4i + 1$, let $Z_{4i+1}$ be given. Let $k$ be the range of $f_i$, if $\min Z_{4i+1} < k$, then keep $Z_{4i+2} = Z_{4i+1}$. If $\min Z_{4i+1} > k$ then $f_i$ induce a coloring $\tilde{f_i} : Z_{4i+1} \to \min Z_{4i+1}$ and let $Z_{4i+2}$ be an $\bbomega^{2^{c - 4i - 2}}$-large$(T)$ $\tilde{f_i}$-homogeneous subset of $Z_{4i+1}$ (by \Cref{lem:largeness-rt1k}).

At stages of the form $s = 4i+2$, let $Z_{4i+2}$ be given and put $Z_{4i+3} = Z_{4i+2} \setminus \{\min Z_{4i+2}\}$. 

At stages of the form $s = 4i +3$, let $Z_{4i+3}$ be given. $Z_{4i+3}$ contains at least two $\bbomega^{2^{c - 4i - 4}}$-large$(T)$ and $T$-apart subsets, $Y_0 < Y_1$. Take $Z_{4i+4} = Y_1$.


Finally, let $I = \sup\{\min Z_i | i \in \omega\}$. First, note that by convention, for every $\bbomega^{2^c}$-large$(T)$ set~$Z$, $\param^{\M} \leq \min X$, hence $\param^\M \in I$.

The stages of the form $s = 4i$ ensure that $I$ is a semi-regular cut and therefore $(I, \mathsf{Cod}(M/I)) \models \WKL_0$. 

The stages of the form $s = 4i+2$ ensure that $Z_i \cap I$ is infinite in $I$ for every $i$ (and in particular $Z \cap I$ is infinite in $I$). 

The stages of the form $s = 4i+1$ ensure that $(I, \mathsf{Cod}(M/I)) \models \RT^1$ (and therefore $\BSig_2$): let $f : I \to k \in \mathsf{Cod}(M/I)$, there exists an index $i \in \omega$ such that $f = f_i \cap I$ (and therefore $f = f_i \uh I$) and since $k \in I$, we can take the $i$ to be large enough for $\min Z_{4i+1}$ to be bigger than $k$ (Since every such function appears infinitely many times in the enumeration). By construction $Z_{4i+2}$ is $f_i$-homogeneous, so $Z_{4i+2} \cap I$ is an element of $\mathsf{Cod}(M/I)$ that is $f$-homogeneous and infinite in $I$.

The stages of the form $s = 4i +3$ ensure that $(I, \mathsf{Cod}(M/I),\param^{\M},\Param^{\M} \cap I) \models T$ as for every $k \in I$, there exists an index $i$ such that $k < \min Z_{4i + 3}$ and therefore $\forall x < k \exists y < \min Z_{4i+4} \forall z < \max Z_{4i+4} \ \theta(x,y,z)$ holds. Replacing $\Param^{\M}$ by $\Param^{\M} \cap I$ inside $\theta(x,y,z)$ does not change its truth value for $x,y,z \in I$, hence $(I, \mathsf{Cod}(M/I),\param^{\M},\Param^{\M}\cap I) \models \forall x < k \exists y \forall z \ \theta(x,y,z)$. 
\end{proof}

\begin{reptheorem}{thm:generalized-parsons-p4-bsig2-largeness}[Generalized Parsons theorem for $\WKL_0 + \BSig_2 + T$]
Let~$\psi(x, y, F)$ be a $\Sigma_0$ formula with exactly the displayed free variables. Assume that 
$$
\WKL_0 + \BSig_2 + T \vdash \forall x \forall X(X \mbox{ is infinite } \rightarrow \exists F \finsub X \exists y \psi(x, y, F))
$$
Then there exists some~$n \in \omega$ such that $\ISig_1$ proves $\forall x \forall Z \finsub (x, \infty)$
$$
Z \mbox{ is } \bbomega^n\mbox{-large}(T) \mbox{ and exp-sparse } \rightarrow \exists F \subseteq Z \exists y < \max Z \psi(x, y, F)
$$
\end{reptheorem}

\begin{proof}
By contradiction, assume that for all $n \in \omega$, $\ISig_1$ does not prove 
$$\forall x \forall Z \finsub (x, \infty)[Z \mbox{ is } \bbomega^n\mbox{-large}(T) \mbox{ and exp-sparse } \rightarrow \exists F \subseteq Z \exists y < \max Z \psi(x, y, F)]
$$

There is a countable model $\M = (M,S) \models \ISig_1$, satisfying for every~$n \in \omega$
$$
\exists x \exists Z \finsub (x, \infty)\left[\begin{array}{l}Z \mbox{ is } \bbomega^{2^n}\mbox{-large}(T) \mbox{ and exp-sparse }\\ \mbox{and } \forall F \subseteq Z \forall y < \max Z \neg \psi(x, y, F)\end{array}\right]
$$

We can assume $M$ to be non-standard, so by overspill, there exists a $b \in M \setminus \omega$ such that 
$$M \models \exists x \exists Z \finsub (x, \infty)\left[\begin{array}{l}Z \mbox{ is } \bbomega^{2^b}\mbox{-large}(T) \mbox{ and exp-sparse }\\ \mbox{and } \forall F \subseteq Z \forall y < \max Z \neg \psi(x, y, F)\end{array}\right]$$

Let $c \in M$ and $Z \finsub (c, \infty)$ $\bbomega^{2^b}$-large$(T)$ be such that $\M \models \forall F \subseteq Z \forall y < \max Z \neg \psi(c, y, F)$.
By \Cref{prop:initial-segment-wkl-bsig2-t-largeness}, there is an initial segment $I$ of $M$ such that $(I, \mathsf{Cod}(M/I),\param^{\M},\Param^{\M} \cap I) \models \WKL_0 + \BSig_2 + T$ and $Z \cap I$ infinite in $I$. Therefore, $$(I, \mathsf{Cod}(M/I),\param^{\M},\Param^{\M} \cap I) \models (Z \cap I \mbox{ is infinite } \wedge \forall F \finsub Z \cap I \forall y \neg \psi(c,y,F))$$
This contradicts our assumption that
$$
\WKL_0 + \BSig_2 + T \vdash \forall x \forall X(X \mbox{ is infinite } \rightarrow \exists F \finsub X \exists y \psi(x, y, F))$$    
\end{proof}

\section{A framework for $\forall \Pi^0_4$ conservation}\label[section]{sect:framework-pi04}







We now develop a framework for proving that $\RT$-like statements are $\forall \Pi^0_4$ conservative over~$\RCA_0 + \BSig_2$. This will be divided into two sections : First, building up on the work of Kirby and Paris~\cite{Kirby1977InitialSO}, we prove in \Cref{sect:density-ramsey-like} that every $\RT$-like theorem~$\Gamma$ is $\forall \Pi^0_4$ conservative over~$\RCA_0 + \BSig_2 + $ a first-order theory stating the existence of dense$(\Gamma)$ sets. Then, in \Cref{sect:largeness-ramsey-like}, we use the quantitative notion of largeness defined in \Cref{sect:largeness-t} to prove the existence of dense$(\Gamma)$ sets, assuming the existence of a combinatorially simpler objects.

Given a function $f : [\NN]^n \to k$ and a finite set~$G = \{x_0 < \dots < x_{\ell-1}\} $, let $f_G : [\ell]^n \to k$ be defined by $f_G(i_0, \dots, i_{n-1}) = f(x_{i_0}, \dots, x_{i_{n-1}})$.

\begin{definition}[$\RT$-like formula]\label[definition]{def:ramsey-like-Pi12-formula}
Given~$n, k \in \omega$, an \emph{$\RT$-like formula} is a $\Pi_2^1$-formula of the form 
$$(\forall f : [\mathbb{N}]^n \to k)(\exists Y)(Y \mbox{is infinite} \wedge \Psi(f,Y))$$
where $\Psi(f,Y)$ is of the form $(\forall G \finsub Y) \Psi_0(f_G)$ with $\Psi_0$ a $\Delta^0_0$-formula.
\end{definition}

$\RT$-like statements were introduced and studied in the standard realm by Patey~\cite{patey2022ramseylike}, formulated in terms of forbidden patterns. The idea behind the formula $\Psi$ is that theorems from Ramsey theory are about structure and not the actual value of the domain. Thus, every solution is rewritten as a copy of the integers.

\begin{remark}
The notion of $\RT$-like statement is a particular case of Ramsey-like-$\Pi_2^1$-formula, introduced by defined by Patey and Yokoyama~\cite{patey2018proof}. It already encompasses Ramsey's theorem for pairs, the Ascending Descending Sequence principle and the Erd\H{o}s-Moser theorem. This section could have been formulated and proven in the more general setting of Ramsey-like-$\Pi_2^1$-formulas, with the cost of some heavier notations as in~\cite{patey2018proof}.
\end{remark}

Since an $\RT$-like statement does not distinguish the solutions from the set of the integers, it proves a slightly stronger formula, stating that the class of solutions is dense in the partial order $([\NN]^\NN, \subseteq)$. 

\begin{lemma}\label[lemma]{lem:rtnk-like-density}
Let $\Gamma \equiv (\forall f : [\mathbb{N}]^n \to k)(\exists Y)(Y \mbox{is infinite} \wedge \Psi(f,Y))$ be an $\RT$-like formula.
Then $\RCA_0 + \Gamma$ proves
$$(\forall f : [\mathbb{N}]^n \to k)(\forall X \mbox{ infinite})(\exists Y \subseteq X)(Y \mbox{is infinite} \wedge \Psi(f,Y))$$
\end{lemma}
\begin{proof}
Let $f : [\NN]^n \to k$ be a coloring and $X = \{ x_0 < x_1 < \dots \}$ be an infinite set.
Let $\hat{f} : [\NN]^n \to k$ be defined by $\hat{f}(i_0, \dots, i_{n-1}) = f(x_{i_0}, \dots, x_{i_{n-1}})$.
By $\Gamma$, there is an infinite set $\hat{Y} \subseteq \NN$ such that $\Psi(\hat{f}, \hat{Y})$.
Let~$Y = \{ x_s : s \in \hat{Y} \}$. We claim that $\Psi(f, Y)$ holds.

Say $\Psi(f, Y) \equiv (\forall G \finsub Y) \Psi_0(f_G)$ with $\Psi_0$ a $\Delta^0_0$-formula.
Let $G \finsub Y$, and let $\hat{G} = \{ s : x_s \in G \}$. 
Say $G = \{x_{s_0}, \dots, x_{s_{\ell-1}} \}$. Then 
$$f_G(i_0, \dots, i_{n-1}) = f(x_{s_{i_0}}, \dots, x_{s_{i_{n-1}}}) = \hat{f}(s_{i_0}, \dots, s_{i_{n-1}}) = \hat{f}_{\hat{G}}(i_0, \dots, i_{n-1})$$
hence $f_G = \hat{f}_{\hat{G}}$. Since $\Psi(\hat{f}, \hat{Y})$ and $\hat{G} \finsub \hat{Y}$, then $\Psi_0(\hat{f}_{\hat{G}})$ holds, then so does $\Psi_0(f_G)$. Since it holds for every~$G \finsub Y$, then $\Psi(f, Y)$ holds.
\end{proof}


\subsection{Density$(T)$ for $\RT$-like statements}\label[section]{sect:density-ramsey-like}

There exists a very general way to characterize the first-order part of a second-order theory, using the notion of density, originally defined by Kirby and Paris~\cite{Kirby1977InitialSO}. This was later adapted by Bovykin and Weiermann~\cite[Theorem 1]{bovykin2017strength} to prove $\Pi^0_2$ conservation of combinatorial theorems, and generalized by Patey and Yokoyama~\cite[Theorem 3.4]{patey2018proof} for~$\forall \Pi^0_3$ conservation.

\begin{definition}\label[definition]{def:density}
Fix an $\RT$-like statement $$\Gamma = (\forall f : [\mathbb{N}]^n \to k)(\exists Y)(Y \mbox{is infinite} \wedge \Psi(f,Y)).$$

We define inductively the notion of $m\mbox{-density}(T, \Gamma)$ of a finite set $Z \subseteq \mathbb{N}$ as follows. A set $Z$ is $0\mbox{-dense}(T, \Gamma)$ if it is $\bbomega\mbox{-large}(T)$ and a set $Z$ is $(m+1)\mbox{-dense}(T, \Gamma)$ if

\begin{itemize}
    \item[(a)] for any $f : [Z]^n \to k$, there is an $m\mbox{-dense}(T, \Gamma)$ set $Y \subseteq Z$ such that $\Psi(f,Y)$ holds, and,
    \item[(b)] for any partition $Z_0 \sqcup \dots \sqcup Z_{\ell-1} = Z$ such that $\ell \leq Z_0 < \dots < Z_{\ell-1}$, one of the $Z_i$'s is $m\mbox{-dense}(T, \Gamma)$. 
    \item[(c)] for any $f : Z \to \min Z$, there is an $m\mbox{-dense}(T, \Gamma)$ set $Y \subseteq Z$ which is $f$-homogeneous
    \item[(d)] there is an $m\mbox{-dense}(T, \Gamma)$ set $Y \subseteq Z$ such that
    $$\forall x < \min Z \exists y < \min Y \forall z < \max Y\ \theta(x, y, z)$$
\end{itemize}  
\end{definition}

As for largeness$(T)$, we require that any $m$-dense$(T, \Gamma)$ set $X$ satisfies $\min X \geq \max \{3,\param\}$.

\begin{remark}\label[remark]{rem:density}\ 
\begin{itemize}
    \item[(1)] In the definition above, item (a) is an indicator for $\Gamma$, item (b) for $\WKL_0$, item (c) for~$\RT^1$ and item (d) for~$T$. Strictly speaking, this notion corresponds to $m\mbox{-density}(\Gamma, \WKL_0, \RT^1, T)$, but for simplicity of notation, we only made explicit the varying parameters.
    \item[(2)] Note that item (b) follows from item (c). However, for clarity and explicitness of the indicators, we kept both items.
    \item[(3)] Also note that there is a hidden use of $\BSig_2$ in item (d).
Indeed, one assume that every~$x < \min Z$ will have $\min Y$ as a uniform bound.
One could have modified this item to require that for every~$x < \min Z$, there is an $m\mbox{-dense}(T,  \Gamma)$ set $Y \subseteq Z$ and some~$y < \min Y$
such that $\forall z < \max Z\ \theta(x, y, z)$.
\end{itemize}
\end{remark}

We now prove the core combinatorial lemma which relates the notion of density to the existence of a cut satisfying the desired properties.

\begin{lemma}\label[lemma]{prop:initial-segment-wkl-bsig2-t-gamma-density}
Let $\Gamma$ be an $\RT$-like statement. Given a countable nonstandard model $\M \models \ISig_1$ and an $M$-finite set $Z \subseteq M$ which is $c\mbox{-dense}(T, \Gamma)$ for some $c \in M \setminus \omega$, then there exists an initial segment $I$ of $M$ such that $(I, \mathsf{Cod}(M/I),\param^{\M},\Param^{\M} \cap I) \models \WKL_0 + \BSig_2 + \Gamma + T$ and $I \cap Z$ is infinite in $I$. 
\end{lemma}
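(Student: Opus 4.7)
The plan is to directly generalize the proof of \Cref{prop:initial-segment-wkl-bsig2-t-largeness}, replacing the $\omega^{2^{c-s}}$-largeness$(T)$ invariant with $(c-s)$-density$(T, \Gamma)$. We build a decreasing sequence $Z = Z_0 \supseteq Z_1 \supseteq \dots$ of $M$-finite sets with $Z_s$ being $(c-s)$-dense$(T,\Gamma)$; this is maintained for every standard $s$ since $c \in M \setminus \omega$. The cut is then $I = \sup\{\min Z_s : s \in \omega\}$, as before.

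I would prepare three enumerations repeating every object infinitely often: $(E_i)_{i\in\omega}$ of $M$-finite sets, $(f_i)_{i\in\omega}$ of $M$-finite functions with range strictly below the minimum of their domain, and $(g_i)_{i\in\omega}$ of $M$-finite $n$-ary $k$-colorings for the fixed $n, k$ of $\Gamma$. Stages cycle through five kinds, each invoking one clause of \Cref{def:density}. At $s = 5i$, if $|E_i| < \min Z_s$, apply clause (c) to the indicator coloring $\mathbf{1}_{E_i}: Z_s \to 2$; the resulting $f$-homogeneous dense subset $Z_{s+1}$ must have color $0$, since color $1$ would give $Z_{s+1} \subseteq E_i$ contradicting $|Z_{s+1}| \geq \min Z_{s+1} > |E_i|$ (using that density implies $\omega$-largeness). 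At $s = 5i+1$, if the range of $f_i$ is below $\min Z_s$ and $Z_s$ lies in its domain, clause (c) produces a dense $f_i$-homogeneous subset. At $s = 5i+2$, put $Z_{s+1} = Z_s \setminus \{\min Z_s\}$, which is dense by clause (b) applied to the two-block partition $\{\min Z_s\} \sqcup (Z_s \setminus \{\min Z_s\})$ (legitimate since $2 \leq 3 \leq \min Z_s$ and the singleton is not $\omega$-large). At $s = 5i+3$, clause (d) yields a dense $Z_{s+1}$ with uniform $T$-witnesses for inputs $x < \min Z_s$. At $s = 5i+4$, clause (a) applied to $g_i \uh [Z_s]^n$ gives a dense $Z_{s+1}$ with $\Psi(g_i, Z_{s+1})$.

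The verification that $(I, \mathsf{Cod}(M/I), a^\M, A^\M \cap I) \models \WKL_0 + \BSig_2 + T$, that $a \in I$, and that $Z \cap I$ is infinite in $I$ mirrors \Cref{prop:initial-segment-wkl-bsig2-t-largeness} essentially verbatim. For the new $\Gamma$ component: any coloring $f : [I]^n \to k$ in $\mathsf{Cod}(M/I)$ is of the form $g \cap [I]^n$ for some $M$-finite $g$. Pick an index $i$ with $g_i = g$ and $\min Z_{5i+4}$ large enough that $Z_{5i+4}$ lies inside the domain of $g$ (possible by infinite repetition of $g$). The stages $5j+2$ with $j > i$ produce cofinally many $\min Z_t$ in $Z_{5i+5} \cap I$, so $Z_{5i+5} \cap I$ is infinite in $I$. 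Since $\Psi(g_i, Z_{5i+5})$ unfolds as $\forall G \finsub Z_{5i+5}\, \Psi_0(g_i \uh [G]^n, G)$, and any $G \finsub (Z_{5i+5} \cap I)$ is also $G \finsub Z_{5i+5}$, this universal statement transfers to $\Psi(f, Z_{5i+5} \cap I)$ inside $(I, \mathsf{Cod}(M/I))$.

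The main technical point is to verify that each of the five operations decrements density by exactly one, which is precisely the content of clauses (a)--(d) of \Cref{def:density}; clause (c) conveniently plays a double role, handling both the semi-regularity requirement (via two-valued indicator colorings) and the $\RT^1$-style $\BSig_2$ requirement. There is no deep new combinatorial obstacle beyond careful bookkeeping of these density-preservation properties, the $\Gamma$-step being a clean addition on top of the four-stage Kirby--Paris-style construction of \Cref{prop:initial-segment-wkl-bsig2-t-largeness}.
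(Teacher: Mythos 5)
Your proposal follows the same overall Kirby--Paris architecture as the paper (five-stage cycle, cut $I = \sup\{\min Z_s\}$, density decrement by one at each step), and your treatment of the $\Gamma$-step, the $T$-step, the $\RT^1$-step, the min-removal step, and the transfer of $\Psi$ to $(I,\mathsf{Cod}(M/I))$ are all fine. However, there is a genuine gap at the semi-regularity stage ($s=5i$), where you deviate from the paper's proof.

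You apply clause~(c) of \Cref{def:density} to the indicator coloring $\mathbf{1}_{E_i}\colon Z_s \to 2$, obtaining a dense $Z_{s+1}\subseteq Z_s$ with $Z_{s+1}\cap E_i=\emptyset$. Mere disjointness is not enough to conclude semi-regularity. For semi-regularity you need $E_i\cap I$ bounded in $I$, and the argument is that $E_i\cap I\subseteq[0,\min Z_{s+1}]$ because $I\subseteq[0,\max Z_{s+1})$; but this last containment only gives bounds once you know $E_i\cap(\min Z_{s+1},\max Z_{s+1})=\emptyset$, not just $E_i\cap Z_{s+1}=\emptyset$. Elements of $E_i$ can sit in the gaps of $Z_{s+1}$ --- numbers strictly between $\min Z_{s+1}$ and $\max Z_{s+1}$ that are not in $Z_{s+1}$ --- and nothing in your stage rules out such elements being cofinal in $I$. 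In that case $\{m : e_m \in I\}$ could be a proper cut of $\{1,\dots,|E_i|\}$ with no maximum and $E_i\cap I$ would be unbounded. The paper avoids this by invoking clause~(b) with the partition of $Z_{5i}$ into the intervals determined by $E_i=\{e_0<\dots<e_{l-1}\}$, namely $Z_{5i}\cap[0,e_0)$, $Z_{5i}\cap[e_{j-1},e_j)$, $Z_{5i}\cap[e_{l-1},\infty)$. Because the chosen dense piece $Z_{5i+1}$ lies entirely inside one $E_i$-interval, one gets the strictly stronger property $E_i\cap(\min Z_{5i+1},\max Z_{5i+1}]=\emptyset$, which is exactly what the semi-regularity argument needs. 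So you should replace your indicator-coloring step by the $E_i$-interval partition and clause~(b) as in the paper; with that change the rest of your argument goes through.
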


\begin{proof}
Let $\Gamma$ be an $\RT$-like statement of the form
$$(\forall f : [\mathbb{N}]^n \to k)(\exists Y)(Y \mbox{is infinite} \wedge \Psi(f,Y))$$

where $k,n \in \omega$ and $\Psi$ in the form of \Cref{def:ramsey-like-Pi12-formula}

Let $(E_i)_{i \in \omega}$ be an enumeration containing all the $M$-finite sets infinitely many times each, let $(f_i)_{i \in \omega}$ be an enumeration of all the $M$-finite functions from $[\{0, \dots, \max Z\}]^n$ to $k$ and let $(g_i)_{i \in \omega}$ be an enumeration containing infinitely many times all the $M$-finite functions from $\{0, \dots, \max Z\}$ to a $l$ where $l < \max Z$.

We will build a decreasing sequence of sets $Z = Z_0 \supseteq Z_1 \supseteq \dots$ such that $Z_i$ will be $(c-i)\mbox{-dense}(T, \Gamma)$. \\

At stages of the form $s = 5i$, let $Z_{5i}$ be given. If $\min Z_{5i} \leq |E_i|$, then keep $Z_{5i+1} = Z_{5i}$. If $\min Z_{5i} > |E_i|$, let $E_i = \{e_0, \dots, e_{l-1}\}$ where $e_0 < \dots < e_{l-1}$ and put $Z_{5i}^0 = Z_{5i} \cap [0, e_0)_{M}$, $Z_{5i}^j = Z_{5i} \cap [e_{j-1}, e_{j})_{M}$ for $1 \leq j < l - 1$ and $Z_{5i}^l = Z_{5i} \cap [e_{l-1}, \infty)_{M}$. Then $Z_{5_i} = Z_{5i}^0 \sqcup \dots \sqcup Z_{5i}^l$, thus one of the $Z_{5i}^j$ for $j \leq l$ is $(c - 5i - 1)\mbox{-dense}(T, \Gamma)$. Put $Z_{5i+1}$ to be such a $Z_{5i}^j$.

At stages of the form $s = 5i + 1$, let $Z_{5i+1}$ be given. Let $l$ be the range of $g_i$, if $\min Z_{5i+1} < l$, then keep $Z_{5i+2} = Z_{5i+1}$. If $\min Z_{5i+1} > l$ then $g_i$ induce a coloring $\tilde{g_i} : Z_{5i+1} \to \min Z_{5i+1}$ and let $Z_{5i+2}$ be an $(c- 5i-2)\mbox{-dense}(T, \Gamma)$ $\tilde{g_i}$-homogeneous subset of $Z_{5i+1}$. 

At stages of the form $s = 5i+2$, let $Z_{5i+2}$ be given and put $Z_{5i+3} = Z_{5i+2} \setminus \{\min Z_{5i+2}\}$. 

At stages of the form $s = 5i +3$, let $Z_{5i+3}$ be given and let $Z_{5i+4}$ be a $(c- 5i-4)\mbox{-dense}(T, \Gamma)$ subset of $Z_{5i+3}$ such that
$$\forall x < \min Z_{5i+3} \exists y < \min Z_{5i+4} \forall z < \max Z_{5i+4} \ \theta(x, y, z)$$

At stages if the form $s = 5i + 4$, let $Z_{5i+4}$ and $f_i$ be given. Let $Z_{5i+5} \subseteq Z_{5i+4}$ satisfying $\Psi(f_i,Z_{5i+5})$. \\

Finally, let $I = \sup\{\min Z_i | i \in \omega\}$. \\

The stages of the form $s = 5i$ ensure that $I$ is a semi-regular cut and therefore $(I, \mathsf{Cod}(M/I)) \models \WKL_0$. 

The stages of the form $s = 5i+2$ ensure that $Z_i \cap I$ is infinite in $I$ for every $i$ (and in particular $Z \cap I$ is infinite in $I$). 

The stages of the form $s = 5i+1$ ensure that $(I, \mathsf{Cod}(M/I)) \models \RT^1$ (and therefore $\BSig_2$): let $g : I \to k \in \mathsf{Cod}(M/I)$, there exists an index $i \in \omega$ such that $g = g_i \cap I$ (and therefore $g = g_i \uh I$) and since $k \in I$, we can take the $i$ to be large enough for $\min Z_{5i+1}$ to be bigger than $k$ (Since every such function appears infinitely many times in the enumeration). By construction $Z_{5i+2}$ is $g_i$-homogeneous, so $Z_{5i+2} \cap I$ is an element of $\mathsf{Cod}(M/I)$ that is $g$-homogeneous and infinite in $I$.

The stages of the form $s = 5i +3$ ensure that $(I, \mathsf{Cod}(M/I),\param^{\M},\Param^{\M} \cap I) \models T$ as for every $k \in I$, there exists an index $i$ such that $k < \min Z_{5i + 3}$ and therefore $\forall x < k \exists y \forall z < \max Z_{5i + 4} \ \theta(x,y,z)$, so $(I, \mathsf{Cod}(M/I),\param^{\M},\Param^{\M} \cap I) \models \forall x < k \exists y \forall z \ \theta(x,y,z)$ (since $\max Z_{5i+4} > I$), so $(I, \mathsf{Cod}(M/I)) \models T$.

The stages of the form $s = 5i + 4$ ensure that $(I, \mathsf{Cod}(M/I)) \models \Gamma$: Let $f : [I]^n \to k \in \mathsf{Cod}(M/I)$, there exists an index $i \in \omega$ such that $f = f_i \cap I$ (and therefore $f = f_i \uh I$). By construction, $M \models \Psi(f_i, Z_{5i+5})$ and thus $(I, \mathsf{Cod}(M/I)) \models \Psi(f, Z_{5i+5} \cap I)$ and since $Z_{5i+5} \cap I$ is infinite in $I$ we have $(I, \mathsf{Cod}(M/I)) \models \Gamma$.
\end{proof}

We can now define a Paris-Harrington-like principle to which our theorem will be $\forall \Pi^0_4$ conservative over~$\RCA_0 + \BSig_2$.

\begin{definition}[Paris-Harrington principle for density$(T)$]
Fix a $\Sigma^0_0$-formula $\theta(X \uh_z, t, x, y, z)$ with exactly the displayed free variables and some $n \in \omega$. Let~\emph{$n$-PH${}_\theta$($\Gamma$)} be the statement 

\qt{For every~$\param, b$ and every set $\Param$, if $\forall x \exists y \forall z \theta(\Param\uh_z, \param, x, y, z)$ then there is an $n$-dense$(\forall x \exists y \forall z \theta(\Param \uh_z, \param, x, y, z), \Gamma)$ set $X$ such that~$\min X > b$.}
\end{definition}

The following proposition shows that this Paris-Harrington-like statement can be actually proven by~$\Gamma$, with the help of compactness.

\begin{proposition}\label[proposition]{prop:ph-density}
Fix a $\Sigma^0_0$-formula $\theta(X \uh_z, t, x, y, z)$ with exactly the displayed free variables.
Then for every~$n \in \omega$, $\WKL_0 + \BSig_2 + \Gamma \vdash n\mbox{-}\mathsf{PH}_\theta(\Gamma)$.
\end{proposition}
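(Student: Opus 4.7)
The plan is to argue by external induction on $n \in \omega$, working throughout inside $\WKL_0 + \BSig_2 + \Gamma$. For the base case $n = 0$, the hypothesis of $n\mbox{-}\mathsf{PH}_\theta(\Gamma)$ gives $T \equiv \forall x \exists y \forall z\, \theta(A \uh_z, a, x, y, z)$, and a $0$-dense$(T, \Gamma)$ set is by definition an $\omega$-large$(T)$ set (whose minimum is at least $\max(3, a)$). \Cref{prop:largeness-bsig2-largeness}, applied to the infinite interval $(b, \infty)$, then directly produces such a witness.

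For the inductive step, I would assume $n\mbox{-}\mathsf{PH}_\theta(\Gamma)$ is provable, and fix $a, b, A$ with $T$. The plan is to construct, by an infinitary Mathias-style iteration in the metatheory, a decreasing sequence $W_0 = (b, \infty) \supseteq W_1 \supseteq W_2 \supseteq \cdots$ of infinite sets, refining at each stage according to an enumeration of the four requirements from \Cref{def:density}. At an item-(a) stage I would apply $\Gamma$ to the next coloring $f : [\NN]^n \to k$ in some fixed enumeration, shrinking $W_s$ to an infinite subset on which $\Psi(f, \cdot)$ holds. At items-(b)-and-(c) stages I would apply $\RT^1$ (available from $\BSig_2$) to the next partition or function with range below $\min W_s$. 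At an item-(d) stage I would refine using $T$ and $\BSig_2$ to enforce $T$-apartness, exactly as in the proof of \Cref{prop:largeness-bsig2-largeness}. A diagonal infinite subset $W \subseteq \bigcap_s W_s$ then yields an infinite set pre-conditioned for all subsequent requirements.

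By the inductive hypothesis, $W$ contains $n$-dense$(T, \Gamma)$ subsets above $b$ of any desired quality. I would then verify that a sufficiently rich such subset $X \subseteq W$ is $(n+1)$-dense$(T, \Gamma)$: for each of items (a)--(d), the pre-conditioning of $W$ supplies a subset of $X$ satisfying the required combinatorial property (the $\Psi(f, \cdot)$-witness for (a), a large homogeneous piece for (b) and (c), a $T$-apart piece for (d)), and a second invocation of $n\mbox{-}\mathsf{PH}_\theta(\Gamma)$ inside that subset produces the $n$-dense$(T, \Gamma)$ sub-subset demanded by \Cref{def:density}. The main obstacle is to arrange the iteration so that a single finite $X$ simultaneously witnesses all four density conditions rather than requiring distinct $X$'s for each one; this is handled via the compactness provided by $\WKL_0$, which ensures that the finitely many colorings, partitions, and functions that can appear on any fixed $X \subseteq W$ are all absorbed into the construction of $W$ in advance.
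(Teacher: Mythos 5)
Your general plan (external induction, use $\WKL_0$ compactness) points in the right direction, but the execution has real gaps that the paper's proof avoids. First, your inductive hypothesis is $n\mbox{-}\mathsf{PH}_\theta(\Gamma)$ itself, which only asserts that $n$-dense$(T,\Gamma)$ sets exist above any bound. The paper instead strengthens the induction to \qt{$n$-density$(T,\Gamma)$ is a largeness notion}, i.e.\ every infinite set contains an $n$-dense$(T,\Gamma)$ subset. This is crucial: to verify items (a)--(d) of \Cref{def:density} for a candidate finite $X$, you need $(n{-}1)$-dense subsets \emph{inside} various pieces of $X$, not merely somewhere above a bound. Since $n$-density is not evidently upward closed (items (b) and (c) quantify over partitions/colorings of the whole set), your weaker hypothesis does not obviously supply what you need.

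Second, the infinitary Mathias-style construction $W_0 \supseteq W_1 \supseteq \cdots$ with a \qt{diagonal} $W \subseteq \bigcap_s W_s$ is not a proof step available inside the model $\M \models \WKL_0 + \BSig_2 + \Gamma$: it is an $\omega$-length iteration with no uniform bound, and $\bigcap_s W_s$ is not a set of the model (nor is its infinitude guaranteed). You appear to be importing the cut-construction style of \Cref{prop:initial-segment-wkl-bsig2-t-gamma-density}, which takes place in a nonstandard metatheoretic argument, into a context where you must produce an object definable in the model. Relatedly, your \qt{second invocation of $n\mbox{-}\mathsf{PH}_\theta(\Gamma)$ inside that subset} is ill-typed: $n\mbox{-}\mathsf{PH}_\theta(\Gamma)$ is about unboundedness of $n$-dense sets in $\NN$, not about producing $n$-dense subsets of a given finite set.

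The paper sidesteps both issues by fixing a single infinite $Y$ and applying $\WKL_0$ compactness three times to find finite depths $d_0, d_1, d_2$: (i) by \Cref{lem:compactness-big2-wkl}, beyond depth $d_0$ every coloring on $[Y \cap [0,d_0)]^n$ admits an $(n{-}1)$-dense $\Psi$-witness inside $Y \cap [0,d_0)$; (ii) the tree of bad partitions is well-founded by the strengthened inductive hypothesis plus $\BSig_2$, giving $d_1$; (iii) $\BSig_2$ and $T$ give a uniform bound and the inductive hypothesis then gives $d_2$. Then $Y \cap [0, \max(d_0,d_1,d_2))$ simultaneously satisfies (a)--(d). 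Your closing sentence names this obstacle but does not actually carry out the resolution; in particular the compactness must be applied to the \emph{strengthened} induction claim, not to $n$-PH.
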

\begin{proof}
Write $T(X,t) \equiv \forall x \exists y \forall z \theta(X \uh_z,t,x,y,z)$ and $\Gamma \equiv (\forall f : [\mathbb{N}]^n \to k)(\exists Y)(Y $ is infinite $ \wedge \Psi(f,Y))$.

Fix $\param$ and $\Param$ a set and assume $T(\Param,\param)$. By external induction on $n \in \omega$, we will prove the stronger property that \qt{$n$-density$(T(\Param,\param), \Gamma)$ is a largeness notion}.

Case $n = 0$. By \Cref{prop:largeness-bsig2-largeness}, $\RCA_0 + \BSig_2 + T(\Param,\param)$ proves that $\bbomega\mbox{-largeness}(T(\Param,\param))$ is a largeness notion. 

Case $n > 0$. Suppose the property to be true at rank $n-1$ and fix $Y$ an infinite set. By the standard compactness argument (see \Cref{lem:compactness-big2-wkl}), available within $\WKL_0 + \Gamma$, and by \Cref{lem:rtnk-like-density}, there is a depth $d_0$ such that for every $f : [[0,\dots, d_0) \cap Y]^n \to k$ there exists a $Z \subseteq [0, d_0) \cap Y$ such that $\Psi(f,Z)$ and $Z$ is $(n-1)$-dense$(T(\Param,\param), \Gamma)$.

Let~$\ell = \min Y$. Consider the tree of all $\ell$-partition of $Y$ such that no elements of the partition contains a $(n-1)$-dense$(T(\Param,\param), \Gamma)$ set. By the inductive hypothesis and $\BSig_2$, this tree has no infinite branch, so, by $\WKL_0$, there is a depth $d_1$ such that for every partition $Z_0 \sqcup \dots \sqcup Z_{\ell-1} = Y \cap [0, \dots, d_1)$, one of the $Z_i$ is $(n-1)$-dense$(T(\Param,\param), \Gamma)$.

By $\BSig_2$ and our assumption that $\forall x \exists y \forall z \theta(\Param\uh_z,\param,x,y,z)$, there exists a bound $b$ such that $\forall x < \min Y \exists y < b \forall z \theta(\Param\uh_z,\param,x,y,z)$. By the inductive hypothesis, there exists some $d_2$ such that $Y \cap [b, b+1, \dots, d_2)$ is $(n-1)$-dense$(T(\Param,\param), \Gamma)$. 

Finally, take $d = \max \{d_0,d_1,d_2\}$, the set $Y \cap [0, \dots, d-1)$ is $n$-dense$(T(\Param,\param), \Gamma)$ by definition of $d_0,d_1,d_2$.
We conclude by induction.
\end{proof}

We are now ready to prove the general theorem which relates $\Gamma$ to its corresponding Paris-Harrington-like theory.

\begin{theorem}\label[theorem]{thm:ph-conservative}
$\WKL_0 + \BSig_2 + \Gamma$ is $\forall \Pi^0_4$ conservative over~$\RCA_0 + \{ n\mbox{-}\mathsf{PH}_\theta(\Gamma) : n \in \omega, \theta \in \Sigma^0_0 \}$
\end{theorem}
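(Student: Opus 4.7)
The plan is to prove the contrapositive. Suppose $\varphi \equiv \forall A\, \forall a\, \exists x\, \forall y\, \exists z\, \zeta(A\uh_z, a, x, y, z)$ is a $\forall\Pi^0_4$ sentence (with $\zeta \in \Sigma^0_0$) that is \emph{not} provable from $\RCA_0 + \{n\mbox{-}\mathsf{PH}_\theta(\Gamma) : n \in \omega, \theta \in \Sigma^0_0\}$; I want to show that it is also not provable from $\WKL_0 + \BSig_2 + \Gamma$. First I would pick a countable non-standard model $\M = (M, S)$ of $\RCA_0 + \{n\mbox{-}\mathsf{PH}_\theta(\Gamma) : n \in \omega, \theta \in \Sigma^0_0\} + \neg\varphi$, and extract witnesses $A \in S$ and $a \in M$ such that $\M \models \forall x\, \exists y\, \forall z\, \theta(x, y, z)$, where I define $\theta(x, y, z) := \neg\zeta(A\uh_z, a, x, y, z)$. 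Setting $T := \forall x\, \exists y\, \forall z\, \theta(x, y, z)$, the model $\M$ satisfies $T$ in the enriched language with constants for $A$ and $a$, matching the setup of \Cref{sect:largeness-t}.

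Next I would note that for each standard $n$, the axiom $n\mbox{-}\mathsf{PH}_\theta(\Gamma)$ applied to the parameters $a$, $b = 0$, $A$ produces an $n$-dense$(T, \Gamma)$ set inside $\M$. To push this fact to a non-standard index, I would formalize the predicate \emph{``$X$ is $m$-dense$(T, \Gamma)$''} inside $\ISig_1$ (with parameter $A$) as a $\Delta^0_1$ predicate of $(m, X)$. Inspecting \Cref{def:density}, every quantifier occurring in clauses (a)--(d) is bounded: it ranges over subsets of $X$, colorings $f : [X]^n \to k$, or colorings $X \to \min X$, and the condition $\Psi(f, Y)$ from \Cref{def:ramsey-like-Pi12-formula} is bounded once $Y$ is finite; meanwhile $\theta$ is $\Sigma^0_0$ with parameter $A$. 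Consequently the recursion on $m$ is a course-of-values recursion of $\Sigma^0_0$ character, available in $\ISig_1$. Applying overspill to the resulting formalization, I obtain a non-standard $c \in M$ and an $M$-finite set $Z \in \M$ that $\M$ believes to be $c$-dense$(T, \Gamma)$.

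Finally I would invoke \Cref{prop:initial-segment-wkl-bsig2-t-gamma-density} on $\M$, $Z$, and $c$ to produce an initial segment $I$ of $M$ with
\[
(I, \mathsf{Cod}(M/I), a^{\M}, A^{\M} \cap I) \models \WKL_0 + \BSig_2 + \Gamma + T.
\]
Since $T$ unfolds in this cut to $\forall x\, \exists y\, \forall z\, \neg\zeta((A \cap I)\uh_z, a, x, y, z)$, the cut witnesses the failure of $\varphi$ with parameters $A \cap I$ and $a$. Hence $\WKL_0 + \BSig_2 + \Gamma \nvdash \varphi$, as required.

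The main obstacle I expect is the overspill step, which relies on the bounded-arithmetic formalization of $m$-density$(T, \Gamma)$: one must check that the recursive definition really does stay within $\Sigma^0_0$ (modulo the parameter $A$), so that standard-indexed instances transfer to non-standard indices. Once this is granted, the argument is a clean synthesis of the Paris--Harrington-style axioms (which supply density sets at every standard index) with the cut-construction lemma (which turns a non-standard-indexed density set into a model of $\WKL_0 + \BSig_2 + \Gamma + T$).
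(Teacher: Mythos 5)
Your proposal is correct and follows essentially the same route as the paper: both arguments take the contrapositive, produce a $c$-dense$(T,\Gamma)$ set for some nonstandard $c$ in a countable model of the Paris--Harrington axioms plus the negation of the target sentence, and then invoke \Cref{prop:initial-segment-wkl-bsig2-t-gamma-density} to build the cut witnessing non-provability from $\WKL_0 + \BSig_2 + \Gamma$. The only (cosmetic) difference is that you obtain the nonstandard index by $\Sigma^0_1$-overspill on the internally formalized density predicate, whereas the paper adds a fresh constant $c$ with the axioms $\{c > n : n \in \omega\}$ and the single axiom $c\mbox{-}\mathsf{PH}_{\neg\eta}(\Gamma)$ and appeals to compactness; both devices presuppose exactly the $\Delta^0_1$ formalization of $m$-density that you correctly identify as the technical point, and note that the paper also records the extension direction ($\WKL_0+\BSig_2+\Gamma \vdash n\mbox{-}\mathsf{PH}_\theta(\Gamma)$, via \Cref{prop:ph-density}) as part of the theorem.
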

\begin{proof}
Assume $\RCA_0 + \{ n\mbox{-}\mathsf{PH}_\theta(\Gamma) : n \in \omega, \theta \in \Sigma^0_0 \} \not \vdash \forall X \forall t \exists x \forall y \exists z \eta(X\uh_z,t,x,y,z)$ with $\eta$ a $\Sigma_0^0$ formula. By compactness, completeness and the Löwenheim-Skolem theorem, there exists some countable model $$\M = (M,S) \models \RCA_0 + c\mbox{-}\mathsf{PH}_{\neg \eta}(\Gamma) + \{c > n : n \in \omega \} + \forall x \exists y \forall z \neg \eta(\Param \uh_z,\param,x,y,z)$$ where $\Param \in S, \param \in M$ and $c$ is a new constant symbol.

$\M \models \forall x \exists y \forall z \neg \eta(\Param,\param,x,y,z) + c\mbox{-}\mathsf{PH}_{\neg \eta}(\Gamma)$ so there exists a $c$-dense$(T(\Param,\param), \Gamma)$ set $X$, where $T(\Param,\param) \equiv \forall x \exists y \forall z \neg \eta(\Param\uh_z,\param,x,y,z)$.

By \Cref{prop:initial-segment-wkl-bsig2-t-gamma-density}, there exists an initial segment $I$ of $M$ such that $(I, \mathsf{Cod}(M/I)) \models \WKL_0 + \BSig_2 + \Gamma + T(\Param \cap I, \param)$. Therefore $\WKL_0 + \BSig_2 + \Gamma \not \vdash \forall X \forall t \exists x \forall y \exists z \eta(X\uh_z,t,x,y,z)$. \\

The converse follows from \Cref{prop:ph-density}.

\end{proof}

\subsection{Largeness$(T)$ for $\RT$-like statements}\label[section]{sect:largeness-ramsey-like}

Thanks to \Cref{thm:ph-conservative}, our main theorem is reduced to proving $n\mbox{-}\mathsf{PH}_\theta(\RT^2_2)$ over~$\RCA_0 + \BSig_2$ for every~$n \in \omega$ and every $\Sigma^0_0$ formula~$\theta$. Unfolding the definitions, given a set~$\Param$ and $\param$, letting~$T(\Param,\param) \equiv \forall x \exists y \forall z \theta(\Param \uh_z, \param, x, y, z)$, the goal is to prove over~$\RCA_0 + \BSig_2 + T(\Param,\param)$ the existence of $n$-dense$(T(\Param,\param), \RT^2_2)$ sets. However, the notion of $n$-density is not easy to manipulate. Thanks to our generalized Parsons theorem for provability over~$\WKL_0 + \BSig_2 + T$, one can handle one application of~$\RT^2_2$ at a time, but in return the solution has to be sufficiently large in the sense of $\bbomega^n$-largeness$(T)$.

\begin{definition}
Fix $r,s \in \omega$ and an $\RT$-like-$\Pi^1_2$-statement $\Gamma \equiv (\forall f : [\mathbb{N}]^n \to k)(\exists Y)(Y \mbox{is infinite} \wedge \Psi(f,Y))$. A set $Z \subseteq_{fin} \mathbb{N}$ is said to be $\bbomega^r\cdot s\mbox{-large}(T,\Gamma)$ if for any $f : [Z]^n \to k$, there is an $(\bbomega^r\cdot s)\mbox{-large}(T)$ set $Y \subseteq Z$ such that $\Psi(f, Y)$ holds.
\end{definition}


The following proposition relates density to largeness:

\begin{proposition}\label[proposition]{prop:largeness-to-density}
Let $\Gamma$ be an $\RT$-like statement and $T$ be a $\Pi^0_3$ formula.
Suppose $\bbomega^n$-largeness$(T, \Gamma)$ is a largeness notion provably in~$\RCA_0 + \BSig_2 + T$ for every~$n \in \omega$.
Then $n$-density$(T,\Gamma)$ is a largeness notion provably in~$\RCA_0 + \BSig_2 + T$ for every~$n \in \omega$.
\end{proposition}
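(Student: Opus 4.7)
The proof proceeds by external induction on $n$. The base case $n=0$ is immediate, since $0$-density$(T,\Gamma)$ unfolds to $\omega$-largeness$(T)$, which is a largeness notion by \Cref{prop:largeness-bsig2-largeness}.

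For the inductive step, assume $n$-density$(T,\Gamma)$ is a largeness notion provably in $\RCA_0+\BSig_2+T$. For each fixed external $n$, the predicate \qt{$F$ is $n$-dense$(T,\Gamma)$} is $\Sigma_0$ (its recursive unfolding only adds bounded quantifiers once $\theta$ and $\Psi_0$ are fixed), so I can apply the generalized Parsons theorem (\Cref{thm:generalized-parsons-p4-bsig2-largeness}) to the provable sentence that every infinite set has such a subset. This produces some $m \in \omega$ such that $\ISig_1$ proves that every exp-sparse $\omega^m$-large$(T)$ set contains an $n$-dense$(T,\Gamma)$ subset, and therefore (by closure of any largeness notion under supersets) is itself $n$-dense$(T,\Gamma)$.

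The key claim is that every exp-sparse $\omega^{2m+1}$-large$(T,\Gamma)$ set $Z$ is $(n+1)$-dense$(T,\Gamma)$. First note that $Z$ is itself $\omega^{2m+1}$-large$(T)$: feed any constant coloring to the $\Gamma$-largeness to obtain an $\omega^{2m+1}$-large$(T)$ subset, then close under supersets. I verify the four clauses of \Cref{def:density} in turn. For (a), given $f : [Z]^n \to k$, the definition of $\omega^{2m+1}$-large$(T,\Gamma)$ delivers some $\omega^{2m+1}$-large$(T)$ subset $Y \subseteq Z$ with $\Psi(f,Y)$; since $Y$ is exp-sparse and $\omega^m$-large$(T)$, $Y$ is $n$-dense$(T,\Gamma)$ by the Parsons bound, while $\Psi(f,Y)$ is preserved. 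For (b), given a partition $Z_0 \sqcup \dots \sqcup Z_{\ell-1} = Z$ with $\ell \leq \min Z$ and $Z_0 < \dots < Z_{\ell-1}$, use the $\min Z$ pairwise $T$-apart $\omega^m$-large$(T)$ blocks witnessing $\omega^{m+1}$-largeness$(T)$ of $Z$: since at most $\ell-1$ interval boundaries can split such a block, some block lies entirely inside a single $Z_i$, making $Z_i$ an exp-sparse superset of an $\omega^m$-large$(T)$ set and hence $n$-dense$(T,\Gamma)$. For (c), apply \Cref{lem:largeness-rt1k} to the $\omega^{2m}$-large$(T)$ exp-sparse set $Z$ to obtain an $\omega^m$-large$(T)$ $f$-homogeneous subset, which is $n$-dense$(T,\Gamma)$. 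For (d), pick two $T$-apart $\omega^m$-large$(T)$ blocks $X_0 < X_1$ in $Z$; by $T$-apartness and $\min Z \leq \max X_0$, the set $Y = X_1$ (already $n$-dense$(T,\Gamma)$ by the Parsons bound) satisfies $\forall x < \min Z \exists y < \min Y \forall z < \max Y\ \theta(x,y,z)$.

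To close the induction, given any infinite set $U$, extract an infinite exp-sparse subset $U' \subseteq U$ (routine in $\RCA_0$) and then apply the hypothesis of the proposition to obtain an $\omega^{2m+1}$-large$(T,\Gamma)$ subset $Z \subseteq U'$; this $Z$ is exp-sparse and is $(n+1)$-dense$(T,\Gamma)$ by the key claim. I expect the most delicate step to be clause (b): one must carefully account for how a partition into $\ell \leq \min Z$ consecutive intervals interacts with the $\min Z$ $T$-apart blocks coming from $\omega^{m+1}$-largeness$(T)$, and then invoke closure under supersets of both $\omega^m$-largeness$(T)$ and (via the inductive hypothesis) $n$-density$(T,\Gamma)$ to promote a mere containment of a block to full $n$-density of the ambient $Z_i$.
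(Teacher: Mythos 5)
Your proof is correct and follows essentially the same strategy as the paper's: apply the generalized Parsons theorem (\Cref{thm:generalized-parsons-p4-bsig2-largeness}) to turn provable largeness into a concrete exp-sparse $\omega^m$-large$(T)$ bound, then check the clauses of \Cref{def:density} using \Cref{lem:largeness-rt1k} for (c), the $T$-apart blocks from $\omega^{m+1}$-largeness$(T)$ for (d), and the $\Gamma$-largeness for (a). The only (harmless) differences are bookkeeping: the paper applies Parsons once per level to $\omega^n$-largeness$(T,\Gamma)$ and threads an explicit sequence $k_n$ through the induction, whereas you apply it to the density predicate at each stage, and you argue clause (b) directly instead of noting (as in \Cref{rem:density}) that it follows from (c); both routes lean equally on the easy closure of these notions under supersets.
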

\begin{proof}
Write $T \equiv \forall x\exists y \forall z \theta(x,y,z)$ and $\Gamma \equiv (\forall f : [\mathbb{N}]^n \to k)(\exists Y)(Y $ is infinite $ \wedge \Psi(f,Y))$.

By \Cref{thm:generalized-parsons-p4-bsig2-largeness}, for every $n \in \omega$, since $\bbomega^n$-largeness$(T, \Gamma)$ is a largeness notion provably in~$\RCA_0 + \BSig_2 + T$, there exists some $\ell_n \in \omega$, such that $$\ISig_1 \vdash \forall X (X \mbox{ is } \bbomega^{\ell_n}\mbox{-large}(T) \mbox{ and exp-sparse} \rightarrow X \mbox{ is } \bbomega^n\mbox{-large}(T, \Gamma))$$

Consider the following inductively defined sequence: $k_0 = 1$ and $k_{n+1} = \max \{2k_n, \ell_{k_n}, k_n+1\}$. We claim that $$\ISig_1 \vdash \forall X (X \mbox{ is } \bbomega^{k_n}\mbox{-large}(T) \mbox{ and exp-sparse} \rightarrow X \mbox{ is } n\mbox{-dense}(T,\Gamma))$$

By \Cref{prop:largeness-bsig2-largeness-sparsity}, $\RCA_0 + \BSig_2 + T$ proves that exp-sparse $\bbomega^k$-largeness$(T)$ is a largeness notion for every $k \in \omega$, so if the claim is valid then $n$-density$(T,\Gamma)$ is a largeness notion provably in~$\RCA_0 + \BSig_2 + T$ for every~$n \in \omega$. \\

We prove the claim by external induction on $n \in \omega$:

Case $n = 0$, every $\bbomega^{1}\mbox{-large}(T)$ set is $0\mbox{-dense}(T,\Gamma)$

Case $n > 0$, assume the property to be true at rank $n - 1$. Let $X$ be $\bbomega^{k_n}\mbox{-large}(T)$ and exp-sparse. We need to check that $X$ is $n\mbox{-dense}(T,\Gamma)$. Since by \Cref{rem:density}, item (b) follows from (c), then we prove (a), (c) and (d) of \Cref{def:density} :
\begin{itemize}
    \item[(a)] Since $X$ is $\bbomega^{\ell_{k_{n-1}}}\mbox{-large}(T)$ and exp-sparse, $X$ is $\bbomega^{k_{n-1}}\mbox{-large}(T, \Gamma)$, so for any $f : [X]^n \to k$, there is an $(n-1)\mbox{-dense}(T, \Gamma)$ subset $Y \subseteq X$ such that $\Psi(f,Y)$ holds.
    \item[(c)] Since $X$ is $\bbomega^{2k_{n-1}}\mbox{-large}(T)$ and exp-sparse, by \Cref{lem:largeness-rt1k}, for every coloring $f : X \to \min X$, there is an $\bbomega^{k_{n-1}}\mbox{-large}(T)$ subset $Y \subseteq X$ which is $f$-homogeneous. By induction hypothesis, $Y$ is $(n-1)\mbox{-dense}(T,\Gamma)$.
    \item[(d)] Since $X$ is $\bbomega^{k_{n-1} + 1}\mbox{-large}(T)$, there exists $X_0 < \dots < X_{\min X - 1}$ $(n-1)\mbox{-dense}(T, \Gamma)$ subsets of $X$ that are pairwise $T$-apart. So $\forall x < \min X \exists y < \min X_1 \forall z < \max X_1 \theta(x,y,z)$ (since $\min X \leq \max X_0$ and $X_0$ and $X_1$ are $T$-apart).
\end{itemize}

Therefore, $X$ is indeed $n\mbox{-dense}(T,\Gamma)$. We conclude by induction.

\end{proof}

The following theorem is the one which will actually be used in our applications in~\Cref{sect:applications}.

\begin{theorem}\label[theorem]{thm:largeness-to-conservation}
Let $\Gamma$ be an $\RT$-like. If $\bbomega^n$-largeness$(T, \Gamma)$ is a largeness notion provably in~$\RCA_0 + \BSig_2 + T$ for every~$n \in \omega$ and every $\Pi^0_3$ formula~$T$, then $\WKL_0 + \BSig_2 + \Gamma$ is a $\forall\Pi_4^0$-conservative extension of $\RCA_0 + \BSig_2$.
\end{theorem}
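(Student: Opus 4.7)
The plan is to compose \Cref{prop:largeness-to-density} and \Cref{thm:ph-conservative} into a single argument. First, I would invoke \Cref{prop:largeness-to-density}: since, by hypothesis, $\omega^n$-largeness$(T,\Gamma)$ is a largeness notion provably in $\RCA_0 + \BSig_2 + T$ for every $n \in \omega$ and every $\Pi^0_3$ sentence $T$, the proposition upgrades this to: $n$-density$(T,\Gamma)$ is a largeness notion provably in $\RCA_0 + \BSig_2 + T$ for every $n \in \omega$ and every $\Pi^0_3$ sentence $T$.

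Next I would derive each Paris--Harrington-like principle $n\mbox{-}\mathsf{PH}_\theta(\Gamma)$ over $\RCA_0 + \BSig_2$. Fix $n \in \omega$ and a $\Sigma^0_0$ formula $\theta(X \uh_z, a, x, y, z)$. Arguing in $\RCA_0 + \BSig_2$, let $A$, $a$, $b$ be given and assume $T(A,a) \equiv \forall x \exists y \forall z\ \theta(A\uh_z, a, x, y, z)$. Since $T(A,a)$ is $\Pi^0_3$, the previous step, applied with $T := T(A,a)$, tells us (reasoning inside $\RCA_0 + \BSig_2 + T(A,a)$) that $n$-density$(T(A,a), \Gamma)$ is a largeness notion. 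Applied to the infinite set $(b, \infty)$, this yields an $n$-dense$(T(A,a), \Gamma)$ subset $X \subseteq (b,\infty)$, which automatically satisfies $\min X > b$. This is exactly the conclusion of $n\mbox{-}\mathsf{PH}_\theta(\Gamma)$.

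Finally, I would appeal to \Cref{thm:ph-conservative}, according to which $\WKL_0 + \BSig_2 + \Gamma$ is $\forall \Pi^0_4$ conservative over $\RCA_0 + \{n\mbox{-}\mathsf{PH}_\theta(\Gamma) : n \in \omega,\ \theta \in \Sigma^0_0\}$. Since the previous paragraph shows that $\RCA_0 + \BSig_2$ already proves every axiom of this schema, the conservation passes down to $\RCA_0 + \BSig_2$, which is what the theorem claims.

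I do not expect any real obstacle here: the statement is essentially a packaging of the two technical pillars developed in \Cref{sect:density-ramsey-like} and \Cref{sect:largeness-ramsey-like}, and all the work has gone into those. The one point that deserves a second look is that the hypothesis must be applied to the particular $\Pi^0_3$ sentence $T(A,a)$, which lives in the language enriched by the constants $A$ and $a$; this is legitimate precisely because of the convention set at the beginning of \Cref{sect:largeness-t} that the arithmetic hierarchy is extended to allow these constants.
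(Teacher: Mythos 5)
Your proposal is correct and follows essentially the same route as the paper's proof: apply \Cref{prop:largeness-to-density} to the $\Pi^0_3$ sentence $T(A,a)$ to get that $n$-density$(T(A,a),\Gamma)$ is a largeness notion, instantiate it on $(b,\infty)$ to obtain $n\mbox{-}\mathsf{PH}_\theta(\Gamma)$ over $\RCA_0+\BSig_2$, and then invoke \Cref{thm:ph-conservative}. The only cosmetic difference is that the paper phrases the middle step model-theoretically (fixing an arbitrary model of $\RCA_0+\BSig_2$) whereas you argue directly inside the theory; the content is identical.
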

\begin{proof}
Fix some~$n \in \omega$ and $\theta(\Param\uh_z, \param, x, y, z)$ a $\Sigma^0_0$ formula.
Let us show that $\RCA_0 + \BSig_2 \vdash n\mbox{-}\mathsf{PH}_\theta(\Gamma)$.
Fix a model~$\M = (M, S) \models \RCA_0 + \BSig_2$, some~$\param, b \in M$ and some set~$\Param \in S$ such that $\M \models \forall x \exists y \forall z \theta(\Param\uh_z, \param, x, y, z)$ holds. Let~$T(\Param,\param) \equiv \forall x \exists y \forall z \theta(\Param\uh_z, \param, x, y, z)$. By \Cref{prop:largeness-to-density}, $n$-density$(T(\Param,\param),\Gamma)$ is a largeness notion provably in~$\RCA_0 + \BSig_2 + T(\Param,\param)$ for every~$n \in \omega$, thus $n$-density$(T(\Param,\param),\Gamma)$ is a largeness notion in~$\M$. Since~$(b, \infty)$ is an $M$-infinite set, then there is an $n$-dense$(T(\Param,\param), \Gamma)$ set~$X \in S$ such that $\min X > b$. So $\M \models n\mbox{-}\mathsf{PH}_\theta(\Gamma)$. Thus, by \Cref{thm:ph-conservative}, $\WKL_0 + \Gamma$ is a $\forall\Pi_4^0$-conservative extension of $\RCA_0 + \BSig_2$.
\end{proof}







\section{$\Pi^1_1$ conservation of the grouping principle}\label[section]{sect:grouping-principle}

The grouping principle was defined by Patey and Yokoyama~\cite{patey2018proof}
as a combinatorial principle needed for an inductive construction of $\bbomega^n$-large solutions to the Erdos-Moser theorem. They proved that the grouping principle is $\Pi^0_3$ conservative over~$\RCA_0$ and that it implies $\BSig_2$. In this section, we prove that this principle is actually $\Pi^1_1$ conservative over~$\RCA_0 + \BSig_2$.

There exist two notions of grouping: a finitary one ($\FGP^n_k$) and an infinitary one ($\GP^n_k$), the former following from the latter. We first define the infinitary version.

\begin{definition}[$\L$-grouping]
Let $\L$ be a largeness notion and $f : [\mathbb{N}]^n \to k$ a coloring.
A (finite or infinite) sequence of $\L$-large sets $F_0 < F_1 < \dots$ of length $k \in \NN \cup \{\NN\}$ is an $\L$-grouping for~$f$ if for every~$H \in [k]^n$, $f$ is monochromatic on $\prod_{i \in H} F_i$.
\end{definition}

In particular, for~$n = 1$, $F_0 < F_1 < \dots$ is an $\L$-grouping if for every~$i$, $F_i$ is $f$-homogeneous. 

For $\vec{f} = f_0, \dots, f_{k-1}$ a finite sequence of colorings of the same arity, we say that $F_0 < F_1 < \dots$ is an $\L$-grouping for $\vec{f}$ if $F_0 < F_1 < \dots$ is an $\L$-grouping for every $f_i$ (or, equivalently, if it is an $\L$-grouping for the product coloring $f : x \mapsto (f_0(x), \dots, f_{k-1}(x)$).

\begin{statement}[Grouping principle]
$\GP^n_k$: For every largeness notion~$\L$ and every coloring $f : [\NN]^n \to k$, there is an infinite $\L$-grouping.
\end{statement}

The remainder of this section is dedicated to the proof of the following theorem.

\begin{theorem}\label[theorem]{th:gp22-wkl-pi11-conservation}
$\WKL_0 + \GP_2^2$ is $\Pi_1^1$-conservative over $\RCA_0 + \BSig_2$.     
\end{theorem}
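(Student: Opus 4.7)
The plan is to construct, from any countable $\M = (M, S) \models \RCA_0 + \BSig_2$, a $\Pi^1_1$-elementary countable extension $\M^* \supseteq \M$ which models $\WKL_0 + \GP_2^2$. This suffices, since given any $\Pi^1_1$-sentence $\varphi$ with $\M \not\models \varphi$, the extension $\M^*$ will witness $\WKL_0 + \GP_2^2 + \RCA_0 + \BSig_2 \not\vdash \varphi$. I would build $\M^*$ as the direct limit of an $\omega$-chain $\M = \M_0 \preceq \M_1 \preceq \cdots$ of countable $\RCA_0 + \BSig_2$ models, each a $\Pi^1_1$-elementary extension of the previous, where by a standard bookkeeping each $\M_{i+1}$ resolves either an enumerated infinite binary tree from $\M_i$ (a $\WKL$-step) or an enumerated pair $(f,\L)$ of a coloring $f : [\NN]^2 \to 2$ and a largeness notion $\L$ from $\M_i$ (a $\GP^2_2$-step) by adding an infinite $\L$-grouping for $f$.

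The $\WKL$-step is Hájek's theorem, which even provides $\Pi^1_1$-elementarity. For the $\GP^2_2$-step, given $(f, \L)$ in $\M_i$, I would adapt the Mathias-style forcing from Patey--Yokoyama. A condition is a pair $(\sigma, R)$ where $\sigma = F_0 < \cdots < F_k$ is a partial $\L$-grouping (each $F_j$ is $\L$-large and $f$ is constant on $F_i \times F_j$ for every $i < j \leq k$) and $R \in S_i$ is an infinite reservoir above $\max F_k$ on which $f(a, \cdot)$ is constant for every $a \in \bigcup_{j \leq k} F_j$. The density lemma --- every condition is extendible by one more block --- is proved by extracting an $\L$-large $F_{k+1} \subseteq R$ (which exists since $\L$ is a largeness notion in $\M_i$ and $R$ is infinite) and then using $\BSig_2$ to thin $R$ above $\max F_{k+1}$ to a smaller infinite $R'$ on which $f(a, \cdot)$ is constant for each of the finitely many new $a \in F_{k+1}$. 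A sufficiently generic filter thus yields an infinite $\L$-grouping $H = \bigcup_i F_i$.

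The principal obstacle is verifying that the extension $\M_i[H]$ is $\Pi^1_1$-elementary over $\M_i$. I would handle this by a preservation-of-$\Sigma^1_1$-avoidance argument for the above forcing, closely modelled on the Chong--Slaman--Yang proof of \Cref{thm:ads-pi11-conservation} for $\ADS$. Concretely, if $\theta(H) \equiv \exists Y\, \theta_0(H, Y)$ is $\Sigma^1_1$ with $\theta_0$ arithmetic in parameters from $S_i$, and some condition $(\sigma, R)$ forces $\theta(H)$, then a $\BSig_2$-reflection on the density of forcing-conditions inside $\M_i$ would produce an $S_i$-witness $Y$ together with a finite extension of $\sigma$ drawn from $R$ realising $\theta_0$, contradicting the assumed absence of such witnesses in $\M_i$. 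This is exactly where $\BSig_2$ is decisive --- it drives both the density lemma and the reflection step --- and it is what upgrades the Patey--Yokoyama $\Pi^0_3$-bound over $\RCA_0$ to a $\Pi^1_1$-bound over $\RCA_0 + \BSig_2$. Iterating, the direct limit $\M^*$ models $\WKL_0 + \GP_2^2$ and is $\Pi^1_1$-elementary over $\M$, completing the argument.
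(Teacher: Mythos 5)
Your high-level architecture is the right one and matches the paper's: iterate countable $\omega$-extensions of a model of $\RCA_0 + \BSig_2$, handling one $\WKL$- or $\GP^2_2$-requirement at a time, and take the union. (One simplification you could make: since all extensions share the same first-order part, arithmetic formulas with parameters from $\M$ are automatically absolute, so the "$\Pi^1_1$-elementarity" you worry about is free; the only thing to verify is that each extension still models $\RCA_0 + \BSig_2$.) But that last point is precisely where your proposal has a genuine gap. A "sufficiently generic" filter for your Mathias-style forcing gives you an infinite $\L$-grouping $H$, but nothing in your argument ensures $\M_i[H] \models \BSig_2$ (or even $\ISig_1$): adding a generic set to a nonstandard model can destroy collection, and your appeal to "a $\BSig_2$-reflection on the density of forcing-conditions" is not a mechanism for preventing this — preservation of $\Sigma^1_1$ facts is not the issue, preservation of induction and collection with the new set as a parameter is. The paper's Proposition \ref{prop:gp22-low} handles this by explicit first-jump control: the forcing conditions are pairs $(\vec F, \vec g)$ where $\vec g$ is a tuple of auxiliary colorings and extensions must be groupings for all of $\vec g$, and \Cref{lem:SGP22-forcing-jump} forces $(G \oplus Y)'\uh_k$ outright, yielding $Y' \geq_T (G \oplus Y)'$ and hence $\M[G] \models \RCA_0 + \BSig_2$. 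Moreover, the construction of the generic sequence is itself carried out internally by a $\Delta^0_1(Y'\oplus P)$ recursion justified by $\IDel_1$, another step your sketch omits.

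Two further points. First, the paper does not force directly against the $2$-coloring $f$: it first splits $\GP^2_2$ into $\COH + \Delta^0_2\mbox{-}\GP^1_2$ (\Cref{lem:gp-decomposition}), quotes the Chong--Slaman--Yang conservation theorem to handle $\COH$, and only forces groupings for $\Delta^0_2$ instances of $1$-colorings; this reduction is what makes the jump control tractable. Your reservoir trick is essentially doing $\COH$ on the fly, but note that stabilizing $f(a,\cdot)$ on $R$ for each individual $a$ does not make $f$ monochromatic on $F_i \times F_{k+1}$ — you would additionally need all elements of each block $F_i$ to share the same limit color, which your extension step does not arrange. Second, since "$\L$ is a largeness notion" is itself a $\Pi^1_1$ hypothesis that can be destroyed by later extensions, the construction must allow the outcome "$G$ witnesses that $\L$ is not a largeness notion" (as in item (1) of the paper's final chain construction); your proposal silently assumes largeness persists.
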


Following the reverse mathematical practice, $\GP^2_2$ can be decomposed into the cohesiveness principle ($\COH$) and a $\Delta^0_2$ version of $\GP^1_2$ ($\Delta^0_2\mbox{-}\GP^1_2$).

\begin{statement}
$\Delta^0_2\mbox{-}\GP^1_2$: For every largeness notion~$\L$ and every $\Delta^0_2$ set~$A$, there is an infinite $\L$-grouping for~$A$.
\end{statement}

An infinite set~$C \subseteq \NN$ is \emph{cohesive} for an infinite sequence of sets~$R_0, R_1, \dots$ if for every~$x \in \NN$, $C \subseteq^* R_x$ or $C \subseteq^* \overline{R}_x$, where $\subseteq^*$ denotes inclusion up to finitely many elements.

\begin{statement}[Cohesiveness]
$\COH$: Every infinite sequence of sets admits an infinite cohesive set.
\end{statement}

The following decomposition can be considered as folkore:

\begin{lemma}[Folklore]\label[lemma]{lem:gp-decomposition}
$\RCA_0 + \BSig_2 \vdash \COH + \Delta^0_2\mbox{-}\GP^1_2 \rightarrow \GP^2_2$.    
\end{lemma}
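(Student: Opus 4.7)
The plan is to reduce $\GP^2_2$ to $\Delta^0_2\mbox{-}\GP^1_2$ along a cohesive set. First, $\COH$ converts $f$ into a one-dimensional $\Delta^0_2$ ``limit'' coloring $\tilde f$ defined on a cohesive set $C$; then $\Delta^0_2\mbox{-}\GP^1_2$, transferred so that the grouping lives inside $C$, produces an $\L$-large $\tilde f$-homogeneous sequence; and finally $\BSig_2$ is used to thin this sequence so that $\tilde f$-homogeneity yields the genuine two-dimensional grouping property for~$f$.

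More precisely, given $f : [\NN]^2 \to 2$ and a largeness notion $\L$, I would set $R_x = \{y > x : f(x,y) = 1\}$ and apply $\COH$ to the sequence $(R_x)_x$ to obtain an infinite cohesive set~$C$. Cohesiveness gives, for each $x$, a unique color $\tilde f(x) \in \{0,1\}$ such that $f(x, y) = \tilde f(x)$ for cofinitely many $y \in C$, and the resulting function $\tilde f$ is $\Delta^0_2(C \oplus f)$. To keep the final grouping inside $C$, I would pull the problem back along the $\Delta^0_1(C)$ increasing enumeration $\pi : \NN \to C$: work with the coloring $\tilde g(n) = \tilde f(\pi(n))$ and the largeness notion $\L^C = \{F \finsub \NN : \pi(F) \in \L\}$. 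One checks easily that $\L^C$ is a largeness notion, as closure under supersets is inherited from $\L$, and any infinite $X$ yields an infinite $\pi(X) \subseteq C$, which by $\L$-largeness contains an $\L$-large subset $G$ whose $\pi$-preimage is $\L^C$-large in~$X$. Applying $\Delta^0_2\mbox{-}\GP^1_2$ (relative to $C \oplus f$) to $\tilde g$ and $\L^C$ yields an infinite sequence $G_0 < G_1 < \dots$ of $\L^C$-large $\tilde g$-homogeneous sets; setting $F_i = \pi(G_i)$ gives a sequence of subsets of $C$, each $\L$-large and $\tilde f$-homogeneous with some constant color $c_i$.

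To finish, I would thin $(F_i)_i$ to convert $\tilde f$-homogeneity into the two-dimensional grouping property for~$f$. For $x \in C$, let $n_x$ be the least threshold with $f(x, y) = \tilde f(x)$ for all $y \in C$ with $y > n_x$; by cohesiveness this threshold exists, and ``$n_x \leq n$'' is equivalent to a $\Pi^0_1(C \oplus f \oplus \tilde f)$ statement, so ``$\exists n \; n_x \leq n$'' is $\Sigma^0_2(C \oplus f)$. Since each $F_i$ is finite, $\BSig_2$ then provides a uniform bound $N_i$ with $n_x < N_i$ for all $x \in F_i$. Choosing $i_0 = 0$ and, recursively, $i_{k+1}$ minimal with $\min F_{i_{k+1}} > N_{i_k}$, the subsequence $(F_{i_k})_k$ is an infinite $\L$-grouping for~$f$: for $k < \ell$, $x \in F_{i_k}$ and $y \in F_{i_\ell}$, one has $y \in C$ and $y > N_{i_k} \geq n_x$, hence $f(x, y) = \tilde f(x) = c_{i_k}$, so $f$ is constant on $F_{i_k} \times F_{i_\ell}$.

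The key technical point is to ensure that the grouping supplied by $\Delta^0_2\mbox{-}\GP^1_2$ actually lies inside~$C$: a naive extension of $\tilde f$ to all of $\NN$ could yield grouping sets entirely outside~$C$, where $\tilde f$ has been defined arbitrarily and carries no information about~$f$. Transferring the whole problem along the enumeration $\pi$ and using the largeness notion $\L^C$ in place of $\L$ resolves this; once the $F_i$'s are known to sit in~$C$, the remaining argument is a routine application of $\BSig_2$ to uniformly bound the cohesive thresholds $n_x$ over each finite set $F_i$.
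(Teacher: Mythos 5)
Your proof is correct and follows essentially the same route as the paper: use $\COH$ to extract the limit coloring along a cohesive set, apply $\Delta^0_2\mbox{-}\GP^1_2$ to it, and thin the resulting grouping using $\BSig_2$ to bound the cohesiveness thresholds over each finite block. Your explicit transfer of the largeness notion along the enumeration of $C$ (via $\L^C$) is a detail the paper's one-line "computably thinning out" glosses over, and it is handled correctly.
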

\begin{proof}
Fix some coloring $f : [\NN]^2 \to 2$ and largeness notion~$\L$. By $\COH$,
letting $R_x = \{ y : f(x, y) = 1 \}$, there is an infinite set~$C$ which is cohesive for~$R_0, R_1, \dots$ In particular, $\lim_{y \in C} f(x, y)$ exists for every~$x \in M$. Say~$C = \{ c_0 < c_1 < \dots \}$. Let~$A(x) = \lim_y f(c_x, c_y)$. By $\Delta^0_2\mbox{-}\GP^1_2$, there is an infinite $\L$-grouping~$F_0 < F_1 <  \dots$ for~$A$. For every~$s \in M$, let~$G_s = \{ c_x : x \in F_s \}$. Then for every~$s \in M$, either $\forall x \in F_s \lim_{y \in C} f(x, y) = 0$, or  $\forall x \in F_s \lim_{y \in C} f(x, y) = 1$. Then, using $\BSig_2$, by computably thinning out the sequence, we obtain an $M$-infinite grouping for~$f$.
\end{proof}

Given a model $\M = (M, S) \models \RCA_0$ and a set~$G \subseteq M$, we write $\M[G]$ for the model whose first-order part is~$M$ and whose second-order part consists of the $\Delta^0_1$-definable sets with parameters in $\M \cup \{G\}$. If $\M \cup \{G\} \models \ISig_1$, then $\M[G] \models \ISig_1$.
By the amalgamation theorem of Yokoyama~\cite[Theorem 2.2]{yokoyama2009conservativity}
and the fact that $\WKL_0$ and $\COH$ are both $\Pi^1_1$ conservative over~$\RCA_0 + \BSig_2$, \Cref{th:gp22-wkl-pi11-conservation} is a direct consequence of the following proposition. 

\begin{proposition}\label[proposition]{prop:gp22-low}
Consider a countable model $\M = (M,S) \models \RCA_0 + \BSig_2$ topped by a set $Y \in S$. For every largeness notion $\L \in S$ and every $\Delta_2^0(Y)$-set $A \subseteq M$, there exists a $G \subseteq M$ such that:
\begin{itemize}
    \item $G$ is an $M$-infinite $\L$-grouping for $A$ or $G$ is a witness that $\L$ is not a largeness notion (i.e. an $M$-unbounded set with no $M$-finite subset in $\L$).
    \item $Y' \geq (Y \oplus G)'$
    \item $\M[G] \models \RCA_0 + \BSig_2$. 
\end{itemize}
\end{proposition}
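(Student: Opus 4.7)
The plan is to run a Mathias-style forcing with controlled jump, adapting the standard low-basis technique to $\Delta^0_2(Y)$ reservoirs. A condition is a pair $c = (\vec{F}, X)$ where $\vec{F} = (F_0, \dots, F_k)$ is an $M$-finite sequence of pairwise disjoint $\L$-large $A$-homogeneous sets with $F_0 < \dots < F_k$, and $X$ is an $M$-infinite reservoir presented by a $\Pi^0_2(Y)$-index with $\max F_k < \min X$. An extension $c' = (\vec{F}', X')$ of $c$ refines the reservoir and possibly appends further $\L$-large $A$-homogeneous blocks drawn from $X$. I then build an external $\omega$-sequence of conditions $c_0 \geq c_1 \geq \dots$ using $Y'$ as oracle, and set $G$ either to the union of all blocks enumerated along the way or to the generic of a secondary sub-forcing.

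Two kinds of requirements are addressed along the sequence. At \emph{jump-control} stages, for each $e$, we ask $Y'$ whether the current condition admits an extension forcing $\Phi_e^{Y \oplus G}(e){\downarrow}$; we take such an extension if one exists, otherwise we do nothing. By the standard trick this yields $(Y \oplus G)' \leq_T Y'$. At \emph{extension} stages, we ask $Y'$ whether the current reservoir $X_s$ contains an $\L$-large $A$-homogeneous subset. If the answer is always yes, we extend indefinitely, and the resulting $G := \bigcup_s \vec{F}_s$ is an $M$-infinite $\L$-grouping for $A$. Otherwise, at the first failure stage $s$, the reservoir $X_s$ is $M$-infinite but contains no $\L$-large $A$-homogeneous subset; by $\BSig_2$, one of $X_s \cap A^{-1}(0)$ or $X_s \cap A^{-1}(1)$ is $M$-unbounded, and we run a secondary low-basis-style forcing in parallel for the two possible colors, keeping whichever branch yields an $M$-unbounded generic, to extract an $M$-unbounded $A$-homogeneous set $G$ that remains low over $Y$. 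Any $M$-finite subset of $G$ is then $A$-homogeneous and contained in $X_s$, so by the failure assumption it is not $\L$-large, and $G$ witnesses that $\L$ is not a largeness notion in $\M[G]$.

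In both branches $(Y \oplus G)' \leq_T Y'$, so every $\Sigma^0_2(Y \oplus G)$-definable set is $\Sigma^0_1((Y \oplus G)')$-definable, hence $\Sigma^0_2(Y)$-definable; combined with $Y \in S$ and $\M \models \BSig_2$, this yields $\M[G] \models \BSig_2$. That $\M[G] \models \RCA_0$ is automatic from closure under $\Delta^0_1(\M \cup \{G\})$. The main technical hurdle is verifying that the two $Y'$-decisions above---the $\Sigma^0_1(Y \oplus G)$ forcing relation and the existence of an $\L$-large $A$-homogeneous subset in the reservoir---are both uniformly $\Sigma^0_2(Y)$, and that the class of $\Pi^0_2(Y)$-presented reservoirs is closed under the refinements used at each stage; this is essentially a $\BSig_2$ variant of the standard Cholak--Jockusch--Slaman low-basis bookkeeping.
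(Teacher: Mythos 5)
Your architecture is the classical Cholak--Jockusch--Slaman Mathias forcing with first-jump control, but as set up the jump-control step does not go through, and this is the crux of the whole proposition rather than a deferrable \qt{technical hurdle}. Since your extensions consist of $\L$-large \emph{$A$-homogeneous} blocks and $A$ is only $\Delta^0_2(Y)$, the question \qt{does the current condition admit an extension forcing $\Phi_e^{Y\oplus G}(e){\downarrow}$} is an existential quantifier over finite tuples of blocks whose defining property conjoins a $\Delta^0_2(Y)$ clause (homogeneity) with membership in a $\Pi^0_2(Y)$ reservoir: this is $\Sigma^0_3(Y)$, and even with a $Y$-computable reservoir it is $\Sigma^0_2(Y)$, hence decidable by $Y''$ but not by $Y'$. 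Answering it with $Y''$ is exactly what yields the low${}_2$ solutions of Cholak--Jockusch--Slaman; the proposition needs lowness. That the answer is nonetheless $Y'$-accessible for \emph{groupings} is precisely the content of \Cref{lem:SGP22-forcing-jump}, which asks a genuinely different question: conditions carry a tuple of colorings $\vec g$ drawn from a single low-over-$Y$ $\omega$-extension $\mathcal{N}\models\WKL_0+\BSig_2$ (built first from an iterated $\Pi^0_1(Y)$ class with a path $P$ satisfying $Y'\geq_T(Y\oplus P)'$); the forcing question quantifies \emph{universally over all candidate coloring tuples}, so that $A$ never occurs in it and it is $\Sigma^0_1(Y\oplus\vec g)$ by compactness; one then locates a maximal number $s$ of computations that can always be converged, extracts a \qt{bad} tuple $\vec f$ from a $\Pi^0_1(Y\oplus\vec g)$ class inside $\mathcal{N}$ to force the remaining computations to diverge, and only at the end instantiates one coordinate at $A$ to recover $A$-homogeneity of the new blocks. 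Your proposal contains no analogue of this idea. Worse, your failure branch is impossible as stated: by Downey--Hirschfeldt--Lempp--Solomon there is a $\Delta^0_2$ set such that neither it nor its complement has an infinite low subset, so you cannot in general extract a low-over-$Y$, $M$-unbounded, $A$-homogeneous subset of the stuck reservoir. The paper avoids this by disposing of the non-largeness case up front inside $\mathcal{N}$ (any witness there is already low over $Y$) and by showing, via \Cref{lem:SGP22-forcing-infinite}, that when $\L$ is a largeness notion in $\mathcal{N}$ the relevant $\Pi^0_1$ class of obstructing colorings is empty, so the forcing never gets stuck.

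A second, independent gap is the external $\omega$-length construction. The model $\M$ may be nonstandard, so an external sequence of conditions only decides $\Phi_e^{Y\oplus G}(e)$ for standard $e$; this does not produce an internal reduction $(Y\oplus G)'\leq_T Y'$ in $\M$, and without that you cannot conclude $\M[G]\models\ISig_1+\BSig_2$ (nor, without extra care, that $G$ is $M$-unbounded rather than merely externally infinite). The paper's construction is an internal recursion of length $M$, definable by a $\Delta^0_1(Y'\oplus P)$ formula, continued through all of $M$ by $\IDel_1(Y'\oplus P)$, which follows from $\BSig_2$ in $\M[P]$; some such internalization is unavoidable.
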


\begin{proof}

Fix a uniform enumeration of all $Y$-primitive recursive non-empty tree functionals $T_0, T_1, \dots$, 
that is, for every~$n \in M$ and every~$X$, $T_n^X$ is an $\M$-infinite tree. Let
$$
\C = \{ \bigoplus_a X_a : \forall a \forall n\ X_{\langle a, n\rangle} \in [T_n^{Y \oplus X_0 \oplus \dots \oplus X_a}] \}
$$
The class $\C$ is $\Pi^0_1(Y)$ and non-empty. There exists a primitive $Y$-recursive tree whose infinite path are the elements of $\C$, so by \cite[Corollary I.3.10(3)]{hajek1998metamathematics}, there is a set $Q = \bigoplus_a X_a \in \C$ such that $Y' \geq_T (Q \oplus Y)'$ and $\M[Q] \models \RCA_0 + \BSig_2$. Let~$\mathcal{N} = (M, \{ X_a : a \in M \})$. Note that $\mathcal{N}$ is an $\omega$-extension of~$\M$ and that $\mathcal{N} \models \WKL_0 + \BSig_2$.
Assume $\L$ is a largeness notion within~$\mathcal{N}$, otherwise we are done.

\begin{definition}[Condition]
A \emph{condition} is a tuple $(\vec{F}, \vec{g})$ such that
$\vec{F}$ is a finite $\L$-grouping for~$A$, and $\langle g_0, \dots, g_{p-1}\rangle \in \mathcal{N}$ are  2-colorings of~$M$.
\end{definition}

Note that every set~$X \in \mathcal{N}$ has a code with respect to~$Q$, that is, some~$a \in M$ such that $X = X_a$ (where $\bigoplus X_a = Q$). Thus, since $\langle g_0, \dots, g_{p-1}\rangle \in \mathcal{N}$, a condition $(\vec{F}, \vec{g})$ can be represented by an integer in~$M$. For simplicity of notation, we identify a condition with its code.

\begin{definition}[Extension]
A condition $d = (\vec{E}, \vec{h})$ \emph{extends} a condition $c = (\vec{F}, \vec{g})$ (written $d\leq c$) if:
\begin{itemize}
    \item $\vec{g}$ is a subsequence of $\vec{h}$
    \item $\vec{E}$ is of the form $\vec{F}, H_0, \dots, H_{s-1}$
    \item $H_0, \dots, H_{s-1}$ is an $\L$-grouping for $\vec{g}$.
\end{itemize}
\end{definition}

\begin{definition}[Forcing relation]
Let $\Phi_e$ be a Turing functional and $c = (\vec{F}, \vec{g})$ a condition:
\begin{itemize}
    \item $c \Vdash \Phi_e^{Y \oplus G} \downarrow$ if $\Phi_e^{Y \oplus \vec{F}} \downarrow$
    \item $c \Vdash \Phi_e^{Y \oplus G} \uparrow$ if $\Phi_e^{Y \oplus (\vec{F},\vec{E})} \uparrow$ for every finite $\L$-grouping~$\vec{E}$ for $\vec{g}$ such that $\max \vec{F} < \min \vec{E}$.
\end{itemize}
The relation $c \Vdash \Phi_e^{Y \oplus G} \downarrow$ is $\Delta_0^0(Y)$ and $c \Vdash \Phi_e^{Y \oplus G} \uparrow$ is $\Pi^0_1(Y \oplus \vec{g})$.
\end{definition}

\begin{lemma}\label[lemma]{lem:SGP22-forcing-jump}
Let $c = (\vec{F}, \vec{g})$ be a condition. For every~$k \in M$, there exists some $\M$-finite $\sigma' \in 2^k$ and an extension $d = (\vec{E}, \vec{h}) \leq c$ such that $d \Vdash (G \oplus Y)' \uh_k = \sigma'$ (Where $d \Vdash (G \oplus Y)' \uh_k = \sigma'$ is the $\Pi_1^0(Y \oplus \vec{h})$  relation meaning that if $\sigma'(e) = 0$ then $d \Vdash \Phi_e^{G \oplus Y} \uparrow$ and if $\sigma'(e) = 1$ then $d \Vdash \Phi_e^{G \oplus Y} \downarrow$).
\end{lemma}

\begin{proof}
For $s < k$, let $\varphi(s)$ be the formula that holds if for every $(k-s)$-tuple of colorings $\vec{f} = f_0, \dots, f_{k-s-1}$ there exists a set of indexes $e_0 < \dots < e_{s-1} < k$ and a finite $\L$-grouping $\vec{H}$ for $(\vec{g},\vec{f})$ such that $\max \vec{F} < \min \vec{H}$ and $\Phi_{e_i}^{Y \oplus (\vec{F}, \vec{H})} \downarrow$ for all $i < s$. By \Cref{lem:compactness-big2-wkl}, $\varphi(s)$ is $\Sigma_1^0(Y \oplus \vec{g})$.

Let $V = \{s < k : \varphi(s)\}$. $V$ is $\M$-finite since $\mathcal{N} \models \ISig_1$ and $V$ contains $0$. Therefore, one of the following cases holds: \\

Case 1: $k-1 \in V$. In this case, by considering the coloring induced by $A$, by \Cref{lem:compactness-big2-wkl}, there exists a finite $\L$-grouping $\vec{H}$ for $(\vec{g},A)$ with $\max \vec{F} < \min \vec{H}$ and such that for every index $e < k$, $\Phi_e^{Y \oplus (\vec{F}, \vec{H})} \downarrow$. In this case, $((\vec{F},\vec{H}), \vec{g}) \Vdash (G \oplus Y)' \uh k = 11\dots11$. \\

Case 2: there is some $s < k -1$ with $s \in V$ but $s+1 \notin V$. Consider the following $\Pi^0_1(Y \oplus \vec{g})$ class: $\C$ is the class of all $(k-s-1)$-tuple of colorings $\vec{f} = f_0, \dots, f_{k-s-2}$ such that for every set of indexes $e_0 < \dots < e_{s-2} < k$ and every finite $\L$-grouping $\vec{H}$ for $(\vec{g},\vec{f})$ satisfying $\max \vec{F} < \min \vec{H}$, there exists a $i < s - 1$ such that $\Phi_{e_i}^{Y \oplus (\vec{F}, \vec{H})} \uparrow$. 
Since $s+1 \notin V$, the class $\C$ is non-empty. Moreover, since~$Y, \vec{g} \in \mathcal{N} \models \WKL_0$, there is some~$\vec{f} \in \C \cap \mathcal{N}$.

Then $\vec{f},A$ is a $(k-s)$-uple of colorings and since $s \in V$, there exists a set of indexes $e_0 < \dots < e_{s-1} < k$ and a finite $\L$-grouping $\vec{H}$ for $(\vec{g},\vec{f},A)$ such that $\max \vec{F} < \min \vec{H}$ and $\Phi_{e_i}^{Y \oplus (\vec{F}, \vec{H})} \downarrow$ for all $i < s$.

Letting $d = ((\vec{F}, \vec{H}), (\vec{g},\vec{f}))$ and $\sigma' \in 2^k$ the binary encoding of the set $\{e_0, \dots, e_{s-1}\}$, we have that $d$ extends $c$ and $d \Vdash (G \oplus Y)' \uh k = \sigma'$: By choice of $\vec{H}$ and the $e_i$, it is clear that $d \Vdash \Phi_e^{G \oplus Y} \downarrow$ for every $e < k$ such that $\sigma'(e) = 1$ and by choice of $\vec{f}$, for every $e < k$ such that $\sigma'(e) = 0$ since $\{e_0, \dots, e_{s-1}, e\}$ is of cardinality $s + 1$, no extension of $d$ can force $\Phi_e^{G \oplus Y}$ to halt (and therefore $d \Vdash \Phi_e^{G \oplus Y} \uparrow$).
    
\end{proof}

\begin{lemma}\label[lemma]{lem:SGP22-forcing-infinite}
    Let $c = (\vec{F}, \vec{g})$ be a condition. For every $k \in M$, there exists an extension $d = (\vec{E}, \vec{g}) \leq c$ such that $\max \vec{E} > k$.
\end{lemma}

\begin{proof}
Consider the $\Pi_1^0(Y \oplus \vec{g})$-class $\mathcal{D}$ of all colorings $h$ such that there is no $\L$-grouping $\vec{E}$ with $\max \vec{E} > k$ for $\vec{g},h$ with $\vec{E}$ is of the form $\vec{F}, H_0, \dots, H_{s-1}$.

If this class was non-empty, then since~$Y, \vec{g} \in \mathcal{N} \models \WKL_0$, then there is an element $f \in \mathcal{D} \cap \mathcal{N}$. By $\RT^1$, there exists an $\mathcal{N}$-infinite set $X$ homogenous for $\vec{g},f$ such that $X \cap [k,t]_{\mathbb{N}}$ is not $\L$-large for any $t > k$ contradicting our assumption on the largeness of $\L$ within~$\mathcal{N}$. 

So the class $\mathcal{D}$ is empty and in particular does not contain the coloring $A$. Therefore, such an extension $d$ exists.
\end{proof}

\textbf{Construction}.
We will build a decreasing sequence $(\vec{F}_s, \vec{g}_s)$ of conditions and then take for $G$ the union of the $\vec{F}_s$. We will also build an increasing sequence $(\sigma'_s)$ such that $(G \oplus Y)'$ will be the union of the $\sigma'_s$. Initially, we take $\vec{F}_0 = \vec{g}'_0 = \emptyset$ and $\sigma'_0 = \epsilon$, and during the construction, we will ensure that we have $|\vec{F}_s|, |\vec{g}_s|, |\sigma'_s| \leq s$ at every stage. Each stage will be either of type $\R$ or of type $\S$. The stage $0$ is of type $\R$.

Assume that $(\vec{F}_s, \vec{g}_s)$ and $\sigma'_{s}$ are already defined. Let~$s_0 < s$ be the latest stage at which we switched the stage type. We have three cases.

Case 1: $s$ is of type~$\R$. If there exists some~$\vec{F}$ and $\vec{g}$ such that $|\vec{g}|, |\vec{F}| \leq s$ and some~$\sigma' \in 2^{s_0}$ such that $(\vec{F}, \vec{g}) \leq (\vec{F}_s, \vec{g}_s)$, and 
$(\vec{F}, \vec{g}) \Vdash (G \oplus Y)' \uh_{s_0} = \sigma'$, then let~$\vec{F}_{s+1} = \vec{F}$, $\vec{g}_{s+1} = \vec{g}$, $\sigma'_{s+1} = \sigma'$ and let~$s+1$ be of type~$\S$. Otherwise, the elements are left unchanged and we go to the next stage.

Case 2: $s$ is of type~$\S$. If there exists some~$\vec{F}$ and $\vec{g}$ such that $|\vec{g}|, |\vec{F}| \leq s$, $\max \vec{F} > s_0$ and $(\vec{F}, \vec{g}) \leq (\vec{F}_s, \vec{g}_s)$, then let $\vec{F}_{s+1} = \vec{F}$ and $\vec{g}_{s+1} = \vec{g}$ and let~$s+1$ be of type~$\R$. Otherwise, the elements are left unchanged and we go to the next stage.

This completes the construction.
\bigskip

\textbf{Verification}.
Since the size of $\vec{F}_s, \vec{g}_s$ and $\sigma'_s$ are bounded by $s$, there is a $\Delta_1^0(Y' \oplus Q)$-formula $\phi(s)$ stating that the construction can be pursued up to stage $s$. Our construction implies that the set $\{s|\phi(s)\}$ is a cut, so by $\IDel_1(Y' \oplus Q)$ (which follows from $\BSig_2$ in $\M[Q]$), the construction can be pursued at every stage. 

Let~$G = \bigcup_{s \in M} \sigma_s$.
By \Cref{lem:SGP22-forcing-jump} and \Cref{lem:SGP22-forcing-infinite},
each type of stage changes $\M$-infinitely often. Thus, $\{|\sigma'_s| : s \in M \}$ is $\M$-unbounded, so $(Y \oplus Q)' \geq_T (G \oplus Y)'$. Since~$Y' \geq_T (Y \oplus Q')$, then $Y' \geq_T(G \oplus Y)'$. In particular, $\M[G] \models \RCA_0 + \BSig_2$ and $G$ is $M$-infinite and is therefore an $M$-infinite $L$-grouping for $A$.
\end{proof}

We are now ready to prove the main theorem of this section, namely, that the grouping principle is $\Pi^1_1$ conservative over~$\RCA_0 + \BSig_2$. It can be alternatively proved by showing that $\Delta^0_2\mbox{-}\GP^1_2$ and $\COH$ both are $\Pi^1_1$-conservative over~$\RCA_0 + \BSig_2$, and using the amalgamation theorem of Yokoyama~\cite[Theorem 2.2]{yokoyama2009conservativity}. However, we give a direct argument for the sake of simplicity:

\begin{proof}[Proof of \Cref{th:gp22-wkl-pi11-conservation}]
Assume $\RCA_0 + \BSig_2 \not \vdash \forall X \phi(X)$ for $\phi$ arithmetical. Then, by completeness and the Löwenheim-Skolem theorem, there exists a model $\M = (M,S) \models \RCA_0 + \BSig_2 + \neg\phi(A)$ for an $A \in S$. We can furthermore assume that $\M$ is topped by $A$.

By \Cref{prop:gp22-low} and Chong, Slaman and Yang~\cite[Theorem 3.2]{chong2021pi11}, we can build the following sequence of countable topped model of $\RCA_0 + \BSig_2$: $\M_0 = \M \subseteq \M_1 \subseteq \M_2 \subseteq \dots$ satisfying that 
\begin{itemize}
    \item[(1)] for every $i$, every largeness notion $\L \in \M_i$ and every set $A$ that is $\Delta_2^0$ in $\M_i$, there is either a $j$ such that $\L$ is no longer a largeness notion in $\M_j$, or a set $G \in \M_j$ such that $G$ is an $M$-infinite $\L$-grouping for $A$.
    \item[(2)] for every $i$, every uniform sequence of sets $\vec{R} \in \M_i$, there is some~$j$ and an $M$-infinite $\vec{R}$-cohesive set~$C \in \M_j$.
\end{itemize}

Let $\M' = \bigcup_{n \in \omega} \M_n$. Clearly, $\M' \models \RCA_0 + \BSig_2 + \COH + \Delta^0_2\mbox{-}\GP^1_2 + \neg \phi(A)$. By \Cref{lem:gp-decomposition}, $\M' \models \GP^2_2$.
Therefore, $\GP_2^2$ is $\Pi_1^1$-conservative over $\RCA_0 + \BSig_2$.
\end{proof}

\begin{remark}
The above construction shows that $\RCA_0 + \BSig_2$ proves the $\ll_2$-basis theorem for $\Delta^0_2\mbox{-}\GP^1_2$ (in the sense of \cite{fiori2021isomorphism}), and thus $\GP^2_2$ also admits $\ll_2$-basis theorem within $\RCA_0 + \BSig_2$.
This implies that the above $\Pi_1^1$-conservation theorem admits poly-time proof transformation.
In general, an upcoming result by Ikari and Yokoyama (see \cite{Ikari-PhD, Ik-Yo}) shows that any $\RT$-like principle provable from $\WKL_0+\RT^2_2$ admits poly-time proof transformation if it is $\Pi_1^1$-conservative over $\RCA_0 + \BSig_2$.
\end{remark}

\subsection{Finite grouping principle with $T$-apartness}\label[section]{sect:finite-grouping-principle}

Thanks to the generalized Parsons theorem, we can turn $\Pi^1_1$ conservation of the grouping principle into a quantitative finitary version based on $\bbomega^n$-largeness$(T)$.

\begin{definition}[Finite grouping principle for $T$]
Let $\L_0, \L_1$ be largeness notions and $f : [X]^n \to k$ a coloring. A finite sequence of $\L_0$-large sets $F_0 < F_1 < \dots < F_{k-1}$ is an $(\L_0, \L_1)$-grouping$(T)$ for $f$ if: 
\begin{itemize}
    \item for any $H \subseteq_{fin} \mathbb{N}$, if $H \cap F_i \neq \emptyset$ for every $i < k$, then $H \in \L_1$, and,
    \item for every $H \in [k]^n$, $f$ is monochromatic on $\prod_{i \in H} F_i$
    \item for every $i < j < k-1$, $F_i$ and $F_{j}$ are $T$-apart.
\end{itemize}

Let $\pFGP{\L_0}{\L_1}^n_k(T)$ be the statement that for any infinite set $X \subseteq \mathbb{N}$, there exists some finite set $Y \subseteq X$ such that for any coloring $f: [Y]^n \to k$, there exists an $(\L_0, \L_1)$-grouping$(T)$ for $f$.
\end{definition}

Recall that $\param$ and $\Param$ are the first-order and second-order parameters appearing in the formula~$T$.

\begin{proposition}\label[proposition]{thm:T-FGP22}
Let $\L_0$ and $\L_1$ be $\Delta^{\param,\Param}_0$-definable largeness notions provably in $\RCA_0 + \BSig_2 + T$. Then $\RCA_0 + \BSig_2 + T$ proves $\pFGP{\L_0}{\L_1}^2_2(T)$.
\end{proposition}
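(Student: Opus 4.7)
The plan is to derive $\pFGP{\L_0}{\L_1}{T}$ by first establishing it in the stronger system $\WKL_0 + \BSig_2 + T + \GP^n_k$ and then transferring it via $\Pi^1_1$-conservation. First, observe that $\pFGP{\L_0}{\L_1}{T}$ is $\Pi^1_1$ in the language enriched with the constants $a$ and $A$: the outer quantifier is $\forall X$ (restricted to infinite sets), and the inner condition \qt{$\exists Y \finsub X$ such that for every $f : [Y]^n \to k$ there is an $(\L_0, \L_1)$-grouping$(T)$ for $f$} is $\Sigma^0_0$, since $\L_0$ and $\L_0$ are $\Delta^{a,A}_0$-definable, $Y$ is finite, and the \qt{every transversal is $\L_1$-large} clause is a bounded universal quantification. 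By \Cref{th:gp22-wkl-pi11-conservation} (appropriately generalized to $\GP^n_k$), $\WKL_0 + \GP^n_k$ is $\Pi^1_1$-conservative over $\RCA_0 + \BSig_2$ in this language; since $T$ is $\Pi^0_3$, this conservation routinely extends to the base $\RCA_0 + \BSig_2 + T$ for $\Pi^1_1$ consequences.

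In $\WKL_0 + \BSig_2 + T + \GP^n_k$, I would first prove the infinitary analogue: for every coloring $f : [\NN]^n \to k$, there is an infinite pairwise $T$-apart $\L_0$-grouping for $f$. Apply $\GP^n_k$ with the largeness notion $\L_0$ to obtain an infinite $\L_0$-grouping $E_0 < E_1 < \cdots$ for $f$, and then thin this sequence to enforce $T$-apartness by choosing a subsequence $E_{i_0}, E_{i_1}, \ldots$ inductively as follows. Having selected $E_{i_j}$, use $T$ together with $\BSig_2$ to extract a uniform bound $b$ with $\forall x < \max E_{i_j} \exists y < b \forall z\, \theta(x, y, z)$, and let $E_{i_{j+1}}$ be the next $E_\ell$ with $\min E_\ell > b$.

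Next, derive the finite statement $\pFGP{\L_0}{\L_1}{T}$ via a $\WKL_0$-style compactness argument. Suppose for contradiction there is an infinite $X$ such that every finite $Y \finsub X$ admits some coloring $f_Y : [Y]^n \to k$ without any $(\L_0, \L_1)$-grouping$(T)$. The tree of such \qt{bad} partial colorings on initial segments of $X$ is infinite and finitely branching, so $\WKL_0$ yields a single coloring $f : [X]^n \to k$ whose restriction to every finite initial segment of $X$ has no $(\L_0, \L_1)$-grouping$(T)$. Applying the infinitary analogue, $f$ admits an infinite $T$-apart $\L_0$-grouping $F_0 < F_1 < \cdots$, and a sufficiently long initial segment $F_0, \ldots, F_{\ell-1}$ produces an $(\L_0, \L_1)$-grouping$(T)$ on $Y = F_0 \cup \cdots \cup F_{\ell-1}$, contradicting the badness of $f$ on $Y$.

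The main obstacle lies in the choice of $\ell$: one must guarantee that every transversal of $F_0, \ldots, F_{\ell-1}$ is $\L_1$-large for an arbitrary largeness notion $\L_1$, and closure of $\L_1$ under supersets is not by itself enough, since a transversal can replace $\min F_i$ by a larger element of $F_i$. My preferred approach is to package the $\L_0$-largeness of blocks together with the $\L_1$-transversal condition into a single tailored largeness notion $\L^\ast$, and apply $\GP^n_k$ directly with $\L^\ast$; verifying that $\L^\ast$ is indeed a largeness notion within $\RCA_0 + \BSig_2 + T$ relies on the combinatorial closure properties established in \Cref{sect:largeness-t}.
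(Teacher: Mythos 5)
Your overall strategy matches the paper exactly: observe that $\pFGP{\L_0}{\L_1}{T}$ is $\Pi^1_1$ (with the constants $a, A$ regarded as part of the language), reduce to proving it in $\WKL_0 + \BSig_2 + T + \GP^2_2$ via \Cref{th:gp22-wkl-pi11-conservation}, and then apply $\GP$ with $\L_0$ and thin the resulting infinite grouping via $\BSig_2 + T$ to obtain pairwise $T$-apartness. The thinning step you describe is precisely what the paper does (and is sound, since $T$-apartness of consecutive blocks suffices by transitivity).

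The gap is exactly where you flag it, at the choice of $\ell$, and your proposed fix does not close it. You suggest folding the $\L_1$-transversal requirement into a new largeness notion $\L^\ast$ to be fed to $\GP^n_k$, but the transversal condition is inherently a property of a \emph{tuple} of blocks, whereas $\L$-largeness in the grouping principle is a predicate on individual finite sets. There is no apparent way to encode "every choice of one element from each block so far is $\L_1$-large" as a property of a single set, so it is unclear that $\L^\ast$ is even definable, let alone that it is a largeness notion. The paper instead resolves the issue with a second application of $\WKL_0$: given the infinite $T$-apart $\L_0$-grouping $Y_0 < Y_1 < \cdots$ for a fixed $f$, it considers the finitely-branching tree $S$ of sequences $\sigma$ with $\sigma(i) \in Y_i$ and $\{\sigma(i) : i < |\sigma|\}$ not $\L_1$-large. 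An infinite branch of $S$ would yield an infinite set none of whose finite subsets is $\L_1$-large (using closure of $\L_1$ under supersets), contradicting that $\L_1$ is a largeness notion; hence $S$ has no infinite branch, and $\WKL_0$ gives a bound $n$ on its height. Then $Y_0 < \cdots < Y_{n-1}$ is the required $(\L_0, \L_1)$-grouping$(T)$ for $f$, since every transversal contains some $\L_1$-large "selector" and $\L_1$ is closed upward. Finally, one last $\WKL_0$-compactness over colorings $f$ (as in \Cref{lem:compactness-big2-wkl}) produces the uniform finite $Y \subseteq X$. You should replace the $\L^\ast$ idea with this tree argument; without it the proof is incomplete.
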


\begin{proof}
$\forall \param \forall \Param(T \rightarrow \pFGP{\L_0}{\L_1}^2_2(T))$ is a $\Pi_1^1$ statement. So by $\Pi_1^1$ conservation of $\WKL_0 + \GP_2^2$ over $\RCA_0 + \BSig_2$, it is sufficient to prove the result using $\WKL_0 + \BSig_2 + T + \GP_2^2$.

By $\GP_2^2$, for any infinite set $X$ and for any coloring $f : [X]^2 \to 2$ there exists an infinite $\L_0$-grouping for $f$.
By $\BSig_2 + T$, we can furthermore assume the blocks of these groupings to be $T$-apart. 

For each of these grouping $Y_0 < Y_1 < \dots$, we can consider the following finitely branching tree $S$ of finite sequences $\sigma$ such that $\sigma(i) \in Y_i$ for all $i < |\sigma|$ and such that the finite set $\{\sigma(i) | i < |\sigma|\}$ is not $\L_1$-large. Since $\L_1$ is a largeness notion, the tree $S$ has no infinite branch, and therefore by $\WKL_0$, there is some bound $n$ on the depth of the tree. So $Y_0 < Y_1 < \dots < Y_{n-1}$ is an $(\L_0, \L_1)$-grouping$(T)$ for the corresponding coloring.

And again by $\WKL_0$, there exists some finite set $Y \subseteq X$ such that for any coloring $f : [Y]^2 \to 2$ there exists an $(\L_0, \L_1)$-grouping$(T)$ for $f$. 

So $\WKL_0 + \BSig_2 + T + \GP_2^2 \vdash \pFGP{\L_0}{\L_1}^2_2(T)$ and therefore $\RCA_0 + \BSig_2 + T \vdash \pFGP{\L_0}{\L_1}^2_2(T)$.
\end{proof}

We will use the finite grouping principle with $T$-apartness under the following form.
Note that in the following proposition, the integer~$n$ might depend not only on~$k$ and $\ell$, but also on~$T$.
However, by adapting the direct combinatorial proof of Ko\l odziejczyk and Yokoyama~\cite[Section 2.1]{kolo2020some} of the finitary grouping principle with explicit bounds, one obtains a bound which does not depend on~$T$ (see \Cref{rem:gp-explicit-bounds}).

\begin{proposition}\label[proposition]{prop:fgp-apartness}
For any $k,\ell \in \omega$, there exists $n \in \omega$ such that $\ISig_1$ proves that $\forall Z \finsub \NN$, if $Z \mbox{ is } \bbomega^n\mbox{-large}(T) \mbox{ and exp-sparse}$ then
$$\forall f : [Z]^2 \to 2, \mbox{ there exists an } (\bbomega^k\mbox{-large}(T), \bbomega^\ell\mbox{-large}(T))\mbox{-grouping}(T) \mbox{ for }f$$
\end{proposition}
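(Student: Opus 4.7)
The plan is to derive \Cref{prop:fgp-apartness} as a direct application of the generalized Parsons theorem for $\WKL_0 + \BSig_2 + T$ (\Cref{thm:generalized-parsons-p4-bsig2-largeness}) to the infinitary finite grouping principle supplied by \Cref{thm:T-FGP22}. In other words, the infinitary conservation statement $\pFGP{\omega^k\mbox{-large}(T)}{\omega^\ell\mbox{-large}(T)}{T}$ is exactly of the shape to which the generalized Parsons theorem applies, and the quantitative version we want is precisely its finitary consequence.

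First I would verify that, for each standard $k \in \omega$, the predicate \qt{$X$ is $\omega^k$-large$(T)$} is $\Sigma^0_0$ in the language enriched with the constants $a$ and $A$. This is shown by external induction on $k$ from \Cref{defi:largeness-rca0-bsig2-t}: the base case $X \neq \emptyset$ is bounded, the inductive clauses introduce only quantifiers bounded by elements of the finite set $X$, and the $T$-apartness relation is itself $\Sigma^0_0$. Since $k$ and $\ell$ are standard, it follows that the formula \qt{for every $f : [Y]^2 \to 2$ there exists an $(\omega^k\mbox{-large}(T), \omega^\ell\mbox{-large}(T))$-grouping$(T)$ for $f$ inside $Y$} is also $\Sigma^0_0$, because the universal quantifier over $f$ ranges over a finite set and the existential quantifier over the grouping is bounded by the $M$-finite set $Y$.

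Next I would rewrite $\pFGP{\omega^k\mbox{-large}(T)}{\omega^\ell\mbox{-large}(T)}{T}$ in the form $\forall X (X \mbox{ is infinite} \rightarrow \exists F \finsub X \exists y\, \psi(y, F))$ required by \Cref{thm:generalized-parsons-p4-bsig2-largeness}, where $\psi(y, F)$ is the $\Sigma^0_0$ formula above (the variable $y$ being dummy, e.g.\ $y = \max F$). By \Cref{thm:T-FGP22}, this statement is provable over $\WKL_0 + \BSig_2 + T$. Applying \Cref{thm:generalized-parsons-p4-bsig2-largeness} then yields some $n \in \omega$ such that $\ISig_1$ proves that every $\omega^n$-large$(T)$ and exp-sparse set $Z$ contains a subset $F \subseteq Z$ satisfying $\psi(\max Z, F)$. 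To pass from this to the stated conclusion for an arbitrary $f : [Z]^2 \to 2$, one simply restricts $f$ to $[F]^2$; since $F \subseteq Z$ and the blocks of the resulting grouping are subsets of $F$, the same sequence of blocks is a grouping for $f$ on $Z$. The only step requiring attention is the $\Sigma^0_0$ verification of the largeness$(T)$ and grouping predicates, on which the applicability of the generalized Parsons theorem depends; the remainder is a mechanical extraction of quantitative data from the infinitary conservation result.
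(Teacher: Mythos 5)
Your proposal is correct and follows essentially the same route as the paper: apply the generalized Parsons theorem for $\WKL_0 + \BSig_2 + T$ (\Cref{thm:generalized-parsons-p4-bsig2-largeness}) to the infinitary statement $\pFGP{\omega^k\mbox{-large}(T)}{\omega^\ell\mbox{-large}(T)}{T}$ supplied by \Cref{thm:T-FGP22}, noting (as the paper does via \Cref{prop:largeness-bsig2-largeness}) that $\omega^k$-largeness$(T)$ and $\omega^\ell$-largeness$(T)$ are provably largeness notions so that \Cref{thm:T-FGP22} applies. Your explicit verification that the inner predicate is $\Sigma^0_0$ and the restriction argument for passing from $F$ back to $Z$ are details the paper leaves implicit, but the argument is the same.
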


\begin{proof}
Using \Cref{thm:generalized-parsons-p4-bsig2-largeness}, it is sufficient to show that $\WKL_0 + \BSig_2 + T$ proves that $\forall X$ if $X \mbox{ is infinite}$ then 
$$\exists Y \subseteq X \forall f : [Y]^2 \to 2, \mbox{ there exists an }(\bbomega^k\mbox{-large}(T), \bbomega^\ell\mbox{-large}(T))\mbox{-grouping}(T) \mbox{ for }f$$  

Which we have by \Cref{thm:T-FGP22} (And by \Cref{prop:largeness-bsig2-largeness}, $\bbomega^k\mbox{-largeness}(T)$ and $\bbomega^\ell\mbox{-largeness}(T)$ are largeness notions provably in $\RCA_0 + \BSig_2 + T$).
\end{proof}

\begin{remark}\label[remark]{rem:gp-explicit-bounds}
The proof of \Cref{prop:fgp-apartness} involves a generalized Parsons theorem (\Cref{thm:generalized-parsons-p4-bsig2-largeness}) which does not provide explicit bounds for the existence of a grouping. Ko\l odziejczyk and Yokoyama~\cite[Section 2.1]{kolo2020some} gave a direct combinatorial proof of the finitary grouping principle, with explicit bounds. 

Largeness$(T)$ shares many combinatorial features with the standard notion of largeness. However, there are some differences, which impact the explicit bounds of the pigeonhole lemma for largeness$(T)$ (\Cref{lem:largeness-rt1k}). Indeed, to obtain an $\bbomega^n$-large$(T)$ set after an application of~$\RT^1$, one starts with an $\bbomega^{2n}$-large$(T)$ set, while an $\bbomega^{n+1}$-large set is sufficient in the case of standard largeness. 

Propagating this difference to the proof of Ko\l odziejczyk and Yokoyama~\cite[Section 2.1]{kolo2020some}, one obtain (with exp-sparsity) $\bbomega^{2n}$-largeness$(T)$ for \cite[Lemma 2.5]{kolo2020some}, hence $\bbomega^{4n+1}$-largeness$(T)$ for \cite[Lemma 2.6]{kolo2020some}, $\bbomega^{16n+5}$-largeness$(T)$ for \cite[Lemma 2.7]{kolo2020some} and $\bbomega^{16^k\times(n+1)}$-largeness$(T)$ for \cite[Theorem 2.4]{kolo2020some}. 

For $f : X \to 2$ a coloring of singletons, the existence of an $(\bbomega^n,2)$-grouping$(T)$ for $g : (x,y) \mapsto f(x)$ yields an $\bbomega^n$-large$(T)$ $f$-homogeneous subset of $X$ (by taking the first block of the grouping). We shall see in \Cref{sect:conclusion} that for all $n \in \omega$ there exists some $\Pi_3^0$-formula $T$ and some $\bbomega^{2n-1}$-large$(T)$ set $X$ such that $X$ is not $\bbomega^n$-large$(T,\RT^1_2)$, therefore the bound obtained for the existence of an $(\bbomega^n\mbox{-large}(T),\bbomega^k\mbox{-large}(T))$-grouping$(T)$ is tight in the sense that it is not possible to obtain one with $X$ $\bbomega^{n + h(k)}$-large$(T)$ for some $h : \omega \to \omega$ as in \cite{kolo2020some}. 

\end{remark}

\section{Applications to $\RT$-like theorems}\label[section]{sect:applications}

In this section, we apply the framework developed in \Cref{sect:framework-pi04} to prove that the Erd\H{o}s-Moser theorem, the Ascending Descending Sequence principle and Ramsey's theorem for pairs are $\forall \Pi^0_4$ conservative over~$\RCA_0 + \BSig_2$. 

\subsection{$\forall \Pi^0_4$ conservation of~$\EM$}

As explained in \Cref{sect:prerequisites}, $\RT^2_2$ can be decomposed into two combinatorially simpler statements, namely, $\ADS$ and $\EM$. Thanks to the amalgamation theorem (\Cref{thm:amalgamation}), since~$\RCA_0 + \ADS$ is a $\Pi^1_1$ conservative extension of~$\RCA_0 + \BSig_2$, the heart of the main question lies in the first-order part of~$\EM$.

As in \Cref{prop:fgp-apartness}, the bound $k_n$ depends on~$n$, but also on~$T$ by default. However, the alternative combinatorial proof yields explicit bounds which do not depend on~$T$ (see \Cref{rem:em-explicit-bounds}).

\begin{proposition}\label[proposition]{prop:largeness-em}
For any $n \in \omega$, there exists some $k_n \in \omega$ such that $\ISig_1$ proves that $\forall X \subseteq_{fin} \mathbb{N}$ if $X$ is $\bbomega^{k_n}$-large$(T)$ and exp-sparse then $X$ is $\bbomega^{n}$-large$(T,\EM)$.
\end{proposition}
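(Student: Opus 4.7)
The plan is to proceed by external induction on $n$, using the generalized Parsons theorem (\Cref{thm:generalized-parsons-p4-bsig2-largeness}) to reduce the finitary statement to an infinitary one provable in $\WKL_0 + \BSig_2 + T$, and then applying the finite grouping principle with $T$-apartness (\Cref{thm:T-FGP22}) in the inductive step.

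First, I would reformulate the goal via Parsons. The property ``$F$ is $\omega^n$-large$(T,\EM)$'' can be written as a $\Sigma_0$ formula $\psi(F)$, using only bounded quantifiers: a bounded universal $\forall f \leq 2^{\binom{\max F}{2}}$ over codes of colorings of $[F]^2$, and a bounded existential $\exists Y \leq 2^{\max F}$ over codes of subsets of $F$, with the inner predicate ``$Y$ is $\omega^n$-large$(T)$ and $f$-transitive'' being $\Sigma_0$. By \Cref{thm:generalized-parsons-p4-bsig2-largeness}, it therefore suffices to prove in $\WKL_0 + \BSig_2 + T$ that every infinite set $X$ contains a finite $F \subseteq X$ with $\psi(F)$. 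Moreover, $\omega^n$-largeness$(T,\EM)$ is closed under supersets, since for any $Z \subseteq Z'$, any coloring $f : [Z']^2 \to 2$ restricts to $[Z]^2$ and a transitive subset for the restriction remains $f$-transitive. Hence the Parsons conclusion applied at $Z$ itself yields that $Z$ is $\omega^n$-large$(T,\EM)$.

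The infinitary statement is proved by external induction on $n$. The base case $n = 0$ is witnessed by any singleton. For the inductive step, given infinite $X$ and a coloring $f : [\NN]^2 \to 2$, I would apply \Cref{thm:T-FGP22} with appropriate parameters to extract, within a finite subset of $X$, a grouping $F_0 < F_1 < \dots < F_{L-1}$ of pairwise $T$-apart $\omega^{k_n}$-large$(T)$ blocks with $f$ monochromatic on each $F_i \times F_j$ for $i < j$, and with $L$ large. The inductive hypothesis applied inside each block yields an $\omega^n$-large$(T)$ $f$-transitive subset $G_i \subseteq F_i$. The reduced coloring $g : [L]^2 \to 2$ defined by $g(i,j) = f(x,y)$ for $x \in F_i$, $y \in F_j$ is well-defined by the grouping property, and the finite Erd\H{o}s-Moser theorem (provable in $\RCA_0$ by the classical greedy argument) yields a $g$-transitive subset $I \subseteq \{0, \dots, L-1\}$ of size $\Omega(\log_2 L)$. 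The union $H = \bigcup_{i \in I} G_i$ is $f$-transitive by case analysis on a triple $a < b < c \in H$: triples within a single block are handled by transitivity of the corresponding $G_i$, while triples spanning multiple blocks have each of their cross-block $f$-values determined by $g$, so transitivity of $g$ on $I$ concludes. The $\omega^{n+1}$-largeness$(T)$ of $H$ follows from \Cref{lem:large-set-of-large-sets-is-large}, using pairwise $T$-apartness of the $G_i$'s and a suitable size bound on $I$.

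The main obstacle is the quantitative coordination between the two stages. Since the finite Erd\H{o}s-Moser step only yields $|I| \approx \log_2 L$, the number of blocks $L$ (and hence the choice of $k_{n+1}$) must be exponentially large enough that $|I|$, combined with the $\omega^n$-large$(T)$ blocks $G_i$, suffices via \Cref{lem:large-set-of-large-sets-is-large} to produce $\omega^{n+1}$-largeness$(T)$ of $H$. Carrying this bookkeeping through the parameters of \Cref{thm:T-FGP22} and the largeness estimates of \Cref{lem:decompose-largeness} and \Cref{lem:large-set-of-large-sets-is-large} is routine but technical, in the style of the analogous proposition of Patey and Yokoyama~\cite{patey2018proof}.
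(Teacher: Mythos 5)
Your proposal follows the paper's strategy closely: external induction on $n$, extract a grouping of $T$-apart blocks via the finite grouping principle, apply the inductive hypothesis inside each block, reduce $f$ to a coloring $g$ of the block indices, and union up a transitive transversal. The $f$-transitivity argument for the union is also the same one. However, there is a genuine gap in the final quantitative step, and it is not merely ``routine but technical'' bookkeeping.

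You propose extracting the transitive index set $I$ via the classical finite Erd\H{o}s--Moser theorem, which gives $|I| \approx \log_2 L$ where $L$ is the number of blocks. To conclude that $H = \bigcup_{i \in I} G_i$ is $\omega^{n+1}$-large$(T)$ (either directly from the definition, or via \Cref{lem:large-set-of-large-sets-is-large}), you need roughly $|I| > \min H = \min G_{\min I}$. But $L$ is controlled by requiring transversals of the grouping to be $\L_1$-large for an \emph{externally fixed} largeness notion $\L_1 = \omega^{\ell}$-large$(T)$: this forces $L$ to grow only polynomially in $\min G_0$. Hence $\log_2 L$ is $O(\log \min H)$, which is much smaller than $\min H$, and no choice of external parameter in \Cref{thm:T-FGP22} can force $L$ to be exponential in the minimum. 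Your ``suitable size bound on $I$'' cannot be met. (Exp-sparsity does not rescue this; it controls the gaps inside blocks, not the number of blocks relative to their minimum.)

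The paper avoids this by not using the classical logarithmic Erd\H{o}s--Moser bound at all. Instead it chooses $\L_1 = \omega^6$-large and invokes the Ketonen--Solovay partition theorem (\Cref{thm:ketonen-solovay}): an $\omega^6$-large set has an $\omega$-large \emph{homogeneous} (hence a fortiori transitive) subset for any $2$-coloring. Crucially, $\omega$-largeness is a self-calibrating notion — $|Y| > \min Y$ — so the partition theorem delivers exactly $|I| > \min Y_{\min I}$ with a fixed polynomial overhead ($\omega^6 \to \omega^1$), independent of where the set starts. That is the ingredient your proposal is missing, and substituting it for the finite Erd\H{o}s--Moser step closes the gap. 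Two smaller points: applying the ``inductive hypothesis inside each block'' requires the \emph{finitary} (Parsons-processed) form of the IH, since the blocks are finite, and you also need the blocks to be exp-sparse for that form to apply — both are handled automatically if you use \Cref{prop:fgp-apartness} directly, as the paper does, rather than re-deriving the Parsons step from \Cref{thm:T-FGP22} yourself.
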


\begin{proof}
By external induction on $n$:

Case $n = 0$: Any $\bbomega^0$-large$(T)$ set is $\bbomega^0$-large$(T, \EM)$. Thus, take $k_0 = 0$.

Case $n > 0$, assume the property to be true at rank $n - 1$. By \Cref{prop:fgp-apartness}, there is some $k_n \in \omega$ such that if $X$ is $\bbomega^{k_n}$-large$(T)$ and exp-sparse, then 
$$\forall f : [X]^2 \to 2, \mbox{ there exists an } (\bbomega^{k_{n-1}}\mbox{-large}(T), \bbomega^{6}\mbox{-large})\mbox{-grouping}(T) \mbox{ for }f$$
Note that $\bbomega^6$-largeness is not $\bbomega^6$-largeness$(T)$ in the grouping above, but any $\bbomega^6$-large$(T)$ set is also $\bbomega^6$-large.

We need to check that $X$ is $\bbomega^{n}$-large$(T,\EM)$. Let $f : [X]^2 \to 2$ be an instance of $\EM$. There exists an $(\bbomega^{k_{n-1}}$-large$(T), \bbomega^{6}$-large)-grouping$(T)$ $X_0 < X_1 < \dots < X_{\ell-1}$ for $f$. 

By the inductive hypothesis, each block $X_i$ is $\bbomega^{n-1}$-large$(T,\EM)$, so for each $i < \ell$ there exists $Y_i \subseteq X_i$ which is $\bbomega^{n-1}$-large$(T)$ and transitive for $f$. By definition of a grouping, $f$ induces a tournament on the pairs of blocks and $\{\min Y_i : i < \ell\}$ is $\bbomega^{6}$-large and therefore $\bbomega$-large$(\EM)$ by Ketonen and Solovay (see \Cref{thm:ketonen-solovay}). 

Therefore, there is a subset $I \subseteq \{0, \dots, l-1\}$ such that $f$ is transitive on $\{\min Y_i : i \in I\}$ which is $\bbomega$-large. The set $Y = \bigcup_{i \in I} Y_i$ is $f$-transitive and $\bbomega^{n}$-large$(T)$.
We conclude by induction.
\end{proof}

\begin{remark}\label[remark]{rem:em-explicit-bounds}
One can propagate the explicit bounds of the grouping principle computed in \Cref{rem:gp-explicit-bounds}
to obtain an explicit bound for \Cref{prop:largeness-em}: if $X$ is $\bbomega^{(16^6 + 1)^n}$-large$(T)$, then it is $\bbomega^n$-large$(T,\EM)$.

Note that we obtain an exponential upper bound for largeness$(T, \EM)$ while the upper bound for regular largeness$(\EM)$ is polynomial. 
Since the bounds on the grouping principle are tight, one would need to get rid of the applications of the grouping principle to obtain a better upper bound, which seems unlikely.
\end{remark}

\begin{theorem}\label[theorem]{thm:em-pi04-conservation}
$\WKL_0 + \EM$ is $\forall \Pi_4^0$-conservative over $\RCA_0 + \BSig_2$.    
\end{theorem}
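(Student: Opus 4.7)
The plan is to apply the framework developed in \Cref{sect:framework-pi04}, specifically \Cref{thm:largeness-to-conservation}, which reduces $\forall \Pi^0_4$ conservation of $\WKL_0 + \BSig_2 + \Gamma$ over $\RCA_0 + \BSig_2$ for any Ramsey-like-$\Pi^1_2$ statement $\Gamma$ to proving that $\omega^n$-largeness$(T, \Gamma)$ is a largeness notion provably in $\RCA_0 + \BSig_2 + T$, for every $n \in \omega$ and every $\Pi^0_3$ sentence $T$. First I would observe that $\EM$ is a Ramsey-like-$\Pi^1_2$ statement in the sense of \Cref{def:ramsey-like-Pi12-formula}: $f$-transitivity of a set $Y$ is expressible as $(\forall G \finsub Y)\,\Psi_0(f\uh[G]^2, G)$, where $\Psi_0$ is the $\Delta^0_0$ formula asserting that for every $\{x,y,z\} \in [G]^3$ with $x<y<z$ and every $i<2$, $f(x,y)=f(y,z)=i$ implies $f(x,z)=i$.

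Next I would verify the largeness hypothesis of \Cref{thm:largeness-to-conservation} for $\Gamma = \EM$. This is exactly the content of the key quantitative estimate \Cref{prop:largeness-em}: for every $n \in \omega$ there is some $k_n \in \omega$ such that $\ISig_1$ (and a fortiori $\RCA_0 + \BSig_2 + T$) proves that every exp-sparse $\omega^{k_n}$-large$(T)$ set is $\omega^n$-large$(T, \EM)$. Combining this with \Cref{prop:largeness-bsig2-largeness-sparsity}, which ensures that exp-sparse $\omega^{k_n}$-largeness$(T)$ is itself a largeness notion provably in $\RCA_0 + \BSig_2 + T$, one concludes that $\omega^n$-largeness$(T, \EM)$ is a largeness notion provably in $\RCA_0 + \BSig_2 + T$ for every $n \in \omega$ and every $\Pi^0_3$ sentence $T$.

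Applying \Cref{thm:largeness-to-conservation} then directly yields that $\WKL_0 + \BSig_2 + \EM$ is a $\forall \Pi^0_4$ conservative extension of $\RCA_0 + \BSig_2$. To pass from this to the stated theorem, one simply notes that $\WKL_0 + \EM$ is a subtheory of $\WKL_0 + \BSig_2 + \EM$, so every $\forall \Pi^0_4$ consequence of $\WKL_0 + \EM$ is a consequence of $\WKL_0 + \BSig_2 + \EM$, and thus already follows from $\RCA_0 + \BSig_2$. There is no substantial obstacle remaining at this stage: the entire difficulty of the result has been absorbed into the preceding machinery, namely the generalized Parsons theorem for $\WKL_0 + \BSig_2 + T$ (\Cref{thm:generalized-parsons-p4-bsig2-largeness}), the density-to-conservation bridge (\Cref{thm:ph-conservative,thm:largeness-to-conservation}), the $\Pi^1_1$ conservation of the grouping principle (\Cref{th:gp22-wkl-pi11-conservation}) yielding the finite grouping principle with $T$-apartness (\Cref{prop:fgp-apartness}), and the inductive bound \Cref{prop:largeness-em}. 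The present theorem is then a direct corollary.
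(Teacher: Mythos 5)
Your proposal is correct and follows essentially the same route as the paper: the paper's proof likewise combines \Cref{prop:largeness-em} with \Cref{prop:largeness-bsig2-largeness-sparsity} to establish that $\omega^n$-largeness$(T,\EM)$ is a largeness notion provably in $\RCA_0 + \BSig_2 + T$, and then invokes \Cref{thm:largeness-to-conservation}. Your additional checks (that $\EM$ fits \Cref{def:ramsey-like-Pi12-formula} and that $\WKL_0 + \EM \subseteq \WKL_0 + \BSig_2 + \EM$) are left implicit in the paper but are correct.
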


\begin{proof}
By \Cref{prop:largeness-em}, and \Cref{prop:largeness-bsig2-largeness-sparsity}, for all $n \in \omega$ and $T$ a $\Pi_3^0$-formula, $\RCA_0 + \BSig_2 + T$ proves that $\bbomega^{n}$-largeness$(T,\EM)$ is a largeness notion.
So by \Cref{thm:largeness-to-conservation}, $\EM$ is a $\forall\Pi_4^0$-conservative extension of $\RCA_0 + \BSig_2$.
\end{proof}

We are now ready to prove the main theorem of this article:

\begin{reptheorem}{thm:rt22-pi04-conservation}
$\WKL_0 + \RT^2_2$ is $\forall \Pi_4^0$-conservative over $\RCA_0 + \BSig_2$.     
\end{reptheorem}
\begin{proof}
Since $\ADS$ is $\Pi_1^1$-conservative over $\RCA_0 + \BSig_2$, it is in particular $\forall \Pi_4^0$-conservative. By the amalgamation theorem (\Cref{thm:amalgamation}) and the fact that $\RCA_0 \vdash \RT_2^2 \leftrightarrow \ADS + \EM$ (see Bovykin and Weiermann~\cite{bovykin2017strength}), remains to show that $\WKL_0 + \EM$ is also $\forall \Pi_4^0$-conservative over $\RCA_0 + \BSig_2$, which is \Cref{thm:em-pi04-conservation}.
\end{proof}

\subsection{$\forall \Pi^0_4$ conservation of~$\RT^2_2$}


In this section, we give an inductive proof in~$\RCA_0 + \BSig_2 + T$ that $\bbomega^n$-largeness$(T, \RT^2_2)$ is a largeness notion. We start with a technical lemma.


\begin{lemma}\label[lemma]{lem:grouping-to-homogeneous}
Fix~$n, k \in \omega$ with~$n \geq 2$, and suppose that $\ISig_1$ proves that every~$\bbomega^k$-large$(T)$ set is $\bbomega^{n-1}$-large$(T, \RT^2_2)$. Then there is some~$\ell \in \omega$ such that $\ISig_1$ proves if~$X$ is $\bbomega^\ell$-large$(T)$, then for every coloring $f : [X]^2 \to 2$ one of the following holds:
\begin{itemize}
    \item[(1)] There exists an $\bbomega^n$-large$(T)$ $f$-homogeneous set $Y \subseteq X$
    \item[(2)] There exists $\bbomega^{n-1}$-large$(T)$ sets $Y_0, Y_1 \subseteq X$ such that $Y_c$ is $f$-homogeneous for color~$c$ for each~$c < 2$.
\end{itemize}
\end{lemma}
\begin{proof}
By \Cref{prop:fgp-apartness}, there is some~$\ell \in \omega$ such that $\ISig_1$ proves if~$X$ is $\bbomega^\ell$-large$(T)$, then for every~$f : [X]^2 \to 2$, there is an $(\bbomega^{k}\mbox{-large}(T), \bbomega^{2 \cdot k}\mbox{-large}(T))$-grouping$(T)$ for~$f$.
Let~$X_0 < X_1 < \dots < X_{s-1}$ be this grouping for such an $f$.

By assumption, each~$X_i$ is $\bbomega^{n-1}$-large$(T, \RT^2_2)$, so for each~$i < s$, there exists some $\bbomega^{n-1}$-large$(T)$ set~$Z_i \subseteq X_i$ which is $f$-homogeneous for some color~$c_i < 2$. 

Since $\{\min Z_i : i < s \}$ is $(\bbomega^{2 \cdot k})\mbox{-large}(T)$ (by definition of a grouping), by \Cref{lem:largeness-rt1k}, there exists some color $c < 2$ such that $\{\min Z_i : i < s \wedge c_i = c\}$ is $\bbomega^k\mbox{-large}(T)$. Let $I = \{i < s : c_i = c\}$. 

The coloring $f$ induces a coloring on $\{\min Z_i : i \in I\}$ which is $\bbomega^k$-large$(T)$, so there is a subset $I' \subseteq I$ such that $\{\min Z_i : i \in I'\}$ is $\bbomega^{n-1}\mbox{-large}(T)$ and $f$-homogeneous for some color $d$. We have two cases:

\begin{itemize}
    \item If $c = d$, then, since~$n \geq 2$,  $\{\min Z_i : i \in I'\}$ is $\bbomega^1$-large$(T)$. In particular, $\card I' > \min Z_i$, and the sets $(Z_i)_{i \in I'}$ are pairwise $\theta$-apart and $\bbomega^{n-1}\mbox{-large}(T)$, so $Y = \bigcup_{i \in I'} Z_i$ is $f$-homogeneous and $\bbomega^{n}\mbox{-large}(T)$. We are in case $(1)$.
    \item If $c \neq d$, then, let $Y_c = Z_{\min I}$ and $Y_d = \{\min Z_i : i \in I'\}$. Both of these sets are $\bbomega^{n-1}\mbox{-large}(T)$ and $f$-homogeneous for a different color. We are in case $(2)$.
\end{itemize}
This completes the proof of \Cref{lem:grouping-to-homogeneous}.
\end{proof}

\begin{proposition}\label[proposition]{prop:grouping-to-homogeneous-2}
For any $n \in \omega$, there exists some $k_n \in \omega$ such that $\ISig_1$ proves that, for every $X \subseteq \mathbb{N}$, if $X$ is $\bbomega^{k_n}$-large$(T)$ and exp-sparse then $X$ is $\bbomega^{n}$-large$(T,\RT^2_2)$.
\end{proposition}
\begin{proof}
By external induction on $n$:

Case $n = 0$, take $k_0 = 0$, as a set is $\bbomega^0$-large$(T)$ iff it is non-empty, and any singleton element is homogeneous for any coloring. \\

Case $n = 1$, by \Cref{prop:fgp-apartness}, there is some~$k_1 \in \omega$ such that if $X$ is $\bbomega^{k_1}\mbox{-large}(T)$ and exp-sparse, then 
$$\forall f : [X]^2 \to 2, \mbox{ there exists an } (\bbomega^0\mbox{-large}, \bbomega^{6}\mbox{-large})\mbox{-grouping}(T) \mbox{ for }f$$

We need to check that such an $X$ is $\bbomega^{1}$-large$(T,\RT_2^2)$. Let $f : [X]^2 \to 2$ be a coloring. Let $X_0 < X_1 < \dots < X_{s-1}$ be an $(\bbomega^0\mbox{-large}, \bbomega^{6}\mbox{-large})\mbox{-grouping}(T)$ for $f$. Every $X_i$ is $\bbomega^0$-large and therefore contains an element $x_i$. By definition of a grouping$(T)$, the family $\{x_i: i < s\}$ is $\bbomega^6$-large and all the $x_i$'s are $T$-apart. By \Cref{thm:ketonen-solovay}, there is a subset $I \subseteq \{0, \dots, s-1\}$ such that $\{x_i: i \in  I\}$ is $\bbomega$-large (and therefore $\bbomega$-large$(T)$ since the $x_i$'s are $T$-apart) and $f$-homogeneous.  \\

Case $n > 1$, assume the property to be true at rank $n-1$. Define $\ell_n$ so that \Cref{lem:grouping-to-homogeneous} holds for~$n$ and $k_{n-1}$.
By \Cref{prop:fgp-apartness}, there is some~$k_n \in \omega$ such that if $X$ is $\bbomega^{k_n}\mbox{-large}(T)$ and exp-sparse, then 
$$\forall f : [X]^2 \to 2, \mbox{ there exists an } (\bbomega^{\ell_n}\mbox{-large}(T), \bbomega^{6}\mbox{-large})\mbox{-grouping}(T) \mbox{ for }f$$

We need to check that such an $X$ is $\bbomega^{n}$-large$(T,\RT_2^2)$. Let $f : [X]^2 \to 2$ be a coloring. Let $X_0 < X_1 < \dots < X_{s-1}$ be an $(\bbomega^{\ell_n}\mbox{-large}(T), \bbomega^{6}\mbox{-large})\mbox{-grouping}(T)$ for $f$. If for some $i < s$, there is an $\bbomega^{n}\mbox{-large}(T)$ $f$-homogeneous subset~$Y \subseteq X_i$, then we are done.
Thus, since each $X_i$ is $\bbomega^{\ell_n}\mbox{-large}(T)$, by \Cref{lem:grouping-to-homogeneous} there exists some $\bbomega^{n-1}\mbox{-large}(T)$ subsets $Y_{0,i}$ and $Y_{1,i}$ of $X_i$ that are $f$-homogeneous for the color $0$ and $1$, respectively. 

The coloring $f$ induces a coloring on the $\bbomega^6$-large set $\{ \max X_i : i < s\}$, so by \Cref{thm:ketonen-solovay}, there is a subset $I \subseteq \{0, \dots, s-1\}$ such that $\{ \max X_i : i \in I\}$ is $\bbomega$-large and $f$-homogeneous for some color~$c$. In particular, $\{\min Y_{c,i} : i \in I\}$ is $f$-homogeneous for color~$c$ and $\bbomega$-large. Since $\{\min Y_{c,i} : i \in I\}$ is $\bbomega$-large, $\card I > \min Y_{c,i}$. Furthermore, the sets $(Y_{c,i})_{i \in I}$ are $\bbomega^{n-1}\mbox{-large}(T)$, $f$-homogeneous for color~$c$ and pairwise $\theta$-apart, so $\bigcup_{i \in I} Y_{c,i}$ is $\bbomega^{n}\mbox{-large}(T)$ and $f$-homogeneous.
This completes the proof of \Cref{prop:grouping-to-homogeneous-2}.
\end{proof}

\begin{corollary}\label[corollary]{cor:tetration-bound-rt22}
    Let ${}^x y$ denotes the tetration of $x$ and $y$. Then, for every $n \in \bbomega$, $\ISig_1$ proves that, for every $X \finsub \NN$, if $X$ is $\bbomega^{{}^{(n+1)}16}$-large$(T)$, then it is $\bbomega^{n}$-large$(\RT_2^2,T)$. 
\end{corollary}

\begin{proof}
The proof of \Cref{prop:grouping-to-homogeneous-2} combined with the bounds for the grouping principle obtained in \cite[Remark 4.14]{houerou2026conservation} yield the following bounds: for $(k_n)_{n \in \omega}$ defined inductively by $k_0 = 0$, $k_1 = 16^6$ and $k_{n+1} = 16^6(16^{2k_n} (k_n + 1))$ for $n > 1$, $\ISig_1$ proves that every $\bbomega^{k_n}$-large$(T)$ set is $\bbomega^n$-large$(\RT_2^2,T)$.

We can then prove by induction on $n \in \omega$ that $3k_n + 7 \leq {}^{(n+1)}16$ for every $n \in \bbomega$, hence that $k_n \leq {}^{(n+1)}16$, which implies the statement of this corollary. The result holds for $n = 0$ and $n = 1$, and, assuming that it holds for some $n > 0$, we have:

$$3k_{n+1} + 7 = 3\cdot16^6(16^{2k_n} (k_n + 1)) + 7 < 16^{3k_n + 7} \leq {}^{(n+2)}16$$

This completes the proof of \Cref{cor:tetration-bound-rt22}.
\end{proof}

\section{A conservation theorem over~$\ISig_1$}\label[section]{sect:weak-formulas}

Since~$\BSig_2$ is a $\forall \Pi^0_4$ statement which is strictly stronger than~$\ISig_1$, then $\BSig_2$ (and \emph{a fortiori} $\RT^2_2$) is not a $\forall \Pi^0_4$ conservative extension of~$\ISig_1$. However, in this section, we shall give two proofs that it is the case for a particular kind of $\forall \Pi^0_4$ formulas.
The first proof uses the techniques developed in this article, while the second is based on the fact that every countable model of $\ISig_n$ admits a $\Sigma^0_{n+1}$-elementary cofinal extension which is a model of $\BSig_{n+1}$.

\begin{definition}
A formula is \emph{weakly $\forall \Pi^0_4$} if it is of the form
$$
\forall \Param\forall \param \exists x \forall y \exists x' < x \forall y' < y \exists z \theta(\Param\uh_z, \param, x', y', z)
$$
where~$\theta$ is a $\Sigma^0_0$ formula.
\end{definition}

Intuitively, weakly $\forall \Pi^0_4$ formulas are $\forall \Pi^0_4$ formulas for which the applications of $\BSig_2$ are \qt{hardcoded} in the syntax of the formula.

\begin{proposition}
Over~$\RCA_0 + \BSig_2$, every $\forall \Pi^0_4$ formula is equivalent to a weakly $\forall \Pi^0_4$ formula.
\end{proposition}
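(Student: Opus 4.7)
The plan is to exhibit, for each $\forall\Pi^0_4$ formula, an explicit weakly $\forall\Pi^0_4$ companion with the same matrix and verify the two directions separately, with $\BSig_2$ used only in one direction. After placing an arbitrary $\forall\Pi^0_4$ formula in prenex form and using the $A\uh_z$ convention already in force in the paper, we may write it as
$$\Phi \;\equiv\; \forall A\,\forall a\,\exists x\,\forall y\,\exists z\,\theta(A\uh_z, a, x, y, z)$$
with $\theta$ a $\Sigma^0_0$-formula. The natural candidate, keeping the same $\theta$, is
$$\Psi \;\equiv\; \forall A\,\forall a\,\exists \tilde x\,\forall \tilde y\,\exists x' < \tilde x\,\forall y' < \tilde y\,\exists z\,\theta(A\uh_z, a, x', y', z),$$
which is syntactically weakly $\forall\Pi^0_4$.

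The direction $\Phi \Rightarrow \Psi$ is immediate and uses no collection. Given $A$ and $a$, extract $x_0$ witnessing $\Phi$, set $\tilde x = x_0 + 1$, and observe that for every $\tilde y$ the choice $x' = x_0 < \tilde x$ works: the clause $\forall y' < \tilde y\,\exists z\,\theta(A\uh_z, a, x_0, y', z)$ is a restriction of the $y$-universal statement from $\Phi$ to $y' < \tilde y$.

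The converse $\Psi \Rightarrow \Phi$ is where $\BSig_2$ enters and is the only substantial step. Arguing by contradiction, suppose $\Phi$ fails for some $A, a$, so that $\forall x\,\exists y\,\forall z\,\neg\theta(A\uh_z, a, x, y, z)$. Apply $\Psi$ at these $A, a$ to get a witness $\tilde x = x_0$. For each $x' < x_0$ the formula $\exists y\,\forall z\,\neg\theta(A\uh_z, a, x', y, z)$ is $\Sigma^0_2(A)$ and holds by the failure of $\Phi$; by $\BSig_2$ applied to this bounded family, there is a uniform $y_0$ with
$$\forall x' < x_0\,\exists y < y_0\,\forall z\,\neg\theta(A\uh_z, a, x', y, z).$$
Reapplying $\Psi$ with $\tilde y = y_0$ yields some $x' < x_0$ satisfying $\forall y' < y_0\,\exists z\,\theta(A\uh_z, a, x', y', z)$, which directly contradicts the previous display at the $y < y_0$ provided by the collection.

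The main (mild) obstacle is identifying the correct complexity level for the collection step: one has to recognize that the formula being collected is $\Sigma^0_2(A)$, because its kernel contains the innermost $\forall z\,\neg\theta$, so that $\BSig_2$ and not merely $\BSig_1$ is exactly the principle required. Once this is pinned down, the bounded quantifiers $\exists x' < \tilde x\,\forall y' < \tilde y$ in the weakly $\forall\Pi^0_4$ form are seen to be precisely those produced by the uniform bound coming out of $\BSig_2$, which explains why this restricted syntactic shape captures all $\forall\Pi^0_4$ formulas modulo $\BSig_2$.
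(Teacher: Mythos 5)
Your proof is correct and matches the paper's: you use the same weakly $\forall\Pi^0_4$ companion formula, the forward direction is the same triviality, and $\BSig_2$ enters at exactly the same point. The only cosmetic difference is that the paper proves the converse directly by applying $\RT^1$ (equivalent to $\BSig_2$) to the coloring $y \mapsto x'$ and taking the colour occurring cofinally often, whereas you argue by contradiction using $\Pi^0_1$-collection to bound the counterexample witnesses uniformly; these are interchangeable forms of the same principle.
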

\begin{proof}
Let~$S = \forall \Param \forall \param \exists x \forall y \exists z \theta(\Param\uh_z, \param, x, y, z)$ be a $\forall \Pi^0_4$ formula, where $\theta$ is $\Sigma^0_0$.
Let~$S' = \forall \Param \forall \param \exists x \forall y \exists x' < x \forall y' < y \exists z\theta(\Param\uh_z, \param, x', y', z)$ be its corresponding weakly $\forall \Pi^0_4$ formula. 
$\RCA_0 \vdash S \rightarrow S'$. We now prove that $\RCA_0 + \BSig_2 \vdash S' \rightarrow S$.
Fix some~$\Param$ and some~$\param$. By $S'$, there is some~$x$ such that $\forall y \exists x' < x \forall y' < y \exists z \theta(\Param\uh_z, \param, x', y', z)$.
Let~$f : \NN \to x$ be defined by $f(y) = x'$  such that $\forall y' < y \exists z \theta(\Param\uh_z, \param, x', y', z)$. By $\RT^1$, which is equivalent to~$\BSig_2$, there is some infinite $f$-homogeneous set~$H$ for some color~$x' < x$. We claim that $\forall y' \exists z \theta(\Param\uh_z, \param, x', y', z)$.
Indeed, given~$y'$, there is some~$y > y'$ such that $y \in H$. Since~$f(y) = x'$, then by definition of~$f$, $\exists z \theta(\Param\uh_z, \param, x', y', z)$. This completes the proof.
\end{proof}

For the remainder of this section, fix a $\Sigma^0_0$-formula $\theta(x, y, z)$ (with parameters) and let~$T \equiv \forall x \exists y \forall x' < x \exists y' < y \forall z \theta(x', y', z)$.

\begin{proposition}\label[proposition]{prop:weak-largeness-bsig2-largeness}
For every~$n \in \omega$,
$\RCA_0 + T$ proves that for every~$k \geq 1$, $\bbomega^n \cdot k$-largeness$(T)$ is a largeness notion.
\end{proposition}
\begin{proof}
Exactly the proof of \Cref{prop:largeness-bsig2-largeness}, but the new form of $T$ removes the use of~$\BSig_2$.
\end{proof}

\begin{proposition}\label[proposition]{prop:conservation-t}
If $\WKL_0 + \BSig_2 \vdash \neg T$, then $\ISig_1 \vdash \neg T$.
\end{proposition}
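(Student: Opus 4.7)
The plan is to argue by contradiction via a two-step model-theoretic construction: a $\Sigma^0_2$-elementary cofinal extension to boost $\ISig_1$ to $\BSig_2$, followed by a H\'ajek-style $\omega$-extension to reach $\WKL_0$. Suppose $\ISig_1 \not\vdash \neg T$, so that $\ISig_1 + T$ is consistent in the language enriched with the constants $A$ and $a$. Fix a countable model $\M_0 \models \ISig_1 + T$.

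The first step invokes the classical result that every countable model of $\ISig_1$ admits a $\Sigma^0_2$-elementary cofinal extension which is a model of $\BSig_2$ (in the same language, hence here keeping $A$ and $a$). This produces $\M_1 \supseteq \M_0$ with $\M_1 \models \ISig_1 + \BSig_2$, with $M_0$ cofinal in $M_1$, and with $\Sigma^0_2$-formulas with parameters in $M_0$ preserved between the two models.

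The critical step is to check $\M_1 \models T$. Given $x \in M_1$, cofinality yields $x_0 \in M_0$ with $x \leq x_0$, and $\M_0 \models T$ provides $y_0 \in M_0$ with $\M_0 \models \forall x' < x_0 \exists y' < y_0 \forall z\, \theta(x', y', z)$. The specific syntactic shape of $T$ puts a uniform bound $y_0$ on the inner existential, so that using $\BSig_1$ (available in $\ISig_1$) one can rewrite $\exists y' < y_0 \forall z\, \theta$ equivalently as the $\Pi^0_1$ formula $\forall w \exists y' < y_0 \forall z < w\, \theta$. Hence the formula displayed above is equivalent in $\ISig_1$ to the $\Pi^0_1$ formula
$$\forall x' < x_0\, \forall w\, \exists y' < y_0\, \forall z < w\, \theta(x', y', z).$$
Since $\Pi^0_1 \subseteq \Sigma^0_2$, this is preserved upward to $\M_1$ by $\Sigma^0_2$-elementarity, and $\M_1 \models \ISig_1$ then yields $\M_1 \models \forall x' < x_0 \exists y' < y_0 \forall z\, \theta$; \emph{a fortiori} $\M_1 \models \forall x' < x \exists y' < y_0 \forall z\, \theta$, so $y_0$ witnesses the existential of $T$ at $x$. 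As $x$ was arbitrary, $\M_1 \models T$.

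Finally, expand $\M_1$ to a second-order model by taking, for instance, the $\Delta^0_1(A)$-definable sets as the second-order part, yielding a model of $\RCA_0 + \BSig_2 + T$. Applying H\'ajek's theorem that $\WKL_0 + \BSig_2$ is $\Pi^1_1$-conservative over $\RCA_0 + \BSig_2$ (proved via an $\omega$-extension with the same first-order part, which preserves the truth of $T$ since $T$ depends only on the first-order part and on $A$), one obtains a model of $\WKL_0 + \BSig_2 + T$. This contradicts $\WKL_0 + \BSig_2 \vdash \neg T$. The main obstacle is the preservation of $T$ under the cofinal extension; this is exactly where the weakly $\forall \Pi^0_4$ shape of $T$ is essential, since without the bound $y' < y$ the inner $\exists y' \forall z\, \theta$ would only be $\Sigma^0_2$ (rather than $\Pi^0_1$-equivalent) and would generally not transfer.
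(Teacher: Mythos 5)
Your proof is correct, but it takes a genuinely different route from the paper's own proof of this proposition. The paper argues internally to its largeness framework: from $\WKL_0 + \BSig_2 + T \vdash \bot$ it applies the generalized Parsons theorem (Theorem~\ref{thm:generalized-parsons-p4-bsig2-largeness}) with $\psi \equiv \bot$ to conclude that $\ISig_1$ proves exp-sparse $\omega^n$-largeness$(T)$ is not a largeness notion, then uses Proposition~\ref{prop:weak-largeness-bsig2-largeness} --- the observation that for $T$ of weakly $\forall\Pi^0_4$ shape, $\RCA_0 + T$ \emph{without} $\BSig_2$ already proves largeness$(T)$ is a largeness notion --- to get $\RCA_0 + T \vdash \bot$, and finishes by $\Pi^1_1$ conservation of $\RCA_0$ over $\ISig_1$. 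You instead argue contrapositively and model-theoretically: cofinal $\Sigma^0_2$-elementary extension (Theorem~\ref{thm:cofinal-extension}) to pass from $\ISig_1 + T$ to $\BSig_2 + T$, then H\'ajek's conservation to add $\WKL$. This is essentially the paper's own ``second proof'' of Proposition~\ref{prop:bsig2-weak-formulas}, transplanted one proposition upstream; both your argument and the paper's hinge on the same essential point, namely that the hardcoded bound $y' < y$ makes the matrix of $T$ equivalent over $\ISig_1$ (via $\BSig_1$) to a $\Pi^0_1$ formula, so that $T$ transfers without genuine $\Sigma^0_2$-collection. Your route is shorter and avoids the largeness machinery entirely; the paper's route has the merit of exercising the quantitative tools developed in Sections~\ref{sect:largeness-t}--\ref{sect:framework-pi04} and of yielding the intermediate combinatorial fact about $\omega^n$-largeness$(T)$. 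Your handling of the set parameter $A$ (treating it as a unary predicate for the cofinal extension, then taking $\Delta^0_1(A)$-definable sets as the second-order part) is the careful way to do what the paper does implicitly.
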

\begin{proof}
Assume~$\WKL_0 + \BSig_2 \vdash \neg T$. 
Then $\WKL_0 + \BSig_2 + T \vdash \bot$.
By \Cref{thm:generalized-parsons-p4-bsig2-largeness} with~$\psi \equiv \bot$,
there is some~$n \in \omega$ such that $\ISig_1$ proves $\forall x \forall Z \finsub (x, \infty)$
$$
Z \mbox{ is } \bbomega^n\mbox{-large}(T) \mbox{ and exp-sparse } \rightarrow \bot
$$
Hence $\ISig_1$ proves that exp-sparse $\bbomega^n$-largeness$(T)$ is not a largeness notion, therefore, by \Cref{prop:weak-largeness-bsig2-largeness}, $\RCA_0 + T \vdash \bot$. By $\Pi_1^1$ conservation of $\RCA_0$ over $\ISig_1$ (see Friedman \cite{friedman1975systems}), $\ISig_1 \vdash \neg T$.

\end{proof}

\begin{proposition}\label[proposition]{prop:bsig2-weak-formulas}
$\WKL_0 + \BSig_2$ is conservative over~$\ISig_1$ for weakly $\forall \Pi^0_4$ formulas.
\end{proposition}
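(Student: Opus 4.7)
The approach is to reduce \Cref{prop:bsig2-weak-formulas} directly to \Cref{prop:conservation-t}, by matching the syntactic shape of a weakly $\forall\Pi^0_4$ formula to the negation of the $T$ handled there. This makes the proof essentially a one-line invocation once the correspondence is set up.

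Concretely, I fix an arbitrary weakly $\forall\Pi^0_4$ sentence
$$
\phi \equiv \forall A\, \forall a\, \exists x\, \forall y\, \exists x' < x\, \forall y' < y\, \exists z\, \theta(A\uh_z, a, x', y', z),
$$
and assume $\WKL_0 + \BSig_2 \vdash \phi$. Expanding the outer universal quantifiers into free variables (equivalently, adjoining fresh constants for $A$ and $a$), this is the same as $\WKL_0 + \BSig_2 \vdash \neg T$, where
$$
T \equiv \forall x\, \exists y\, \forall x' < x\, \exists y' < y\, \forall z\, \neg\theta(A\uh_z, a, x', y', z).
$$
This $T$ is exactly of the form fixed just before \Cref{prop:conservation-t}, with $\neg\theta$ playing the role of the $\Sigma^0_0$ matrix and the newly introduced constants $A, a$ appearing as parameters of $\theta$. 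Applying \Cref{prop:conservation-t} then yields $\ISig_1 \vdash \neg T$, and universally closing over the constants recovers $\ISig_1 \vdash \phi$, as desired.

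The only point worth verifying is a schematic one: \Cref{prop:conservation-t} is stated for a fixed $\theta$ with parameters, and its proof factors through the generalized Parsons theorem (\Cref{thm:generalized-parsons-p4-bsig2-largeness}), \Cref{prop:weak-largeness-bsig2-largeness}, and Friedman's $\Pi^1_1$-conservation of $\RCA_0$ over $\ISig_1$. I should confirm that each of these ingredients behaves uniformly in the parameters $A$ and $a$, so that the derivation transfers to the language expanded by fresh constants for them. This is routine, since the constants never interact with the largeness combinatorics, but it is the only real bookkeeping step and constitutes the main (small) obstacle in an otherwise immediate proof.
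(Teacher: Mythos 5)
Your proposal is correct and is essentially the paper's first proof of this proposition: both reduce to \Cref{prop:conservation-t} after matching the shape of the weakly $\forall\Pi^0_4$ formula to $\neg T$, with the constants $A,a$ already built into the framework (the paper fixes $\theta$ ``with parameters'' precisely to absorb them). The only cosmetic difference is that you argue directly (universal instantiation, then generalization on constants) while the paper phrases it contrapositively via models, and your bookkeeping concern about uniformity in $A,a$ is already handled by that parametric setup.
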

\begin{proof}[First proof of \Cref{prop:bsig2-weak-formulas}]
Fix a $\Sigma^0_0$-formula $\zeta(\Param \uh_z, \param, x, y, z)$, and let~$T(\Param, \param) \equiv \forall x \exists y \forall x' < x \exists y' < y \forall z \neg \zeta(\Param \uh_z, \param, x', y', z)$.
Suppose $\ISig_1 \not \vdash \forall \Param \forall \param \neg T(\Param, \param)$.
Then there is a model~$\M = (M, S) \models \ISig_1 \wedge T(\Param, \param)$ for some~$\Param \in S$ and $\param \in M$. Enrich the language with a constant symbol for~$\Param$ and $\param$. In particular, $\ISig_1 \not \vdash \neg T(\Param, \param)$, so by \Cref{prop:conservation-t},  $\WKL_0 + \BSig_2 \not \vdash \neg T(\Param, \param)$. It follows that there is a model $\mathcal{N} = (N, R) \models \WKL_0 + \BSig_2 \wedge T(\Param, \param)$,
hence $\mathcal{N} \models \WKL_0 + \BSig_2 \wedge \neg \forall \Param \forall \param \neg T(\Param, \param)$,
thus $\WKL_0 + \BSig_2 \not \vdash \forall \Param \forall \param \neg T(\Param, \param)$.
\end{proof}

We now give an alternative and more traditional proof of \Cref{prop:bsig2-weak-formulas} using the notion of cofinal $\Sigma^0_n$-elementary extension.

\begin{definition}
A model~$\mathcal{N}$ is a \emph{cofinal} extension of~$\M \subseteq \mathcal{N}$ (written $\M \subseteq_{\operatorname{cf}} \mathcal{N}$) if for every~$x \in \mathcal{N}$, there is some~$y \in \M$ such that $x \leq y$. We write $\M \preceq_{n, \operatorname{cf}} \mathcal{N}$ if $\mathcal{N}$ is a cofinal $\Sigma^0_n$-elementary extension of~$\M$.
\end{definition}

The following theorem first appeared in Paris~\cite{paris1980hierarchy}, and was independently discovered by Harvey Friedman.

\begin{theorem}[Paris~\cite{paris1980hierarchy} ; Friedman]\label[theorem]{thm:cofinal-extension}
Let~$n \in \omega$ and $\M \models \ISig_n$. Then there is $\mathcal{N} \succeq_{n+1, \operatorname{cf}} \M$ that satisfies $\BSig_{n+1}$.
\end{theorem}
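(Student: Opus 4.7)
The plan is to construct $\mathcal{N}$ as a bounded $\Sigma^0_{n+1}$-definable ultrapower of $\M$. Let $\mathcal{F}$ consist of those total $\Sigma^0_{n+1}$-definable functions $f : M \to M$ (with parameters in $M$) whose range is bounded in $M$, and let $\mathcal{B}$ be the Boolean algebra of $\Sigma^0_{n+1}$-definable subsets of $M$, considered modulo equality in $\M$. Choose an ultrafilter $D \subseteq \mathcal{B}$ extending the filter generated by all $\Sigma^0_{n+1}$-definable cofinal subsets of $M$. Then set $\mathcal{N} := \mathcal{F}/{\sim_D}$, with $f \sim_D g$ iff $\{x : f(x) = g(x)\} \in D$ and arithmetic interpreted pointwise; $\M$ embeds into $\mathcal{N}$ by sending $a$ to the class of the constant function $c_a$.

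Next I would verify the three required properties. Cofinality is immediate: every $[f] \in \mathcal{N}$ is the class of a function bounded by some $a \in M$, hence $[f] \leq a$ in $\mathcal{N}$. The $\Sigma^0_{n+1}$-elementarity over $\M$ amounts to a Los-style theorem for $\Sigma^0_{n+1}$-formulas with parameters in $M$: the upward direction uses constant-function witnesses, while the downward direction uses that any $\mathcal{N}$-witness $[g]$ is realized in $\M$ at any point $x$ in the associated $D$-large set. To get $\BSig_{n+1}$ in $\mathcal{N}$, given an instance $\forall i < b\,\exists y\,\varphi(i,y,\vec a)$ with $\varphi \in \Sigma^0_{n+1}$ (and $b \in M$ by cofinality), one defines the Skolem-maximum function $h(x) := \max_{i<b}\mu y\,\varphi(i,y,\vec a(x))$; this $h$ is $\Sigma^0_{n+1}$-definable and pointwise bounded in $M$, so $h \in \mathcal{F}$ and its class $[h] \in \mathcal{N}$ serves as the required uniform bound.

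The main obstacle is justifying that $h$ is well-defined on a $D$-large set, and more generally that the Los-style existential step for Skolem functions always lands inside $\mathcal{F}$ (i.e., yields a \emph{bounded} function). Both issues reduce to the same point: one needs $D$ to be closed under $\Sigma^0_{n+1}$-definable $\M$-indexed intersections, which is strictly stronger than ordinary finite-intersection closure of a plain ultrafilter. The remedy is to interleave the Zorn construction of $D$ with explicit decisions for each such intersection family, using $\ISig_n$ in $\M$ to certify at each step that the intersection remains $\Sigma^0_{n+1}$-definable and consistent with the filter built so far. This construction does not conflict with $\Sigma^0_{n+1}$-elementarity because the assertion ``a uniform bound exists for this particular $\BSig_{n+1}$-instance'' is not itself $\Sigma^0_{n+1}$, so it may hold in $\mathcal{N}$ while failing in $\M$.
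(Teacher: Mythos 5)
The paper does not actually prove this theorem: it is quoted from Paris~\cite{paris1980hierarchy} and Friedman and used as a black box in the second proof of \Cref{prop:bsig2-weak-formulas}. So your proposal has to be judged against the classical arguments, and as written it has a gap that is not repairable along the lines you suggest. The fatal point is your verification of $\BSig_{n+1}$ in $\mathcal{N}$ via the Skolem-maximum function $h(x)=\max_{i<b}\mu y\,\varphi(i,y,\vec a(x))$. If $\M\not\models\BSig_{n+1}$, witnessed by some $\psi\in\Pi^0_n$ and $b\in M$ with $\M\models\forall i<b\,\exists y\,\psi(i,y)$ but with no uniform bound on the witnesses, then $h$ does not exist even at a single point $x$: its value there is exactly the bound whose nonexistence is the failing instance of collection, so no choice of ultrafilter can place $h$ in $\mathcal{F}$. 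Worse, \emph{no} extension of the kind you want can satisfy $\forall i<b\,\exists y\,\psi(i,y)$: if $\mathcal{N}\succeq_{n+1,\operatorname{cf}}\M$ satisfied it together with $\BSig_{n+1}$, the resulting bound $t\in\mathcal{N}$ would lie below some $s\in M$ by cofinality, and then the $\Sigma^0_{n+1}$ formula $\exists y<s\,\psi(i,y)$ would reflect down to $\M$ for every $i<b$ in $M$, bounding the instance in $\M$ after all. Hence the extension must instead introduce a \emph{new} element $i<b$ with $\mathcal{N}\models\forall y\,\neg\psi(i,y)$, so that the offending collection instances hold vacuously. Your argument aims at the exact opposite conclusion, so this is not a detail to be filled in but a wrong strategy for the $\BSig_{n+1}$ step; the known proofs realize carefully chosen unbounded $\Sigma^0_{n+1}$-types below old elements and iterate such one-step cofinal extensions along an $\omega$-chain with bookkeeping.

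There are also several defects in the setup. Over $\ISig_n$ the $\Sigma^0_{n+1}$-definable sets are not closed under complement, so they do not form a Boolean algebra; you would need at least their Boolean closure for $D$ to decide the sets arising in a {\L}o\'s argument. The ``filter generated by all $\Sigma^0_{n+1}$-definable cofinal subsets of $M$'' is improper (the evens and the odds are both cofinal and disjoint); you presumably want $D$ to contain all cobounded definable sets and no bounded ones. Finally, the remedy you propose for the witnessing problem --- closing $D$ under $\Sigma^0_{n+1}$-definable $M$-indexed intersections --- is unsatisfiable for any nonprincipal filter (the cobounded sets $[a,\infty)$ for $a\in M$ already have empty intersection) and is in any case beside the point: the obstruction is the nonexistence in $\M$ of bounded witness functions for $\Sigma^0_{n+1}$ formulas over a bounded domain, which is literally the failure of $\BSig_{n+1}$ in $\M$, not a closure property of $D$. (By contrast, witness functions for levels $\leq n$ are bounded because $\ISig_n\vdash\BSig_n$, which is why a restricted ultrapower can give $\Sigma^0_{n+1}$-elementarity; the difficulty is concentrated entirely at the top level and at the $\BSig_{n+1}$ verification.)
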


We are now ready to give an alternative proof of \Cref{prop:bsig2-weak-formulas}.

\begin{proof}[Second proof of \Cref{prop:bsig2-weak-formulas}]
Fix a $\Sigma^0_0$-formula $\zeta(\Param\uh_z, \param, x, y, z)$, and let
$$T(\Param, \param) \equiv \forall x \exists y \forall x' < x \exists y' < y \forall z \neg \zeta(\Param\uh_z,\param,x', y', z)$$
Suppose $\ISig_1 \not \vdash \forall \Param \forall \param \neg T(\Param, \param)$.
Then there is a model~$\M = (M, S) \models \ISig_1 \wedge T(\Param, \param)$ for some~$\Param \in S$ and $\param \in M$.
Enrich the language with a constant symbol for~$\Param$ and $\param$.
By \Cref{thm:cofinal-extension}, there exists a model $\Nc = (N, U) \succeq_{2, \operatorname{cf}} \M$ of~$\BSig_2$. 

We claim that~$\Nc \models T(\Param, \param)$. Fix some~$x \in N$. By cofinality, let~$x_1 \in M$ be such that~$x \leq x_1$. Since~$\M \models T(\Param, \param)$, there is some~$y_1 \in M$ such that $\M \models \forall x' < x_1 \exists y' < y_1 \forall z \neg \zeta(\Param\uh_z,\param,x', y', z)$. Since~$\M \models \ISig_1$, the previous formula is equivalent to a $\Pi^0_1$ formula, so since~$\Nc$ is a $\Sigma^0_2$-elementary extension of~$\M$, $\Nc \models \forall x' < x_1 \exists y' < y_1 \forall z \neg \zeta(\Param\uh_z,\param,x', y', z)$. In particular, $\Nc \models \exists y \forall x' < x_1 \exists y' < y \forall z \neg \zeta(\Param\uh_z,\param, x', y', z)$, and this for every~$x \in N$. Thus $\Nc \models T(\Param,\param)$. It follows that $\BSig_2 \nvdash \forall \Param \forall \param \neg T(\Param, \param)$.

Last, By H\'ajek~\cite{hajek1993interpretability}, $\WKL_0 + \BSig_2$ is a $\Pi^1_1$ conservative extension of~$\BSig_2$, so $\WKL_0 + \BSig_2 \not \vdash \forall \Param \forall \param \neg T(\Param, \param)$.
\end{proof}

\begin{corollary}
    $\WKL_0 + \RT_2^2$ is weakly $\forall \Pi^0_4$ conservative over $\ISig_1$.
\end{corollary}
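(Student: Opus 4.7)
The plan is to chain together the two main conservation results already established in the paper. Let $\varphi \equiv \forall A \forall a \exists x \forall y \exists x' < x \forall y' < y \exists z\, \theta(A \uh_z, a, x', y', z)$ be a weakly $\forall \Pi^0_4$ formula, and suppose $\WKL_0 + \RT^2_2 \vdash \varphi$. Since a weakly $\forall \Pi^0_4$ formula is syntactically a $\forall \Pi^0_4$ formula (the bounded quantifiers $\exists x'<x$ and $\forall y'<y$ fit inside a $\Pi^0_3$ matrix), the Main Theorem (\Cref{thm:rt22-pi04-conservation}) applies and yields $\RCA_0 + \BSig_2 \vdash \varphi$, hence \emph{a fortiori} $\WKL_0 + \BSig_2 \vdash \varphi$.

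Now invoke \Cref{prop:bsig2-weak-formulas}, which says that $\WKL_0 + \BSig_2$ is conservative over $\ISig_1$ for precisely the class of weakly $\forall \Pi^0_4$ formulas. Applied to $\varphi$, this gives $\ISig_1 \vdash \varphi$, which is the desired conclusion. Altogether this yields the chain
$$\WKL_0 + \RT^2_2 \;\vdash\; \varphi \quad\Longrightarrow\quad \RCA_0 + \BSig_2 \;\vdash\; \varphi \quad\Longrightarrow\quad \ISig_1 \;\vdash\; \varphi.$$

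There is no genuine obstacle here: the entire argument is a two-step composition of previously proven conservation theorems, together with the trivial observation that weakly $\forall \Pi^0_4$ formulas lie within the $\forall \Pi^0_4$ class to which the Main Theorem applies. The only thing worth being careful about is that the hypothesis in \Cref{prop:bsig2-weak-formulas} is stated as conservation of $\WKL_0 + \BSig_2$ (not $\RCA_0 + \BSig_2$) over $\ISig_1$, which is why we must first weaken the conclusion of \Cref{thm:rt22-pi04-conservation} by passing from $\RCA_0 + \BSig_2$ to $\WKL_0 + \BSig_2$ before applying \Cref{prop:bsig2-weak-formulas}.
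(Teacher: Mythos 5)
Your proposal is correct and matches the paper's proof exactly: the paper proves this corollary as an immediate consequence of \Cref{thm:rt22-pi04-conservation} and \Cref{prop:bsig2-weak-formulas}, which is precisely the two-step chain you describe (using the observation that weakly $\forall\Pi^0_4$ formulas are in particular $\forall\Pi^0_4$, since bounded quantifiers do not alter the level in the arithmetic hierarchy).
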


\begin{proof}
    Immediate from the combination of \Cref{thm:rt22-pi04-conservation} and \Cref{prop:bsig2-weak-formulas}.
\end{proof}

\section{Lower bounds for largeness$(T)$}\label[section]{sect:conclusion}

The proof structure of the $\Pi^0_4$ conservation theorem for Ramsey's theorem for pairs and two colors followed closely the one of Patey and Yokoyama~\cite{patey2018proof}. In particular, the generalized Parsons theorem for $\WKL_0 + \BSig_2 + T$ (\Cref{thm:generalized-parsons-p4-bsig2-largeness}) is used together with the $\Pi^1_1$ conservation of the grouping principle, to obtain a finitary version in terms of largeness$(T)$ without explicit bounds (\Cref{prop:fgp-apartness}). Ko\l odziejczyk and Yokoyama~\cite[Theorem 2.4]{kolo2020some} computed explicit bounds to the finite grouping principle in terms of largeness, by iterating a pigeonhole lemma~\cite[Lemma 2.2]{kolo2020some}. 


The remainder of this section is devoted to the proof that the new bound of \Cref{lem:largeness-rt1k} for the pigeonhole principle is optimal.

\begin{definition}
An \emph{$\bbomega^n$-decomposition} of an $\bbomega^n$-large set $X$ is a finite sequence of $\bbomega^{n-1}$-large subsets~$X_0 < \dots < X_a$ of $X \setminus \min X$ for some~$a \geq \min X-1$.
A finite set~$X$ is \emph{minimal} for $\bbomega^n$-largeness if it is $\bbomega^n$-large
and for every~$x \in X$, $X \setminus \{x\}$ is not $\bbomega^n$-large.
\end{definition}

Note that by regularity of largeness, for every~$x, n \in \NN$, there is some $y > x$ such that $[x, y]$ is minimal for $\bbomega^n$-largeness.

\begin{lemma}\label[lemma]{lem:minimal-unique}
Fix~$n > 0$. Let~$X$ be minimal for~$\bbomega^n$-largeness.
Then it admits a unique $\bbomega^n$-decomposition $X_0 < \dots < X_a$.
Moreover, $a = \min X - 1$, $\{\min X\} \cup X_0 \cup \dots X_a = X$ and for every~$i < a$, $X_i$ is minimal for $\bbomega^{n-1}$-largeness.
\end{lemma}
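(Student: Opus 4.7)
The plan is to derive each of the four conclusions ($a=\min X-1$, exhaustion, minimality of each block, and uniqueness) from the minimality of $X$ by a contradiction argument: if the property fails, I will exhibit an element whose deletion from $X$ preserves $\omega^n$-largeness, contradicting minimality. Existence of some $\omega^n$-decomposition is immediate from the definition of $\omega^n$-largeness.

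First I would show $a = \min X - 1$. Suppose toward a contradiction that $a \geq \min X$, and pick any $x \in X_a$ (which is nonempty since it is $\omega^{n-1}$-large, hence in particular nonempty because $n-1 \geq 0$; for $n=1$ nonempty is the definition, and the same argument for higher $n$ uses that $\omega^{k}$-largeness is a largeness notion). Since $a \geq \min X \geq 1$ there is at least one block $X_0$ strictly below $X_a$, so $x > \min X$ and $\min(X \setminus \{x\}) = \min X$. The sequence $X_0 < \dots < X_{a-1}$ then consists of at least $\min X$ pairwise ordered $\omega^{n-1}$-large subsets of $(X \setminus \{x\}) \setminus \min X$, which witnesses that $X \setminus \{x\}$ is $\omega^n$-large, contradicting minimality.

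Next, exhaustion and block-minimality follow by the same recipe. If some $y \in X \setminus \min X$ lies in none of the $X_i$, then removing $y$ leaves every $X_i$ untouched, and the existing decomposition witnesses that $X \setminus \{y\}$ is still $\omega^n$-large. Similarly, if some $X_i$ were not minimal $\omega^{n-1}$-large, one could remove a suitable $y \in X_i$ while keeping $X_i \setminus \{y\}$ still $\omega^{n-1}$-large; substituting $X_i \setminus \{y\}$ in the decomposition shows that $X \setminus \{y\}$ remains $\omega^n$-large, again a contradiction.

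The main step, and the one I expect to require the most care, is uniqueness. The key observation is that once we know $a = \min X - 1$ together with exhaustion, the condition $X_0 < X_1 < \dots < X_{\min X - 1}$ forces the decomposition to partition $X \setminus \min X$ into \emph{consecutive} blocks in the order inherited from $X$; in particular $X_0$ is an initial segment of $X \setminus \min X$. Since $X_0$ is minimal $\omega^{n-1}$-large, removing $\max X_0$ destroys $\omega^{n-1}$-largeness, so $X_0$ is the (unique) shortest initial segment of $X \setminus \min X$ that is $\omega^{n-1}$-large. Hence any two decompositions must share the same $X_0$. Iterating this on $X \setminus (\{\min X\} \cup X_0)$ and its successive truncations (the argument is recursive rather than an induction on $n$, as $\omega^{n-1}$-largeness of initial segments is the only input needed at each step) yields $X_i = Y_i$ for all $i$, proving uniqueness.
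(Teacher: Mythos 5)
Your proof is correct and follows essentially the same strategy as the paper's: each of the first three conclusions is established by exhibiting an element whose deletion preserves $\omega^n$-largeness (contradicting minimality), and uniqueness follows by an induction on block index using the observation that, given exhaustion and the ordering constraint, both decompositions must produce the same consecutive initial segments. The only cosmetic differences are that for $a = \min X - 1$ you remove a single element of $X_a$ where the paper removes the entire block (both valid, via closure of largeness under supersets), and you spell out more explicitly the "consecutive initial segment" reasoning that the paper leaves implicit in its uniqueness step.
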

\begin{proof}

Let $X$ be minimal for $\bbomega^n$-largeness for some~$n > 0$. Let~$X_0 < \dots < X_a$ be an $\bbomega^n$-decomposition of~$X$.

First, we claim that $a = \min X - 1$. Indeed, if $a > \min X-1$, then $X \setminus X_a$ would be $\bbomega^n$-large, contradicting minimality of $X$ for $\bbomega^n$-largeness.

Second, we claim that $X_i$ is minimal for $\bbomega^{n-1}$-largeness. Suppose not. Let~$x \in X_i$ be such that $X_i \setminus \{x\}$ is $\bbomega^{n-1}$-large. Then $X \setminus \{x\}$ would be $\bbomega^n$-large, again contradicting minimality of~$X$ for $\bbomega^n$-largeness.

Third, let us prove that $\{\min X\} \cup X_0 \cup \dots X_a = X$. Suppose there is some~$x \in X \setminus (\{\min X\} \cup X_0 \cup \dots X_a)$. Then once again, $X \setminus \{x\}$ would be $\bbomega^n$-large.

Last, assume there is another $\bbomega^n$-decomposition $Y_0 < \dots < Y_a$ of~$X$.
Then, by induction over~$i \leq a$, we prove that $X_i = Y_i$. Indeed, assuming that $X_j = Y_j$ for every~$j < i$, since $X = \{\min X\} \cup X_0 \cup \dots \cup X_a = \{\min X\} \cup Y_0 \cup \dots \cup Y_a$,
then either $X_i \subseteq Y_i$, or $Y_i \subseteq X_i$. By minimality of $X_i$ and $Y_i$ for $\bbomega^{n-1}$-largeness, $X_i = Y_i$. This completes the proof of \Cref{lem:minimal-unique}.
\end{proof}

The previous lemma justifies the following definition:

\begin{definition}
Fix a set~$X$ which is minimal for $\bbomega^n$-largeness.
The canonical $n$-block of~$X$ is $X$ itself.
For $c < n$, the canonical $c$-blocks of~$X$ are the canonical $c$-blocks of the sets $X_i$, where  $X_0 < \dots < X_{\min X-1}$ is the unique $\bbomega^n$-decomposition of~$X$.
\end{definition}

\begin{definition}
Let $X$ minimal for $\bbomega^n$-largeness for some $n \geq 1$.
The \emph{$0$-blockfree} subset of~$X$ is the set $X$ minus all the elements belonging to a canonical $0$-block of~$X$.
\end{definition}

\begin{lemma}\label[lemma]{lem:minus-0-blocks}
Let $X$ minimal for $\bbomega^n$-largeness for some $n \geq 1$. Its $0$-blockfree subset is a minimal $\bbomega^{n-1}$-large subset of $X$.   
\end{lemma}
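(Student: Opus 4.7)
The plan is to proceed by external induction on $n \geq 1$. For the base case $n = 1$, \Cref{lem:minimal-unique} gives $X = \{\min X, x_0, \ldots, x_{\min X - 1}\}$ with canonical $0$-blocks $\{x_0\}, \ldots, \{x_{\min X - 1}\}$, so the $0$-blockfree subset is the singleton $\{\min X\}$, which is clearly minimal $\omega^0$-large. For $n \geq 2$, \Cref{lem:minimal-unique} yields $X = \{\min X\} \cup X_0 \cup \cdots \cup X_{\min X - 1}$ with each $X_i$ minimal $\omega^{n-1}$-large, and the induction hypothesis tells us the $0$-blockfree subset $Y_i$ of $X_i$ is minimal $\omega^{n-2}$-large with $\min Y_i = \min X_i$ (since the canonical $0$-blocks of $X_i$ all lie inside $X_i \setminus \{\min X_i\}$). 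By the recursive definition of the canonical $0$-blocks of $X$, the $0$-blockfree subset of $X$ is $Y := \{\min X\} \cup Y_0 \cup \cdots \cup Y_{\min X - 1}$. Largeness is immediate: $\min Y = \min X$ and the $Y_i$'s witness that $Y \setminus \min Y$ is $\omega^{n-2} \cdot \min X$-large, hence $Y$ is $\omega^{n-1}$-large.

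The main obstacle is proving minimality of $Y$, for which I would first establish the following auxiliary lemma: \emph{if $V_0 < V_1 < \cdots < V_{r-1}$ are pairwise disjoint and each is minimal $\omega^k$-large, with $U := V_0 \cup \cdots \cup V_{r-1}$, then every $\omega^k \cdot r$-large subset of $U$ equals $U$.} Given such a subset $E \subseteq U$, extract disjoint minimal $\omega^k$-large witnesses $M_0 < M_1 < \cdots < M_{r-1}$ of $E$, and define $g(j)$ by $\min M_j \in V_{g(j)}$. A short case analysis shows $g$ is strictly increasing: if $g(j) = g(j+1) = i$ and $M_j \subseteq V_i$, then $M_j = V_i$ by minimality, forcing $\min M_{j+1} > \max V_i$, contradicting $\min M_{j+1} \in V_i$; otherwise $M_j$ contains some $z \in V_{i'}$ with $i' > i$, and then $\max M_j \geq \min V_{i+1} > \max V_i \geq \min M_{j+1}$, again a contradiction. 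Hence $g(j) = j$ for all $j$. A downward sweep finishes: $M_{r-1} \subseteq V_{r-1}$ (since $\min M_{r-1} \in V_{r-1}$ and $M_{r-1} \subseteq U$), so $M_{r-1} = V_{r-1}$ by minimality; inductively, $\max M_j < \min M_{j+1} = \min V_{j+1}$ combined with $\min M_j \in V_j$ forces every element of $M_j$ to lie in $V_j$, whence $M_j = V_j$. Therefore $E \supseteq \bigcup_j V_j = U$, so $E = U$.

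Granted the auxiliary lemma, minimality of $Y$ is a routine check. Removing $y = \min X$ from $Y$ changes the minimum to $\min Y_0 \geq \min X + 1$, so $\omega^{n-1}$-largeness of $Y \setminus \{y\}$ would require $(Y_0 \setminus \{\min Y_0\}) \cup Y_1 \cup \cdots \cup Y_{\min X - 1}$ to be $\omega^{n-2} \cdot \min Y_0$-large, and in particular $\omega^{n-2} \cdot \min X$-large; but this set is a proper subset of $U = Y_0 \cup \cdots \cup Y_{\min X - 1}$, contradicting the auxiliary lemma. Removing an element $y \in Y_i$ leaves $\min Y$ unchanged, so $\omega^{n-1}$-largeness would require $U \setminus \{y\}$ to be $\omega^{n-2} \cdot \min X$-large, again contradicting the auxiliary lemma. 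This completes the induction.
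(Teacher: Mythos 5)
Your proof is correct and follows the same route as the paper's: external induction on $n$, using \Cref{lem:minimal-unique} to split $X$ into the minimal $\omega^{n-1}$-large blocks $X_i$, applying the induction hypothesis to each, and reassembling the resulting sets $Y_i$ as the canonical decomposition of the $0$-blockfree subset. The one genuine difference is that the paper asserts minimality of the resulting set without argument, whereas your auxiliary lemma (every $\omega^k\cdot r$-large subset of a union of $r$ consecutive pairwise disjoint minimal $\omega^k$-large sets is the whole union) supplies a correct and complete justification of that step.
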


\begin{proof}
Proceed by induction on $n$:

Case $n = 1$: Let $X$ be $\bbomega^1$-large, then its $0$-blockfree subset is equal to $\{\min X\}$ which is minimal $\bbomega^0$-large.

Case $n \geq 2$: Assume the property to be true at rank $n - 1$, let $X_0 < \dots < X_{\min X - 1}$ be the canonical $\bbomega^{n}$-decomposition of $X$ into $\bbomega^{n-1}$-large sets. Let $X_i'$ be the $0$-blockfree subset of~$X_i$ for every $i < \min X$. By the inductive hypothesis, every $X_i'$ is a minimal $\bbomega^{n-2}$-large subset of $X_i$, therefore the $0$-blockfree subset $X'$ of~$X$ is $\bbomega^{n-1}$-large and $X_0' < \dots < X_{\min X - 1}'$ is the canonical $\bbomega^{n-1}$-decomposition of $X'$ into $\bbomega^{n-2}$-large sets.
\end{proof}

\begin{definition}
Let $X$ be minimal for $\bbomega^n$-largeness. Consider $\phi_X(x,y,c)$ a $\Sigma_0$ formula that is true if and only if $x$ and $y$ are in the same canonical $c$-block of $X$ for $c \leq n$. 

Let $\theta_X(x,y,z)$ be the following $\Sigma_0$ formula:
$$(x \in X \wedge z \in X \wedge z \geq y) \rightarrow \exists c \leq n, (\phi_X(y,z,c) \wedge \neg \phi_X(x,y,c) \wedge y > x \wedge y \in X)$$
Let $T_X$ be the formula $\forall x \exists y \forall z \theta_X(x,y,z)$. 
\end{definition}

All of these formulas are $\Sigma_0$, since there are only finitely many elements of $X$ to consider. Note that~$n$ is uniquely determined by~$X$ : it is the largest integer less or equal to~$\max X$ such that $X$ is $\bbomega^n$-large. Being $\bbomega^n$-large is a $\Sigma_0$ predicate in~$X$ and $n$, so the formula $\phi_X$ is $\Sigma_0$ uniformly in~$X$.

\begin{lemma}\label[lemma]{lem:apart-if-theta-bounds}
Let~$X$ be minimal for $\bbomega^n$-largeness for some~$n \geq 1$.
For every subsets~$A < B$ of~$X$, $A$ and $B$ are $T_X$-apart if and only if $\theta_X(\max A, \min B, \max B)$ holds.
\end{lemma}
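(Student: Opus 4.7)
The plan is to unpack both sides of the equivalence in terms of the tree structure of canonical blocks of $X$. Since $\max A, \min B, \max B$ all lie in $X$ and $\min B > \max A$ is automatic from $A < B$, the outer implication in the definition of $\theta_X$ has a true antecedent, so $\theta_X(\max A, \min B, \max B)$ reduces to the existence of some $c \leq n$ with $\phi_X(\min B, \max B, c) \wedge \neg \phi_X(\max A, \min B, c)$. A first step is to observe, via iterated application of \Cref{lem:minimal-unique} down the canonical decomposition, that every canonical $c$-block is an interval of $X$. Thus the first conjunct forces all of $B$ to sit inside a single canonical $c$-block $C$, the second forces $\max A \notin C$, and the interval property combined with $A < B$ then upgrades this to $\max A < \min C$, i.e.\ $A \cap C = \emptyset$.

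For the $(\Leftarrow)$ direction I would fix the maximal canonical block $C^*$ of $X$ which contains $B$ but excludes $\max A$, call its level $c^*$, and set $y^{*} := \min C^{*}$. The nested structure of canonical blocks together with the strict decrease of $\min$-values when passing from a canonical block to a strictly larger one (again by \Cref{lem:minimal-unique}) yields $y^{*} < \min B$. Then this single $y^{*}$ serves as the apartness witness uniformly in $x < \max A$: indeed $y^{*} \in X$, $y^{*} > x$ since $y^{*} > \max A$, and $y^{*} < \min B$; and for every $z \in X$ with $y^{*} \leq z < \max B$, the interval property places $z$ in $C^{*}$, so $\phi_X(y^{*}, z, c^{*})$ holds, while $x < y^{*} = \min C^{*}$ forces $x \notin C^{*}$, so $\neg \phi_X(x, y^{*}, c^{*})$.

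For the $(\Rightarrow)$ direction I would apply $T_X$-apartness at the specific $x$ equal to the immediate predecessor of $\max A$ in $X$, obtaining $y_0 < \min B$ in $X$ and, for each $z \in X \cap [y_0, \max B)$, a level $c_z$ witnessing $\theta_X(x, y_0, z)$. Specializing to the element of $X$ just below $\max B$ produces a canonical block $C_0$ containing both $y_0$ and a neighbour of $\max B$ in $X$, hence (using the interval property and the fact that $\max B \in X$) containing $\min B$ and $\max B$, so all of $B$. The essential structural observation is that $x$ and $\max A$ share exactly the same canonical blocks from their joining level upwards; combined with $\neg \phi_X(x, y_0, c)$ at the appropriate level, this transfers to $\neg \phi_X(\max A, \min B, c)$, producing $\theta_X(\max A, \min B, \max B)$.

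I expect the main obstacle to be this last transfer step in the $(\Rightarrow)$ direction: one needs a careful case analysis on the position of $\max A$ within the canonical block tree, and in degenerate situations where $\max A$ has no predecessor in $X$, or is itself the min of a high-level canonical block, the naive choice of $x$ breaks down and must be replaced by a lower-level element chosen inside the canonical block that $\max A$ sits atop, using \Cref{lem:minus-0-blocks} to control the structure of the underlying $0$-blockfree subset.
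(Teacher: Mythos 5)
Your backward direction is correct and essentially the paper's argument: both exhibit a single uniform apartness witness $y$ coming from a canonical $c$-block $C$ that contains $B$ but not $\max A$ (the paper takes $y=\min B$, you take $y=\min C$; either works), and both rest on the interval property of canonical blocks that you correctly extract from \Cref{lem:minimal-unique}.

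The forward direction has a genuine gap, and it is precisely the ``transfer step'' you flag. The paper's proof instantiates the apartness condition directly at $x=\max A$ and $z=\max B$ --- that is, it reads the quantifiers in the definition of $T$-apartness non-strictly --- which immediately produces some $y\in X$ with $\max A<y\le\min B$ and a level $c$ such that $\phi_X(y,\max B,c)$ and $\neg\phi_X(\max A,y,c)$; the interval property then transports the witness to $\min B$, and the proof is finished with no case analysis. Your attempt to respect the strict inequalities by instantiating at the predecessor $x_0$ of $\max A$ in $X$ (and at the predecessor of $\max B$) cannot be repaired: the canonical blocks containing $x_0$ and those containing $\max A$ need have nothing in common below the top level, so apartness information at $x_0$ says nothing about $\max A$; symmetrically, a block containing $y_0$ and the predecessor of $\max B$ need not contain $\max B$. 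In fact, under the strict reading the statement is false. Take $A=\{\min X\}$ and $B=\{\max X_0,\min X_1\}$, where $X_0<X_1$ are the first two blocks of the canonical $\omega^n$-decomposition. Every $x<\max A=\min X$ lies outside $X$, so $\theta_X(x,y,z)$ holds vacuously and $A,B$ are $T_X$-apart; yet by \Cref{lem:minimal-unique} the element $\min X_1$ belongs to no canonical block of level $<n$, so the only canonical block containing both $\max X_0$ and $\min X_1$ is $X$ itself, which also contains $\min X$ --- hence $\theta_X(\max A,\min B,\max B)$ fails. The way to a correct proof is therefore not a finer case analysis on the block tree, but the observation that the lemma (and the paper's proof of it) uses the non-strict convention for $T$-apartness, under which the forward direction is a direct unfolding.
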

\begin{proof}
If $A$ and $B$ are $T_X$-apart, then there exists some $y \leq \min B$ such that $\theta_X(\max A, y, \max B)$. Since $\max A \in X$, $\max B \in X$ and $\max B \geq y$, unfolding the definition of $\theta_X$ yield $y \in X$, $y > \max A$ and there exists some $c \leq n$ such that $\phi_X(y, \max B,c)$ and $\neg \phi_X(\max A, y,c)$ holds. Since $\min B \in [y,\max B]$ and $\min B \in X$, $\phi_X(\min B, \max B,c)$ also holds, and since $y \in [\max A, \min B]$, $\neg \phi_X(\max A, \min B,c)$ holds. So $\theta_X(\max A, \min B, \max B)$ holds. \\

Conversely, if $\theta_X(\max A, \min B, \max B)$ holds: since $\max A \in X$, $\max B \in X$ and $\max B \geq \min B$, there exists some $c \leq n$ such that $\phi_X(\min B, \max B,c)$ and $\neg \phi_X(\max A, \min B,c)$ holds. To show that $A$ and $B$ are $T_X$-apart, it is sufficient to show that $\theta_X(x,\min B, z)$ holds for every $x \leq \max A$ and $z \leq \max B$. If $x \notin X$ or $z \notin X$ or $z < \min B$ then $\theta_X(x,\min B,z)$ holds, so assume $x \in X$ and $z \in X$ and $\min B \leq z$. Since $x \leq \max A$ and $\neg \phi_X(\max A, \min B,c)$ holds, then $\neg \phi_X(x, \min B,c)$ holds and since $z \leq \max B$ and $\phi_X(\min B, \max B,c)$ holds, then $\phi_X(\min B, z,c)$ holds. Therefore, $\forall x \leq \max A \exists y \leq \min B \forall z \leq \max A ~\theta_X(x,y, z)$ holds. This completes the proof of \Cref{lem:apart-if-theta-bounds}.
\end{proof}

\begin{lemma}\label[lemma]{lem:theta-equiv-subblock}
Let $X$ be minimal for $\bbomega^n$-largeness for some $n \geq 1$. Let $Y$ be a canonical $c$-block of $X$ for some $c \leq n$. Then $\theta_X$ and $\theta_Y$ are equivalent for elements of $Y$.  
\end{lemma}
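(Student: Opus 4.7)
\textbf{Proof proposal for \Cref{lem:theta-equiv-subblock}.} The plan is first to establish the following structural fact, which is the real content of the lemma: for every $c' \leq c$, the canonical $c'$-blocks of $Y$ (viewed as a minimal $\omega^c$-large set in its own right) are exactly the canonical $c'$-blocks of $X$ that are contained in $Y$. Consequently, for every $a, b \in Y$ and every $c' \leq c$, one has $\phi_X(a,b,c') \leftrightarrow \phi_Y(a,b,c')$. I would prove this by external induction on $c - c'$: when $c' = c$, the canonical $c$-block of $Y$ is $Y$ itself, which is by hypothesis also a canonical $c$-block of $X$. The inductive step follows from the uniqueness of $\omega^{n}$-decompositions established in \Cref{lem:minimal-unique}, since the unique $\omega^{c'+1}$-decomposition of any canonical $(c'{+}1)$-block of $Y$ contained in $Y$ coincides with the unique $\omega^{c'+1}$-decomposition of the same set viewed as a canonical $(c'{+}1)$-block of $X$.

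Next, fix $x,y,z \in Y$ and compare $\theta_X(x,y,z)$ with $\theta_Y(x,y,z)$. Both hypotheses ($x \in X \wedge z \in X \wedge z \geq y$, resp.\ $x \in Y \wedge z \in Y \wedge z \geq y$) reduce to $z \geq y$ because $x, z \in Y \subseteq X$. It remains to show the existential parts match. The key observation is that any witness $c'$ for the existential in $\theta_X(x,y,z)$ must satisfy $c' < c$: indeed, if $c' \geq c$ then $x,y \in Y$ implies $x$ and $y$ lie in the same canonical $c$-block $Y$ of $X$, hence in the same canonical $c'$-block, so $\phi_X(x,y,c')$ holds, contradicting $\neg \phi_X(x,y,c')$. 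By symmetry, witnesses in $\theta_Y$ also have $c' < c$. For such $c'$, the first structural step gives $\phi_X(y,z,c') \leftrightarrow \phi_Y(y,z,c')$ and $\phi_X(x,y,c') \leftrightarrow \phi_Y(x,y,c')$, and $y \in X \leftrightarrow y \in Y$ under the assumption $y \in Y$. This yields the desired equivalence of existentials.

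I expect the main obstacle to be the bookkeeping in the structural step: one must keep straight the fact that, although $\theta_X$ quantifies $c'$ up to $n$ and $\theta_Y$ only up to $c$, the range $c' \in \{c, c+1, \dots, n\}$ simply cannot contribute any witness once $x,y$ both lie in a single canonical $c$-block; thus the apparent mismatch of quantifier bounds is harmless. Once that is verified, the rest is purely symbolic manipulation of the $\Sigma_0$ formula defining $\theta_X$.
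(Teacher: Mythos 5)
Your proposal is correct and follows essentially the same route as the paper's proof: establish that the canonical $c'$-blocks of $Y$ coincide with the canonical $c'$-blocks of $X$ contained in $Y$ (hence $\phi_X \leftrightarrow \phi_Y$ on $Y$ for $c' \leq c$), then observe that any witness of the existential must satisfy $c' < c$ because $\phi_X(x,y,c')$ holds for all $c' \geq c$ by nesting of blocks. Your version merely spells out, via \Cref{lem:minimal-unique}, the block-coincidence claim that the paper asserts in one line.
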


\begin{proof}
Let $x,y \in Y$. Then for every $d \leq c$, $\phi_X(x,y,d)$ holds if and only if $\phi_Y(x,y,d)$ holds as the canonical $d$-block of $Y$ are exactly the canonical $d$-block of $X$ contained in $Y$.

Let $x,y,z \in Y$, if $z < y$ then $\theta_X(x,y,z)$ and $\theta_Y(x,y,z)$ hold. So assume $y \leq z$, $\theta_X(x,y,z)$ holds if and only if there exists some $d \leq n$ such that $\phi_X(y,z,d)$ and $\neg \phi_X(x,y,d)$ and $y > x$. Since $\phi_X(x,y,c)$ holds (as $Y$ is a canonical $c$-block) then $d < c$. So $\theta_X(x,y,z)$ holds if and only if $\phi_Y(y,z,d)$ and $\neg \phi_Y(x,y,d)$ and $y > x$ which is equivalent to $\theta_Y(x,y,z)$. This completes the proof of \Cref{lem:theta-equiv-subblock}.
\end{proof}

Combining \Cref{lem:apart-if-theta-bounds} and \Cref{lem:theta-equiv-subblock} yield the following corollaries:

\begin{corollary}\label[corollary]{cor:largeness-equiv-subblock}
Let $X$ be minimal for $\bbomega^n$-largeness for some $n \geq 1$ and $Y$ a canonical $c$-block of $X$ for some $c \leq n$.
\begin{enumerate}
    \item Two subsets $A < B$ of $Y$ are $T_Y$-apart, if and only if they are $T_X$-apart.
    \item For $k \leq c$, the $\bbomega^k$-large$(T_Y)$ subsets of $Y$ are exactly the $\bbomega^k$-large$(T_X)$ subsets of~$Y$.
\end{enumerate}    
\end{corollary}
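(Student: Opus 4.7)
The proof plan proceeds by directly applying the two preceding lemmas, with essentially no new combinatorics required.

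For part (1), I would split into cases based on $c$. If $c = 0$, then a canonical $0$-block $Y$ is minimal for $\omega^0$-largeness and hence a singleton, so there are no two disjoint nonempty subsets $A < B$ of $Y$ and the statement holds vacuously. If $c \geq 1$, then by \Cref{lem:minimal-unique} applied inductively, $Y$ is itself minimal for $\omega^c$-largeness, so \Cref{lem:apart-if-theta-bounds} applies to both $X$ and $Y$. Given subsets $A < B$ of $Y$, the elements $\max A$, $\min B$, $\max B$ all lie in $Y$, so \Cref{lem:theta-equiv-subblock} gives $\theta_X(\max A, \min B, \max B) \leftrightarrow \theta_Y(\max A, \min B, \max B)$, and chaining the two equivalences from \Cref{lem:apart-if-theta-bounds} yields that $A, B$ are $T_X$-apart iff they are $T_Y$-apart.

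For part (2), I would proceed by external induction on $k \leq c$, unfolding \Cref{defi:largeness-rca0-bsig2-t}. The base case $k = 0$ is immediate since $\omega^0$-largeness$(T)$ just means nonempty (with $\min \geq \max\{3, a\}$), which does not depend on $T$. For the successor step, a subset $Z \subseteq Y$ is $\omega^{k+1}$-large$(T_X)$ iff $Z \setminus \min Z$ is $\omega^k \cdot \min Z$-large$(T_X)$, which by definition unfolds to the existence of $\min Z$ pairwise $T_X$-apart $\omega^k$-large$(T_X)$ subsets of $Z \setminus \min Z$. Since all such candidate subsets lie inside $Y$, part (1) translates $T_X$-apartness into $T_Y$-apartness, and the induction hypothesis translates $\omega^k$-largeness$(T_X)$ into $\omega^k$-largeness$(T_Y)$. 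The converse direction is symmetric.

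I expect no real obstacle beyond bookkeeping: the only subtle point is that the induction hypothesis must be applied inside the arbitrary subset $Z \subseteq Y$, but since every sub-block considered in the unfolding is a subset of $Y$ (and hence a subset of the canonical $c$-block to which both lemmas apply), the translation between $T_X$ and $T_Y$ goes through uniformly. No appeal to $\BSig_2$, $\WKL_0$, or the generalized Parsons theorem is needed.
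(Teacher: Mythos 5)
Your proposal is correct and matches the paper's intent exactly: the corollary is stated in the paper as following immediately by combining \Cref{lem:apart-if-theta-bounds} and \Cref{lem:theta-equiv-subblock}, which is precisely the chain of equivalences you spell out (with the induction on $k$ for part (2) being the routine unfolding the paper leaves implicit). Your handling of the degenerate case $c=0$ and the observation that $Y$ is itself minimal for $\omega^c$-largeness (so that \Cref{lem:apart-if-theta-bounds} applies to $Y$) are the right bookkeeping details.
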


\begin{lemma}\label[lemma]{lem:theta-equiv-minus-0-block}
Let $X$ be minimal for $\bbomega^n$-largeness for some $n \geq 1$. Let $X'$ be the $0$-blockfree subset of~$X$. Then $\theta_X$ and $\theta_{X'}$ are equivalent for elements of $X'$.  
\end{lemma}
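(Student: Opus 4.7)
The approach is to first lift the canonical block structure from $X$ to $X'$, and then unfold the definitions of $\theta_X$ and $\theta_{X'}$ restricted to $X'$ and match them term by term. By \Cref{lem:minus-0-blocks}, $X'$ is minimal for $\omega^{n-1}$-largeness; inspecting its proof, if $X_0 < \dots < X_{\min X - 1}$ is the canonical $\omega^n$-decomposition of $X$, then $X_0' < \dots < X_{\min X - 1}'$ is the canonical $\omega^{n-1}$-decomposition of $X'$, where $X_i'$ denotes the $0$-blockfree subset of $X_i$.

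Iterating this observation, I would prove by external induction on $n$ the following key correspondence: for every $c$ with $0 \leq c \leq n - 1$, the canonical $c$-blocks of $X'$ are exactly the sets $B \cap X'$ for $B$ a canonical $(c+1)$-block of $X$, and this intersection coincides with the $0$-blockfree subset of $B$. From this correspondence I would deduce two facts for $x, y \in X'$: first, for $1 \leq c \leq n$, $\phi_X(x, y, c) \leftrightarrow \phi_{X'}(x, y, c-1)$ (the case $c = n$ corresponding to $c - 1 = n - 1$, where both sides are trivially true on $X'$); second, $\phi_X(x, y, 0)$ is false, since $x \in X'$ means $x$ lies in no canonical $0$-block of $X$.

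Unfolding $\theta_X(x, y, z)$ and $\theta_{X'}(x, y, z)$ for $x, y, z \in X'$, both formulas are vacuously true whenever $z < y$ or $y \leq x$, so the interesting case is $x < y \leq z$. In $\theta_X$, the value $c = 0$ is ruled out by the second fact above (as $\phi_X(y, z, 0)$ fails), and $c = n$ is ruled out because $\neg \phi_X(x, y, n)$ fails. Similarly in $\theta_{X'}$, the value $c = n - 1$ is ruled out because $\neg \phi_{X'}(x, y, n-1)$ fails. What remains is an existential over $c \in \{1, \dots, n-1\}$ in $\theta_X$ and over $c' \in \{0, \dots, n-2\}$ in $\theta_{X'}$, which are matched term by term by the reindexing $c' = c - 1$ via the first fact, noting that the side conditions $y > x$ and $y \in X$ (equivalently $y \in X'$ under our hypothesis) agree on both sides. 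The main obstacle is the careful bookkeeping of this index shift together with the verification that the two boundary values of $c$ on each side trivialize for the reasons above; once the correspondence between canonical blocks is established, everything else is mechanical.
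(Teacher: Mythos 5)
Your proof is correct and follows essentially the same route as the paper's: establish that the canonical $(c-1)$-blocks of $X'$ are the canonical $c$-blocks of $X$ with their $0$-block elements removed, deduce $\phi_X(x,y,c)\leftrightarrow\phi_{X'}(x,y,c-1)$, rule out $c=0$ using $y\in X'$, and match the existentials by the index shift. One tiny slip: when $x,y,z\in X'$ with $y\le x$ and $y\le z$, the two formulas are both \emph{false} (the conjunct $y>x$ fails while the antecedent holds), not vacuously true, but they still agree, so the equivalence is unaffected.
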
 

\begin{proof}
Let $x,y \in X'$. Then for every $1 \leq c \leq n$, $\phi_X(x,y,c)$ holds if and only if $\phi_{X'}(x,y,c-1)$ holds, as the canonical $(c-1)$-blocks of $X'$ are exactly the canonical $c$-block of $X$ minus the elements belonging to a canonical $0$-block.  

Let $x,y,z \in X'$, if $z < y$ then $\theta_X(x,y,z)$ and $\theta_{X'}(x,y,z)$ hold. So assume $y \leq z$, $\theta_X(x,y,z)$ holds if and only if there exists some $c \leq n$ such that $\phi_X(y,z,c)$ and $\neg \phi_X(x,y,c)$ and $y > x$. Since $x,y \in X'$, $c$ cannot be equal to $0$. So $\theta_X(x,y,z)$ holds if and only if $y > x$ and $\phi_{X'}(y,z,c-1)$ and $\neg \phi_{X'}(x,y,c-1)$ holds, which is equivalent to $\theta_{X'}(x,y,z)$. This completes the proof of \Cref{lem:theta-equiv-minus-0-block}.
\end{proof}

Combining \Cref{lem:apart-if-theta-bounds} and \Cref{lem:theta-equiv-minus-0-block} yield the following corollaries:

\begin{corollary}\label[corollary]{cor:largeness-equiv-minus-0-block}
Let $X$ be minimal for $\bbomega^n$-largeness for some $n \geq 1$ and $X'$ be its $0$-blockfree subset.
\begin{enumerate}
    \item Two subsets $A < B$ of $X'$ are $T_{X'}$-apart, if and only if they are $T_X$-apart.
    \item For $k \leq n-1$, the $\bbomega^k$-large$(T_{X'})$ subsets of $X'$ are exactly the $\bbomega^k$-large$(T_X)$ subsets of~$X'$.
\end{enumerate}    
\end{corollary}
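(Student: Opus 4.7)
The plan is to derive both parts as direct formal consequences of Lemma \ref{lem:apart-if-theta-bounds} and Lemma \ref{lem:theta-equiv-minus-0-block}, with (2) proved by a short external induction using (1). By Lemma \ref{lem:minus-0-blocks}, $X'$ is minimal for $\omega^{n-1}$-largeness, so that both $\theta_X$ and $\theta_{X'}$ are well-defined and Lemma \ref{lem:apart-if-theta-bounds} applies to $X'$ whenever $n \geq 2$.

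For part (1), I would first handle the edge case $n = 1$ separately: then $X'$ consists of the single element $\min X$, so there are no pairs $A < B$ of subsets of $X'$ and the claim is vacuously true. For $n \geq 2$, given subsets $A < B$ of $X'$, I apply Lemma \ref{lem:apart-if-theta-bounds} twice: once to $X$, which gives that $A$ and $B$ are $T_X$-apart iff $\theta_X(\max A, \min B, \max B)$ holds, and once to $X'$, which gives that $A$ and $B$ are $T_{X'}$-apart iff $\theta_{X'}(\max A, \min B, \max B)$ holds. Since $\max A, \min B, \max B$ all lie in $X'$, Lemma \ref{lem:theta-equiv-minus-0-block} yields the equivalence of these two $\Sigma_0$ formulas on this triple, and (1) follows.

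For part (2), I would proceed by external induction on $k \leq n - 1$. The base case $k = 0$ is immediate because $\omega^0$-largeness$(T)$ reduces to the $T$-independent condition of being nonempty with $\min \geq \max(3, a)$, which is inherited since $Y \subseteq X' \subseteq X$. For the inductive step, unfolding Definition \ref{defi:largeness-rca0-bsig2-t}, a subset $Y \subseteq X'$ is $\omega^{k+1}$-large$(T)$ iff $Y \setminus \{\min Y\}$ admits $\min Y$ pairwise $T$-apart $\omega^k$-large$(T)$ subsets. By the induction hypothesis, the $\omega^k$-large$(T_X)$ and $\omega^k$-large$(T_{X'})$ subsets of $X'$ coincide, and by (1) the relations of $T_X$-apartness and $T_{X'}$-apartness coincide on subsets of $X'$. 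Combining the two gives the equivalence of $\omega^{k+1}$-largeness$(T_X)$ and $\omega^{k+1}$-largeness$(T_{X'})$ on subsets of $X'$.

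There is essentially no conceptual obstacle here: the corollary is a packaging result assembled from the two preceding lemmas. The only real care required is to observe that the inductive definition of $\omega^{k+1}$-largeness$(T)$ passes through $\omega^k \cdot c$-largeness$(T)$, so that the step simultaneously invokes both the induction hypothesis and part (1), and to note that the boundary case $n = 1$ of part (1) must be treated as vacuous because $\theta_{X'}$ would then reference canonical blocks of an $\omega^0$-large set.
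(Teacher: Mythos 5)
Your proof is correct and takes essentially the same approach as the paper, which simply presents the corollary as following directly from combining Lemmas \ref{lem:apart-if-theta-bounds} and \ref{lem:theta-equiv-minus-0-block}; you have spelled out that combination carefully, correctly handling the vacuous $n=1$ case of (1) and the induction through the $\omega^k \cdot c$-large$(T)$ clause in (2).
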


\begin{lemma}\label[lemma]{lem:canonical-is-apart}
Let~$X$ be minimal for $\bbomega^n$-largeness for some~$n \geq 1$.
Then $X$ is $\bbomega^n$-large$(T_X)$.
\end{lemma}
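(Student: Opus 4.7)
The plan is to proceed by external induction on $n \geq 1$, using the unique canonical decomposition of a minimal $\omega^n$-large set given by \Cref{lem:minimal-unique} and the transfer of largeness$(T)$ between a canonical block and the ambient set provided by \Cref{cor:largeness-equiv-subblock}.

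For the base case $n = 1$, a minimal $\omega^1$-large set $X$ consists of $\min X$ singleton canonical $0$-blocks sitting above $\min X$. Each such singleton is trivially $\omega^0$-large$(T_X)$, so the only thing to check is that any two of these singletons $\{a\} < \{b\}$ are $T_X$-apart. By \Cref{lem:apart-if-theta-bounds} this amounts to showing $\theta_X(a,b,b)$, and since $\{a\}$ and $\{b\}$ are distinct canonical $0$-blocks, $\neg \phi_X(a,b,0)$ holds, so $c = 0$ witnesses $\theta_X(a,b,b)$.

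For the inductive step $n \geq 2$, write the unique canonical $\omega^n$-decomposition $X_0 < \cdots < X_{\min X - 1}$ given by \Cref{lem:minimal-unique}, where each $X_i$ is minimal for $\omega^{n-1}$-largeness and is a canonical $(n-1)$-block of $X$. By the induction hypothesis, $X_i$ is $\omega^{n-1}$-large$(T_{X_i})$; by \Cref{cor:largeness-equiv-subblock}(2), this is the same as $\omega^{n-1}$-large$(T_X)$. It remains to verify that the $X_i$'s are pairwise $T_X$-apart, and by transitivity of $T_X$-apartness it is enough to do this for consecutive pairs $X_i < X_{i+1}$. By \Cref{lem:apart-if-theta-bounds}, we only have to exhibit $c \leq n$ witnessing $\theta_X(\max X_i, \min X_{i+1}, \max X_{i+1})$. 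Choosing $c = n-1$ works: $\min X_{i+1}$ and $\max X_{i+1}$ both lie in the canonical $(n-1)$-block $X_{i+1}$, so $\phi_X(\min X_{i+1}, \max X_{i+1}, n-1)$ holds; and $\max X_i \in X_i$, $\min X_{i+1} \in X_{i+1}$ belong to distinct canonical $(n-1)$-blocks, so $\neg \phi_X(\max X_i, \min X_{i+1}, n-1)$ holds. Hence $X \setminus \min X$ decomposes into $\min X$ pairwise $T_X$-apart $\omega^{n-1}$-large$(T_X)$ subsets, which is exactly the definition of $X$ being $\omega^n$-large$(T_X)$.

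The only potential subtlety is the change of ambient formula when passing from the induction hypothesis (stated about $T_{X_i}$) to the conclusion (stated about $T_X$); this is precisely what \Cref{cor:largeness-equiv-subblock} takes care of. Once this transfer is granted, the rest is an essentially syntactic verification using the canonical block structure, and no combinatorial trickery is needed beyond \Cref{lem:minimal-unique} and \Cref{lem:apart-if-theta-bounds}.
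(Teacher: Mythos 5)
Your proof is correct and rests on the same combinatorial observation as the paper's: any two canonical $a$-blocks of $X$ are $T_X$-apart, verified via \Cref{lem:apart-if-theta-bounds} by exhibiting the level $c=a$ as a witness in $\theta_X$. The paper establishes this apartness uniformly for all $a<n$ and then simply declares it sufficient (leaving the unwinding of the definition of $\omega^n$-largeness$(T_X)$ implicit), whereas you make that unwinding explicit as an induction on $n$, paying for it by importing \Cref{cor:largeness-equiv-subblock} to pass from $T_{X_i}$ back to $T_X$ — a slightly longer but equally valid route.
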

\begin{proof}
For this, it suffices to prove that for every~$a < n$, any two canonical $a$-blocks $Y < Z$ of $X$ are $T_X$-apart.
Let $Y < Z$ be two such blocks. Then $\phi_X(\min Z,\max Z,a)$ holds since~$\min Z$ and $\max Z$ belong to the same $a$-block, and $\phi_X(\max Y, \min Z, a)$ does not hold since $\max Y$ is not ~in $Z$. It follows that $\theta_X(\max Y, \min Z, \max Z)$ holds, so by \Cref{lem:apart-if-theta-bounds}, $Y$ and $X$ are $T_X$-apart. This completes the proof of \Cref{lem:canonical-is-apart}.
\end{proof}

\begin{lemma}\label[lemma]{lem:apart-included}
Let~$X$ be minimal for $\bbomega^n$-largeness for some~$n \geq 1$.  
Let $X_0 < \dots < X_{\min X - 1}$ be the canonical $\bbomega^n$-decomposition of $X$ into $\bbomega^{n-1}$-large sets.
If $A < B$ are two $T_X$-apart subsets of $X$, then $B \subseteq X_i$ for some $i$.
\end{lemma}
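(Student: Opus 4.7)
The plan is to chase the definitions through Lemma \ref{lem:apart-if-theta-bounds}, extract the witness $c$, and show $c$ must be strictly below $n$, so that $\min B$ and $\max B$ lie in a single canonical $(n-1)$-block.

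First, I would invoke Lemma \ref{lem:apart-if-theta-bounds}: since $A < B$ are $T_X$-apart subsets of $X$, we have $\theta_X(\max A, \min B, \max B)$. Unfolding the definition of $\theta_X$ (and using $\max A \in X$, $\max B \in X$, $\max B \geq \min B$), there is some $c \leq n$ with $\min B > \max A$, $\min B \in X$, $\phi_X(\min B, \max B, c)$, and $\neg \phi_X(\max A, \min B, c)$.

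The key observation is that $c$ cannot equal $n$. Indeed, the canonical $n$-block of $X$ is $X$ itself, so $\phi_X(u,v,n)$ holds for \emph{any} two elements $u,v \in X$; since both $\max A$ and $\min B$ belong to $X$, this would contradict $\neg \phi_X(\max A, \min B, c)$. Hence $c \leq n-1$.

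Now $\min B$ and $\max B$ lie in the same canonical $c$-block $C$ with $c \leq n-1$. Because the canonical blocks are nested (each canonical $c$-block is a subset of some canonical $(n-1)$-block, i.e. some $X_i$), we get $C \subseteq X_i$ for some $i < \min X$, and in particular $\min B, \max B \in X_i$. (A separate small check: $\min B \ne \min X$, since otherwise $\min B$ would belong to no canonical $c$-block for $c < n$, contradicting $\phi_X(\min B, \max B, c)$.) Finally, for an arbitrary $b \in B$ we have $\min B \leq b \leq \max B$ and $b \in X$; by Lemma \ref{lem:minimal-unique}, $X \setminus \{\min X\} = X_0 \sqcup \dots \sqcup X_{\min X - 1}$ with $X_0 < \dots < X_{\min X - 1}$, so the unique $X_j$ containing $b$ must be $X_i$. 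Thus $B \subseteq X_i$, as required.

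No step here looks hard: the only mild subtlety is remembering that the canonical $n$-block coincides with all of $X$, which forces $c < n$ and immediately gives containment in one of the $X_i$'s.
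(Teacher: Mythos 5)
Your proof is correct and follows essentially the same route as the paper's: apply \Cref{lem:apart-if-theta-bounds}, extract the witness $c$, rule out $c = n$ because the canonical $n$-block is all of $X$, and conclude that $\min B$ and $\max B$ (hence all of $B$, by the interval structure of the decomposition) lie in a single $X_i$. Your extra checks (that $\min B \neq \min X$ and that the $X_j$ form an ordered partition of $X \setminus \{\min X\}$) are just a more explicit rendering of the paper's final sentence.
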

\begin{proof}
Since $A$ and $B$ are $T_X$-apart, then by \Cref{lem:apart-if-theta-bounds}, $\theta_X(\max A, \min B, \max B)$ holds. Since $\max A \in X$, $\max B \in X$ and $\max B \geq \min B$, unfolding the definition of~$\theta_X$ yield that there exists some $c \leq n$ such that $\phi_X(\min B,\max B,c)$ and $\neg \phi_X(\max A,\min B,c)$ holds. Since $\phi_X(\max A,\min B,n)$ holds (as $\max A, \min B \in X$), $c$ cannot be equal to $n$. On the other hand, $\min B$ and $\max B$ are in the same canonical $c$-block of $X$, so there are in the same canonical $(n-1)$-block. So $B$ is included in $X_i$ for some $i$. This completes the proof of \Cref{lem:apart-included}.
\end{proof}


In the following proposition, recall that if~$X$ is minimal for $\bbomega^{2n-1}$-largeness, then it is $\bbomega^{2n-1}$-large($T_X$) by \Cref{lem:canonical-is-apart}. Thus, the proposition gives us an example of an $\bbomega^{2n-1}$-large$(T_X)$ set which is not $\bbomega^n$-large$(T_X, \RT^1_2)$.

\begin{proposition}\label[proposition]{prop:rt12-lower-bound}
Let~$X$ be minimal for $\bbomega^{2n-1}$-largeness for some~$n \geq 1$.  
There exists a coloring $f_X : X \to 2$ such that there is no $f_X$-homogeneous $\bbomega^n$-large$(T_X)$ subset of $X$.
\end{proposition}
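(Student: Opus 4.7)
I prove the proposition by external induction on $n \geq 1$.

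For the base case $n = 1$, $X$ minimal $\omega^1$-large has exactly $\min X + 1$ elements, and any two distinct elements of $X$ are $T_X$-apart: in the definition of $\theta_X$, the witness $c = 0$ works because distinct elements lie in distinct canonical $0$-blocks (which are singletons at this level). Consequently $Y \subseteq X$ is $\omega^1$-large$(T_X)$ iff $|Y| \geq \min Y + 1$. A balanced $2$-coloring of $X$ --- assigning color $0$ to $\min X$ together with the $\lfloor \min X / 2 \rfloor$ smallest non-minimal elements and color $1$ to the rest --- gives each color class at most $\lceil (\min X + 1)/2 \rceil$ elements, strictly less than $\min X + 1 \leq \min Y + 1$. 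Hence no $f_X$-homogeneous $\omega^1$-large$(T_X)$ subset exists.

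For the inductive step $n \geq 2$, I use the two-level canonical decomposition $X = \{\min X\} \cup X_0 \cup \dots \cup X_{\min X - 1}$ (each $X_i$ minimal $\omega^{2n-2}$-large), and $X_i = \{\min X_i\} \cup X_{i,0} \cup \dots \cup X_{i, \min X_i - 1}$ (each $X_{i,j}$ minimal $\omega^{2n-3}$-large). The induction hypothesis supplies, for each $(i,j)$, a coloring $f_{i,j} : X_{i,j} \to 2$ with no $\omega^{n-1}$-large$(T_{X_{i,j}})$ $f_{i,j}$-homogeneous subset; by \Cref{cor:largeness-equiv-subblock} this equivalently rules out $\omega^{n-1}$-large$(T_X)$ $f_{i,j}$-homogeneous subsets of $X_{i,j}$. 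I define $f_X$ by pasting: $f_X(x) = f_{i,j}(x)$ for $x \in X_{i,j}$, with specific colors (fixed below) assigned to $\min X$ and each $\min X_i$.

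Suppose for contradiction that $Y \subseteq X$ is $f_X$-homogeneous of color $c$ and $\omega^n$-large$(T_X)$. Unfolding, $Y \setminus \min Y$ contains $\min Y$ pairwise $T_X$-apart $\omega^{n-1}$-large$(T_X)$ subsets $Y_1 < \dots < Y_{\min Y}$. By \Cref{lem:apart-included}, each $Y_k$ with $k \geq 2$ lies in some canonical $(2n-2)$-block $X_{i_k}$, and by \Cref{cor:largeness-equiv-subblock} is $\omega^{n-1}$-large$(T_{X_{i_k}})$. If some $Y_k$ is contained in a single sub-block $X_{i_k, j}$, then $Y_k$ is an $\omega^{n-1}$-large$(T_{X_{i_k, j}})$ $f_{i_k, j}$-homogeneous subset, directly contradicting the induction hypothesis.

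The main obstacle is handling pieces $Y_k$ that span multiple sub-blocks $X_{i_k, j}$. For these, \Cref{lem:apart-included} applied inside $X_{i_k}$ distributes the $T_{X_{i_k}}$-apart sub-pieces of $Y_k$'s decomposition across the $X_{i_k, j}$'s, and each intersection $Y_k \cap X_{i_k, j}$ is $f_{i_k, j}$-homogeneous yet fails to be $\omega^{n-1}$-large$(T_{X_{i_k, j}})$ by the induction hypothesis. By choosing the color of each $\min X_i$ so that, for the given value of $c$, the element $\min X_{i_k}$ cannot belong to $Y_k$, the per-sub-block size bounds from the induction hypothesis control the entire size of $Y_k$. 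A careful counting argument combining these bounds with the $T_X$-apartness between consecutive sub-pieces should force $|Y_k|$ to fall short of what $\omega^{n-1}$-largeness requires, producing the desired contradiction.
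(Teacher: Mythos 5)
Your base case is correct, though you use a different (balanced) coloring than the paper's, which colors each $x$ by the parity of the smallest $c$ such that $x$ lies in a canonical $c$-block. That's fine for $n=1$, but the paper's uniform choice is what makes the inductive step compositional: it guarantees that $f_X$ restricted to any canonical sub-sub-block $X_{i,i'}$ \emph{is} $f_{X_{i,i'}}$, and that $f_X$ restricted to the $0$-blockfree subset $X'_i$ is $1 - f_{X'_i}$. By deferring the assignment of colors to $\min X$ and $\min X_i$ (``fixed below'' — but never fixed), you lose this structure and you cannot make the assignment depend on the homogeneous color $c$ of the putative $Y$, since $c$ is not known in advance.

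The genuine gap is in the inductive step. You correctly invoke \Cref{lem:apart-included} to place each $Y_k$ (for $k \geq 1$) inside a single $X_{i_k}$, and you correctly note that if some $Y_k$ sits inside a single $X_{i_k,j}$, \Cref{cor:largeness-equiv-subblock} and the induction hypothesis finish the argument. But for the pieces that span several $X_{i_k,j}$'s you fall back on ``a careful counting argument \dots{} should force $|Y_k|$ to fall short.'' This cannot work as stated: the induction hypothesis tells you that $Y_k \cap X_{i_k,j}$ is not $\omega^{n-1}$-large$(T_{X_{i_k,j}})$, but failing a largeness$(T)$ notion gives no cardinality bound at all — the obstruction is the $T$-apartness constraints, not size. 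There is no ``per-sub-block size bound'' to sum up. The paper's proof avoids this entirely by a pigeonhole step you have missed: since $\min Y$ pieces must distribute their minima among $\min X$ blocks $X_i$, and $\min Y > \min X$ in the even-color case (because $\min X$ is colored odd), some $X_i$ contains $\min Y_j$ and $\min Y_{j+1}$ for consecutive $j$; since the $X_i$ are intervals of $X$, both $Y_j$ and $Y_{j+1}$ lie inside $X_i$, and then \Cref{lem:apart-included} applied \emph{inside} $X_i$ (via \Cref{cor:largeness-equiv-subblock}) places $Y_{j+1}$ inside a single $X_{i,i'}$, where the induction hypothesis applies. So you never need to control pieces spanning several sub-blocks — you just need the existence of one well-placed piece.

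You also omit the treatment of the other color entirely. For the color whose parity matches the outermost block, the pigeonhole step fails (there is no a priori reason that $\min Y > \min X$). The paper handles this by passing to the $0$-blockfree subset $X'$ of $X$ (using \Cref{lem:minus-0-blocks} and \Cref{cor:largeness-equiv-minus-0-block}): $Y \subseteq X'$ since all elements in canonical $0$-blocks have the opposite color, $X'$ is minimal $\omega^{2n-2}$-large, and $f_X$ restricted to each $X'_i$ is $1 - f_{X'_i}$, so a descent of only one level suffices. Your sketch never mentions this asymmetry between the two colors, which is essential.
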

\begin{proof}
Let $f_X$ be the following 2-coloring of $X$: for $x \in X$, consider the smallest $c$ such that $x$ is in a $c$-canonical block of $X$, then color $x$ with the parity of $c$. \\


Proceed by induction on $n$:

\textbf{Case $n = 1$}: Let $X$ be minimal for $\bbomega$-largeness (in other words, $|X| = \min X + 1$). 
Then $f_X(\min X) = 1$ and $f_X(x) = 0$ for every other element of $X$. So there is no $f_X$-homogeneous $\bbomega$-large$(T_X)$ subset of $X$ by minimality of $X$.
\bigskip

\textbf{Case $n$}: Assume the property to be true at rank $n-1$, and let $X$ be minimal for $\bbomega^{2n-1}$-largeness. Let $X_0 < \dots < X_{\min X - 1}$ be the canonical $\bbomega^{2n-1}$-decomposition of $X$ into $\bbomega^{2n-2}$-large sets.

Assume by contradiction that there exists an $f_X$-homogeneous $\bbomega^n$-large$(T_X)$ subset $Y \subseteq X$, we can assume $Y$ to be minimal for $\bbomega^n$-largeness$(T_X)$.  Let $Y_0 < \dots < Y_{\min Y - 1}$ be the canonical $\bbomega^n$-decomposition of $Y$ into $\bbomega^{n-1}$-large sets. 

There are two cases:
\begin{itemize}
    \item $Y$ is $f_X$ homogeneous for the color $0$: Since $f_X(\min X) = 1$, $\min X \notin Y$, so $\min Y > \min X$. By the finite pigeonhole principle, there exists some $i < \min X$, such that $X_i$ contains two elements of the form $\min Y_j$ for some $j < \min Y$. Since $Y_0 < \dots < Y_{\min Y -1}$, we can assume these two elements to be of the form $\min Y_j$, $\min Y_{j+1}$, so there is some $j < \min Y - 1$ such that $Y_j \subseteq X_i$ and $\min Y_{j+1} \in X_i$. By \Cref{lem:apart-included}, since $Y_j$ and $Y_{j+1}$ are $T_X$-apart, $Y_{j+1}$ must also be included in $X_i$. \\

    Let $X_{i,0} < \dots < X_{i,\min X_i - 1}$ be the canonical $\bbomega^{2n-2}$-decomposition of $X_i$ into $\bbomega^{2n-3}$-large sets. By \Cref{cor:largeness-equiv-subblock}, since $Y_j, Y_{j+1}$ are $T_X$-apart and subsets of $X_i$, they are also $T_{X_i}$-apart. So, by \Cref{lem:apart-included} applied to $X_i$, $Y_{j+1} \subseteq X_{i,i'}$ for some $i' < \min X_i - 1$. By \Cref{cor:largeness-equiv-subblock}, since $Y_{j+1}$ is $\bbomega^{n-1}$-large$(T_X)$ then it is $\bbomega^{n-1}$-large$(T_{X_{i,i'}})$. But $f_X \uh X_{i, i'}$ is equal to $f_{X_{i, i'}}$, so $Y_{j+1}$ is an $\bbomega^{n-1}$-large$(T_{X_{i,i'}})$, $f_{X_{i, i'}}$-homogeneous subset of $X_{i,i'}$, contradicting the induction hypothesis.

    \item $Y$ is $f_X$ homogeneous for the color $1$: Let $X'$ be the $0$-blockfree subset of $X$. Then, by \Cref{lem:minus-0-blocks}, $X'$ is a minimal $\bbomega^{2n-2}$-large subset of $X$ and its canonical decomposition $X_0' < \dots < X_{\min X - 1}'$ satisfies that $X_i'$ is equal to $X_i$ minus all the elements belonging to a canonical $0$-block. Furthermore, $Y \subseteq X'$ (since all the elements belonging to a canonical $0$-block of $X$ are $0$-colored).

    If $\max Y_0 \in X_0$ then $Y_0 \subseteq X_0$, and otherwise, by the finite pigeonhole principle, there exists some $1 \leq i < \min X$ such that $X_i$ contains two elements of the form $\max Y_j$. Since $Y_0 < \dots < Y_{\min Y - 1}$, in both cases we have $Y_j \subseteq X_i$ for some $j < \min Y$ and $i < \min X$ and therefore $Y_j \subseteq X_i'$. By \Cref{cor:largeness-equiv-minus-0-block}, $Y_j$ is an $\bbomega^{n-1}$-large$(T_{X_i'})$ subset of $X_i'$. $f_X \uh X_i'$ is equal to $1 - f_{X_i'}$ (a canonical $c$-block of $X$ became a canonical $(c-1)$-block of $X'$ when we get rid of the $0$-canonical elements), so $Y_j$ is an $\bbomega^{n-1}$-large$(T_{X_i'})$, $f_{X_i'}$-homogeneous (for the color $0$) subset of $X_i'$, contradicting the induction hypothesis.

    \end{itemize}
We conclude by induction. This completes the proof of \Cref{prop:rt12-lower-bound}.
\end{proof}

\section{Open questions}\label[section]{sect:open-questions}

There exists a close connection between explicit bounds computation for largeness, and proof speedup theorems. In particular Ko\l odziejczyk, Wong and Yokoyama~\cite{kolodziejczyk2023ramsey} proved that Ramsey's theorem for pairs and two colors has at most polynomial speedup over~$\RCA_0$ for $\forall \Pi^0_3$ sentences, using the fact that $\bbomega^{300n}$-largeness is sufficient to obtain a homogeneous $\bbomega^n$-large set for every instance of~$\RT^2_2$. The exponential bounds of largeness$(T)$ for Ramsey's theorem for pairs yields the following natural questions:

\begin{question}\label[question]{ques:speedup}
Does $\RT^2_2$ admit exponential proof speedup over~$\RCA_0 + \BSig_2$?
\end{question}

\begin{question}
Is there a polynomial~$p$ such that for every~$n$, 
every~$\bbomega^{p(n)}$-large$(T)$ set is $\bbomega^n$-large$(T, \RT^2_2)$?
\end{question}

The lower bound for the pigeonhole principle uses a formula~$T_X$ which depends on the considered set~$X$. 

\begin{question}
Is there a $\Pi^0_3$ formula $T$ such that for every~$n \in \omega$, every $\bbomega^n$-large$(T,\RT^1_2)$ set is $\bbomega^{2n-1}$-large$(T)$?
\end{question}

We note that Question~\ref{ques:speedup} is essential for the original question on $\Pi^1_1$-conservation (Question~\ref{ques:rt22-pi11-conservation}). By the discussion of \cite[Section 5]{fiori2021isomorphism} and an upcoming paper by Ikari, Ko{\l}odziejczyk and Yokoyama (see \cite{Ikari-PhD}), if $\RCA_0 + \RT^2_2$ is $\Pi^0_5$-conservative over $\RCA_0 + \BSig_2$, then there exists a poly-time proof transformation between $\RCA_0 + \RT^2_2$ and $\RCA_0 + \BSig_2$ for $\Pi^1_1$-consequences.
Thus, a positive answer to Question~\ref{ques:speedup} implies that our $\Pi^0_4$-conservation is the best possible.

\begin{center}
\textbf{Acknowledgement}
\end{center}
The authors are thankful to Leszek Ko{\l}odziejczyk for insightful comments and discussions and for the anonymous referee for his careful reading and improvement suggestions.
The work of the third author is partially supported by
 JSPS KAKENHI grant numbers JP19K03601, JP21KK0045 and JP23K03193.
 
\bibliographystyle{plain}
\bibliography{biblio.bib}

\end{document}